\newcommand{\tun}{\begin{picture}(5,0)(-2,-1)
\put(0,0){\circle*{2}}
\end{picture}}
\newcommand{\tdeux}{\begin{picture}(7,7)(0,-1)
\put(3,0){\circle*{2}}
\put(3,5){\circle*{2}}
\put(3,0){\line(0,1){5}}
\end{picture}}
\newcommand{\ttroisun}{\begin{picture}(15,12)(-5,-1)
\put(3,0){\circle*{2}}
\put(6,7){\circle*{2}}
\put(0,7){\circle*{2}}
\put(-0.65,0){$\vee$}
\end{picture}}
\newcommand{\ttroisdeux}{\begin{picture}(5,15)(-2,-1)
\put(0,0){\circle*{2}}
\put(0,5){\circle*{2}}
\put(0,10){\circle*{2}}
\put(0,0){\line(0,1){5}}
\put(0,5){\line(0,1){5}}
\end{picture}}
\newcommand{\tquatreun}{\begin{picture}(15,12)(-5,-1)
\put(3,0){\circle*{2}}
\put(6,7){\circle*{2}}
\put(0,7){\circle*{2}}
\put(3,7){\circle*{2}}
\put(-0.65,0){$\vee$}
\put(3,0){\line(0,1){7}}
\end{picture}}
\newcommand{\tquatredeux}{\begin{picture}(15,18)(-5,-1)
\put(3,0){\circle*{2}}
\put(6,7){\circle*{2}}
\put(0,7){\circle*{2}}
\put(0,14){\circle*{2}}
\put(-0.65,0){$\vee$}
\put(0,7){\line(0,1){7}}
\end{picture}}
\newcommand{\tquatretrois}{\begin{picture}(15,18)(-5,-1)
\put(3,0){\circle*{2}}
\put(6,7){\circle*{2}}
\put(0,7){\circle*{2}}
\put(6,14){\circle*{2}}
\put(-0.65,0){$\vee$}
\put(6,7){\line(0,1){7}}
\end{picture}}
\newcommand{\tquatrequatre}{\begin{picture}(15,18)(-5,-1)
\put(3,5){\circle*{2}}
\put(6,12){\circle*{2}}
\put(0,12){\circle*{2}}
\put(3,0){\circle*{2}}
\put(-0.65,5){$\vee$}
\put(3,0){\line(0,1){5}}
\end{picture}}
\newcommand{\tquatrecinq}{\begin{picture}(9,19)(-2,-1)
\put(0,0){\circle*{2}}
\put(0,5){\circle*{2}}
\put(0,10){\circle*{2}}
\put(0,15){\circle*{2}}
\put(0,0){\line(0,1){5}}
\put(0,5){\line(0,1){5}}
\put(0,10){\line(0,1){5}}
\end{picture}}
\newcommand{\tdun}[1]{\begin{picture}(10,5)(-2,-1)
\put(0,0){\circle*{2}}
\put(3,-2){\tiny #1}
\end{picture}}
\newcommand{\tddeux}[2]{\begin{picture}(12,5)(0,-1)
\put(3,0){\circle*{2}}
\put(3,5){\circle*{2}}
\put(3,0){\line(0,1){5}}
\put(6,-2){\tiny #1}
\put(6,3){\tiny #2}
\end{picture}}
\newcommand{\tdtroisun}[3]{\begin{picture}(20,12)(-5,-1)
\put(3,0){\circle*{2}}
\put(6,7){\circle*{2}}
\put(0,7){\circle*{2}}
\put(-0.65,0){$\vee$}
\put(5,-2){\tiny #1}
\put(9,5){\tiny #2}
\put(-5,5){\tiny #3}
\end{picture}}
\newcommand{\tdtroisdeux}[3]{\begin{picture}(12,15)(-2,-1)
\put(0,0){\circle*{2}}
\put(0,5){\circle*{2}}
\put(0,10){\circle*{2}}
\put(0,0){\line(0,1){5}}
\put(0,5){\line(0,1){5}}
\put(3,-2){\tiny #1}
\put(3,3){\tiny #2}
\put(3,9){\tiny #3}
\end{picture}}
\newcommand{\tdquatretrois}[4]{\begin{picture}(20,20)(-5,-1)
\put(3,0){\circle*{2}}
\put(6,7){\circle*{2}}
\put(0,7){\circle*{2}}
\put(6,14){\circle*{2}}
\put(-.65,0){$\vee$}
\put(6,7){\line(0,1){7}}
\put(5,-2){\tiny #1}
\put(9,5){\tiny #2}
\put(-5,5){\tiny #4}
\put(9,12){\tiny #3}
\end{picture}}
\newcommand{\tdquatrecinq}[4]{\begin{picture}(12,19)(-2,-1)
\put(0,0){\circle*{2}}
\put(0,5){\circle*{2}}
\put(0,10){\circle*{2}}
\put(0,15){\circle*{2}}
\put(0,0){\line(0,1){5}}
\put(0,5){\line(0,1){5}}
\put(0,10){\line(0,1){5}}
\put(3,-2){\tiny #1}
\put(3,3){\tiny #2}
\put(3,9){\tiny #3}
\put(3,14){\tiny #4}
\end{picture}}
\newcommand{\hdeux}{\begin{picture}(12,8)(-3,-1)
\textcolor{red}{\put(0,0){\circle*{2}}
\put(5,0){\circle*{2}}
\put(0,0){\line(1,0){5}}}
\end{picture}}
\newcommand{\htroisun}{\begin{picture}(12,8)(-3,-1)
\put(3,0){\circle*{2}}
\put(-0.6,0.1){$\vee$}
\textcolor{red}{\put(6,7){\circle*{2}}
\put(0,7){\circle*{2}}
\put(0,7){\line(1,0){5}}}
\end{picture}}
\newcommand{\htroisdeux}{\begin{picture}(12,8)(-3,-1)
\put(0,5){\circle*{2}}
\put(0,0){\line(0,1){5}}
\textcolor{red}{\put(0,0){\circle*{2}}
\put(5,0){\circle*{2}}
\put(0,0){\line(1,0){5}}}
\end{picture}}
\newcommand{\htroistrois}{\begin{picture}(12,8)(-3,-1)
\put(5,5){\circle*{2}}
\put(5,0){\line(0,1){5}}
\textcolor{red}{\put(5,0){\circle*{2}}
\put(0,0){\circle*{2}}
\put(0,0){\line(1,0){5}}}
\end{picture}}
\newcommand{\htroisquatre}{\begin{picture}(17,0)(-3,-1)
\textcolor{red}{\put(0,0){\circle*{2}}
\put(5,0){\circle*{2}}
\put(10,0){\circle*{2}}
\put(0,0){\line(1,0){10}}}
\end{picture}}
\newcommand{\hquatreun}{\begin{picture}(12,12)(-3,-1)
\put(3,0){\circle*{2}}
\put(6,7){\circle*{2}}
\put(-0.5,0.1){$\vee$}
\put(3,0){\line(0,1){7}}
\textcolor{red}{\put(0,7){\line(1,0){2}}
\put(0,7){\circle*{2}}
\put(3,7){\circle*{2}}}
\end{picture}}
\newcommand{\hquatredeux}{\begin{picture}(12,12)(-3,-1)
\put(3,0){\circle*{2}}
\put(0,7){\circle*{2}}
\put(-0.5,0.1){$\vee$}
\put(3,0){\line(0,1){7}}
\textcolor{red}{\put(3,7){\line(1,0){2}}
\put(6,7){\circle*{2}}
\put(3,7){\circle*{2}}}
\end{picture}}
\newcommand{\hquatretrois}{\begin{picture}(12,12)(-3,-1)
\put(3,0){\circle*{2}}
\put(-0.5,0.1){$\vee$}
\put(3,0){\line(0,1){7}}
\textcolor{red}{\put(0,7){\line(1,0){5}}
\put(6,7){\circle*{2}}
\put(0,7){\circle*{2}}
\put(3,7){\circle*{2}}}
\end{picture}}
\newcommand{\hquatrequatre}{\begin{picture}(12,18)(-3,-1)
\put(3,0){\circle*{2}}
\put(0,14){\circle*{2}}
\put(-0.5,0.1){$\vee$}
\put(0,7){\line(0,1){7}}
\textcolor{red}{\put(0,7){\line(1,0){5}}
\put(6,7){\circle*{2}}
\put(0,7){\circle*{2}}}
\end{picture}}
\newcommand{\hquatrecinq}{\begin{picture}(12,18)(-3,-1)
\put(3,0){\circle*{2}}
\put(6,14){\circle*{2}}
\put(-0.5,0.1){$\vee$}
\put(6,7){\line(0,1){7}}
\textcolor{red}{\put(0,7){\line(1,0){5}}
\put(6,7){\circle*{2}}
\put(0,7){\circle*{2}}}
\end{picture}}
\newcommand{\hquatresix}{\begin{picture}(12,18)(-3,-1)
\put(3,5){\circle*{2}}
\put(3,0){\circle*{2}}
\put(-0.5,5.1){$\vee$}
\put(3,0){\line(0,1){5}}
\textcolor{red}{\put(0,12){\line(1,0){5}}
\put(6,12){\circle*{2}}
\put(0,12){\circle*{2}}}
\end{picture}}
\newcommand{\hquatresept}{\begin{picture}(18,8)(-5,-1)
\put(6,7){\circle*{2}}
\put(0,7){\circle*{2}}
\put(-0.6,0.1){$\vee$}
\textcolor{red}{\put(3,0){\line(1,0){5}}
\put(8,0){\circle*{2}}
\put(3,0){\circle*{2}}}
\end{picture}}
\newcommand{\hquatrehuit}{\begin{picture}(18,8)(-5,-1)
\put(9,7){\circle*{2}}
\put(3,7){\circle*{2}}
\put(2.4,0.1){$\vee$}
\textcolor{red}{\put(0,0){\line(1,0){5}}
\put(0,0){\circle*{2}}
\put(6,0){\circle*{2}}}
\end{picture}}
\newcommand{\hquatreneuf}{\begin{picture}(12,12)(-3,-1)
\put(0,10){\circle*{2}}
\put(0,5){\circle*{2}}
\put(0,0){\line(0,1){5}}
\put(0,5){\line(0,1){5}}
\textcolor{red}{\put(0,0){\line(1,0){5}}
\put(0,0){\circle*{2}}
\put(5,0){\circle*{2}}}
\end{picture}}
\newcommand{\hquatredix}{\begin{picture}(12,12)(-3,-1)
\put(5,5){\circle*{2}}
\put(5,10){\circle*{2}}
\put(5,0){\line(0,1){5}}
\put(5,5){\line(0,1){5}}
\textcolor{red}{\put(0,0){\line(1,0){5}}
\put(0,0){\circle*{2}}
\put(5,0){\circle*{2}}}
\end{picture}}
\newcommand{\hquatreonze}{\begin{picture}(18,8)(-5,-1)
\put(-0.6,0.1){$\vee$}
\textcolor{red}{\put(3,0){\line(1,0){5}}
\put(3,0){\circle*{2}}
\put(8,0){\circle*{2}}}
\textcolor{blue}{\put(-3.5,7){\line(1,0){5}}
\put(2.5,7){\circle*{2}}
\put(-3.5,7){\circle*{2}}}
\end{picture}}
\newcommand{\hquatredouze}{\begin{picture}(12,8)(-3,-1)
\put(2.4,0.1){$\vee$}
\textcolor{red}{\put(0,0){\line(1,0){5}}
\put(6,0){\circle*{2}}
\put(0,0){\circle*{2}}}
\textcolor{blue}{\put(-.5,7){\line(1,0){5}}
\put(5.5,7){\circle*{2}}
\put(-.5,7){\circle*{2}}}
\end{picture}}
\newcommand{\hquatretreize}{\begin{picture}(12,8)(-3,-1)
\put(0,5){\circle*{2}}
\put(5,5){\circle*{2}}
\put(5,0){\line(0,1){5}}
\put(0,0){\line(0,1){5}}
\textcolor{red}{\put(0,0){\line(1,0){5}}
\put(0,0){\circle*{2}}
\put(5,0){\circle*{2}}}
\end{picture}}
\newcommand{\hquatrequatorze}{\begin{picture}(17,8)(-3,-1)
\put(0,5){\circle*{2}}
\put(0,0){\line(0,1){5}}
\textcolor{red}{\put(0,0){\line(1,0){10}}
\put(0,0){\circle*{2}}
\put(5,0){\circle*{2}}
\put(10,0){\circle*{2}}}
\end{picture}}
\newcommand{\hquatrequinze}{\begin{picture}(17,8)(-2,-1)
\put(5,5){\circle*{2}}
\put(5,0){\line(0,1){5}}
\textcolor{red}{\put(0,0){\line(1,0){10}}
\put(0,0){\circle*{2}}
\put(5,0){\circle*{2}}
\put(10,0){\circle*{2}}}
\end{picture}}
\newcommand{\hquatreseize}{\begin{picture}(17,8)(-3,-1)
\put(10,5){\circle*{2}}
\put(10,0){\line(0,1){5}}
\textcolor{red}{\put(0,0){\line(1,0){10}}
\put(0,0){\circle*{2}}
\put(5,0){\circle*{2}}
\put(10,0){\circle*{2}}}
\end{picture}}
\newcommand{\hquatredixsept}{\begin{picture}(21,8)(-3,-1)
\textcolor{red}{\put(0,0){\circle*{2}}
\put(5,0){\circle*{2}}
\put(10,0){\circle*{2}}
\put(15,0){\circle*{2}}
\put(0,0){\line(1,0){15}}}
\end{picture}}
\newcommand{\hddeux}[2]{\begin{picture}(16,8)(-5,-1)
\textcolor{red}{\put(0,0){\circle*{2}}
\put(5,0){\circle*{2}}
\put(0,0){\line(1,0){5}}}
\put(-5,-2){\tiny #1}
\put(7,-2){\tiny #2}
\end{picture}}
\newcommand{\hdtroisun}[3]{\begin{picture}(20,12)(-5,-1)
\put(3,0){\circle*{2}}
\put(-0.6,0.1){$\vee$}
\put(5,-2){\tiny #1}
\put(9,5){\tiny #2}
\put(-5,5){\tiny #3}
\textcolor{red}{\put(6,7){\circle*{2}}
\put(0,7){\circle*{2}}
\put(0,7){\line(1,0){5}}}
\end{picture}}
\newcommand{\hdtroisdeux}[3]{\begin{picture}(16,8)(-5,-1)
\put(0,5){\circle*{2}}
\put(0,0){\line(0,1){5}}
\put(7,-2){\tiny #1}
\put(-5,-2){\tiny #2}
\put(-5,5){\tiny #3}
\textcolor{red}{\put(0,0){\circle*{2}}
\put(5,0){\circle*{2}}
\put(0,0){\line(1,0){5}}}
\end{picture}}
\newcommand{\hdtroisquatre}[3]{\begin{picture}(22,0)(-5,-1)
\put(12,-2){\tiny #1}
\put(3,2){\tiny #2}
\put(-5,-2){\tiny #3}
\textcolor{red}{\put(0,0){\circle*{2}}
\put(5,0){\circle*{2}}
\put(10,0){\circle*{2}}
\put(0,0){\line(1,0){10}}}
\end{picture}}
\newcommand{\hdquatretrois}[4]{\begin{picture}(20,12)(-5,-1)
\put(3,0){\circle*{2}}
\put(-0.5,0.1){$\vee$}
\put(3,0){\line(0,1){7}}
\put(5,-2){\tiny #1}
\put(8.5,5){\tiny #2}
\put(1,10){\tiny #3}
\put(-5,5){\tiny #4}
\textcolor{red}{\put(0,7){\line(1,0){5}}
\put(6,7){\circle*{2}}
\put(0,7){\circle*{2}}
\put(3,7){\circle*{2}}}
\end{picture}}
\newcommand{\hdquatrecinq}[4]{\begin{picture}(20,20)(-5,-1)
\put(3,0){\circle*{2}}
\put(6,14){\circle*{2}}
\put(-0.5,0.1){$\vee$}
\put(6,7){\line(0,1){7}}
\put(5,-2){\tiny #1}
\put(9,5){\tiny #2}
\put(-5,5){\tiny #4}
\put(9,12){\tiny #3}
\textcolor{red}{\put(0,7){\line(1,0){5}}
\put(6,7){\circle*{2}}
\put(0,7){\circle*{2}}}
\end{picture}}
\newcommand{\hdquatresix}[4]{\begin{picture}(20,14)(-5,-1)
\put(3,5){\circle*{2}}
\put(3,0){\circle*{2}}
\put(-0.5,5.1){$\vee$}
\put(3,0){\line(0,1){5}}
\put(6,-3){\tiny #1}
\put(6,4){\tiny #2}
\put(9,12){\tiny #3}
\put(-5,12){\tiny #4}
\textcolor{red}{\put(0,12){\line(1,0){5}}
\put(6,12){\circle*{2}}
\put(0,12){\circle*{2}}}
\end{picture}}
\newcommand{\hdquatresept}[4]{\begin{picture}(20,8)(-5,-1)
\put(3,0){\circle*{2}}
\put(6,7){\circle*{2}}
\put(0,7){\circle*{2}}
\put(8,0){\circle*{2}}
\put(-0.6,0.1){$\vee$}
\put(3,0){\line(1,0){5}}
\put(-3,-2){\tiny #1}
\put(-5,5){\tiny #2}
\put(8,5){\tiny #3}
\put(10,-2){\tiny #4}
\end{picture}}
\newcommand{\hdquatredix}[4]{\begin{picture}(17,12)(-5,-1)
\put(5,5){\circle*{2}}
\put(5,10){\circle*{2}}
\put(5,0){\line(0,1){5}}
\put(5,5){\line(0,1){5}}
\put(-5,-2){\tiny #1}
\put(7,-2){\tiny #2}
\put(7,3){\tiny #3}
\put(7,8){\tiny #4}
\textcolor{red}{\put(0,0){\line(1,0){5}}
\put(0,0){\circle*{2}}
\put(5,0){\circle*{2}}}
\end{picture}}
\newcommand{\hdquatredouze}[4]{\begin{picture}(19,8)(-5,-1)
\put(2.4,0.1){$\vee$}
\put(-5,-2){\tiny #1}
\put(7,-2){\tiny #2}
\put(-2,5){\tiny #3}
\put(10,5){\tiny #4}
\textcolor{red}{\put(0,0){\line(1,0){5}}
\put(6,0){\circle*{2}}
\put(0,0){\circle*{2}}}
\textcolor{blue}{\put(-.5,7){\line(1,0){5}}
\put(5.5,7){\circle*{2}}
\put(-.5,7){\circle*{2}}}
\end{picture}}
\newcommand{\hdquatretreize}[4]{\begin{picture}(18,8)(-5,-1)
\put(0,0){\circle*{2}}
\put(0,5){\circle*{2}}
\put(5,0){\circle*{2}}
\put(5,5){\circle*{2}}
\put(0,0){\line(1,0){5}}
\put(0,0){\line(0,1){5}}
\put(5,0){\line(0,1){5}}
\put(-5,-2){\tiny #1}
\put(-5,3){\tiny #2}
\put(7,-2){\tiny #3}
\put(7,3){\tiny #4}
\end{picture}}
\newcommand{\hdquatreseize}[4]{\begin{picture}(21,8)(-5,-1)
\put(10,5){\circle*{2}}
\put(10,0){\line(0,1){5}}
\put(-5,-2){\tiny #1}
\put(2,2){\tiny #2}
\put(12,-2){\tiny #3}
\put(12,4){\tiny #4}
\textcolor{red}{\put(0,0){\line(1,0){10}}
\put(0,0){\circle*{2}}
\put(5,0){\circle*{2}}
\put(10,0){\circle*{2}}}
\end{picture}}
\newcommand{\hdquatredixsept}[4]{\begin{picture}(24,8)(-5,-1)
\put(-5,-2){\tiny #1}
\put(2,2){\tiny #2}
\put(9,2){\tiny #3}
\put(17,-2){\tiny #4}
\textcolor{red}{\put(0,0){\circle*{2}}
\put(5,0){\circle*{2}}
\put(10,0){\circle*{2}}
\put(15,0){\circle*{2}}
\put(0,0){\line(1,0){15}}}
\end{picture}}
\title{The Hopf algebra of Fliess operators and its dual prelie algebra }
\date{}
\author{Lo{\"\i}c Foissy \\
\\
{\small{\it Laboratoire de Mathématiques Pures et Appliquées Joseph Liouville}}\\
\small{\it{Université du Littoral Côte d'Opale, Centre Universitaire de la Mi-Voix}}\\
\small{{\it 50, rue Ferdinand Buisson, CS 80699}}\\
\small{{\it 62228 Calais Cedex - France}}\\
\small{e-mail : foissy@lmpa.univ-littoral.fr}}
\newtheorem{defi}{\indent Definition}
\newtheorem{lemma}[defi]{\indent Lemma}
\newtheorem{cor}[defi]{\indent Corollary}
\newtheorem{theo}[defi]{\indent Theorem}
\newtheorem{prop}[defi]{\indent Proposition}
\newenvironment{proof}{{\bf Proof.}}{\hfill $\Box$}
\def\shuff#1#2{\mathbin{
      \hbox{\vbox{\hbox{\vrule \hskip#2 \vrule height#1 width 0pt}\hrule}\vbox{\hbox{\vrule \hskip#2 \vrule height#1 width 0pt\vrule }\hrule}}}}
\def\shuffl{{\mathchoice{\shuff{7pt}{3.5pt}}{\shuff{6pt}{3pt}}{\shuff{4pt}{2pt}}{\shuff{3pt}{1.5pt}}}}
\def\shuffle{\, \shuffl \,}
\newcommand{\K}{\mathbb{K}}
\newcommand{\A}{\K\langle\langle x_0,x_1\rangle\rangle}
\newcommand{\B}{\K\langle x_0,x_1\rangle}
\newcommand{\tcirc}{\tilde{\circ}}
\newcommand{\tdelta}{\tilde{\Delta}}
\newcommand{\T}{\mathcal{T}}
\newcommand{\D}{\mathcal{D}}
\newcommand{\PT}{\mathcal{PT}}
\newcommand{\g}{\mathfrak{g}}
\renewcommand{\PT}{\mathcal{PT}}
\newcommand{\N}{\mathbb{N}}
\newcommand{\Adm}{\mathcal{A}dm}
\begin{document}

\maketitle

ABSTRACT. We study the Hopf algebra $H$ of Fliess operators coming from Control Theory  in the one-dimensional case. 
We prove that it admits a graded, finite-dimensional, connected gradation. Dually, the vector space $\mathbb{R}\langle x_0,x_1\rangle$ is both 
a prelie algebra for the prelie product dual to the coproduct of $H$, and an associative, commutative algebra for the shuffle product. 
These two structures admit a compatibility which makes $\mathbb{R}\langle x_0,x_1\rangle$ a Com-Prelie algebra. 
We give a presentation of this object as a Com-Prelie algebra and as a prelie algebra. \\

KEYWORDS. Fliess operators; prelie algebras; Hopf algebras.\\

AMS CLASSIFICATION. 16T05, 17B60, 93B25, 05C05.

\tableofcontents

\section*{Introduction}

Right prelie algebras, or shortly prelie algebras \cite{Gerstenhaber,Chapoton}, are vector spaces with a bilinear product $\bullet$ 
satisfying the following axiom:
$$(x\bullet y)\bullet z-x\bullet (y\bullet z)=(x\bullet z)\bullet y-x\bullet (z\bullet y).$$
Consequently, the antisymmetrization of $\bullet$ is a Lie bracket. These objects are also called right-symmetric algebras or Vinberg algebra 
\cite{Matsushima,Vinberg}. If $A$ is a prelie algebra, the symmetric algebra $S(A)$ inherits a product $\star$ making it a Hopf algebra,
isomorphic to the enveloping algebra of the Lie algebra $A$ \cite{Oudom1,Oudom2}. Whenever it is possible, we can consider the dual Hopf algebra
$S(A)^*$ and its group of characters $G$, which is the exponentiation, in some sense, of the Lie algebra $A$.\\

We here consider an inverse construction, departing from a group used in Control Theory, namely the group of Fliess operators 
\cite{Ferfera,Gray,Gray2}; this group is used to study the feedback product. We limit ourselves here to the one-dimensional case. This group is the set
$\mathbb{R}\langle\langle x_0,x_1\rangle\rangle$ of noncommutative formal series in two indeterminates, with a certain product generalizing 
the composition  of formal series (definition \ref{1}).  The Hopf algebra $H$ of coordinates of this group is described in \cite{Gray}, 
where it is also proved that it is graded by the length of words;  note that this gradation is not connected and not finite-dimensional. 
We first give a way to describe the composition in the group $\mathbb{R}\langle\langle x_0,x_1\rangle\rangle$ and the coproduct of $H$ 
by induction on the length of words (lemma \ref{2} and proposition \ref{3}). We prove that $H$ admits a second gradation, which is connected; 
the dimensions of this gradation are given by the Fibonacci sequence (proposition \ref{8}). As the product of 
$\mathbb{R}\langle\langle x_0,x_1\rangle\rangle$ is left-linear, $H$ is a commutative, right-sided combinatorial Hopf algebra \cite{Loday},
so, dually, $\mathbb{R}\langle x_0,x_1\rangle$ inherits a prelie product $\bullet$, which is inductively defined in proposition \ref{11}. We prove that
the words $x_1^n$, $n\geq 0$, form a minimal subset of generators of this prelie algebra (theorem \ref{12}).\\

The prelie algebra $\mathbb{R}\langle x_0,x_1\rangle$ has also an associative, commutative product, namely the shuffle product $\shuffle$ 
\cite{Reutenauer}.  We prove that the following axiom is satisfied (proposition \ref{14}):
$$(x\shuffle y)\bullet z=(x\bullet z)\shuffle y+x\shuffle (y\bullet z).$$
So $\mathbb{R}\langle x_0,x_1\rangle$ is a Com-Prelie algebra  \cite{Mansuy} (definition \ref{15}). We give a presentation of this
Com-Prelie algebra in theorem \ref{27}. We use for this a description of free Com-Prelie algebras in terms of partitioned trees 
(definition \ref{17}), which generalizes the construction of prelie algebras in terms of rooted trees  in \cite{Chapoton}. 
We deduce a presentation of $\mathbb{R}\langle x_0,x_1\rangle$ as a prelie algebra in theorem \ref{30}.
This presentation induces a new basis of $\mathbb{R}\langle x_0,x_1\rangle$ in terms of words with letters in $\mathbb{N}^*$, 
see corollary \ref{31}. The prelie product of two elements of this basis uses a dendriform structure \cite{Eilenberg,Loday2}
on the algebra of words with letters in $\mathbb{N}^*$ (theorem \ref{34}). The study of this dendriform structure is postponed to the appendix, 
as well as the enumeration of partitioned trees; we also prove that free Com-Prelie algebras are free as prelie algebras, using Livernet's rigidity theorem 
\cite{Livernet}.\\

{\bf Aknowledgment.} The research leading these results was partially supported by the French National Research Agency under the reference
ANR-12-BS01-0017.\\

{\bf Notation.} We denote by $\K$ a commutative field of characteristic zero. All the objects (algebra, coalgebras, prelie algebras$\ldots$)
in this text will be taken over $\K$.

\section{Construction of the Hopf algebra}

\subsection{Definition of the composition}

Let us consider an alphabet of two letters $x_0$ and $x_1$. We denote by $\A$ the completion of the free algebra generated by this alphabet,
that is to say the set of noncommutative formal series in $x_0$  and $x_1$.
Note that $\A$ is an algebra for the concatenation product and for the shuffle product, which we denote by $\shuffle$. \\

{\bf Exemples.} If $a,b,c,d \in \{x_0,x_1\}$:
\begin{align*}
abc\shuffle d&=abcd+abdc+adbc+dabc,\\
ab\shuffle cd&=abcd+acbd+cabd+acdb+cadb+cdab,\\
a\shuffle bcd&=abcd+bacd+bcad+bcda.
\end{align*}

The unit for both these products is the empty word, which we denote by $\emptyset$. The algebra $\A$ is given its usual ultrametric topology.

\begin{defi}\label{1}\cite{Gray}. \begin{enumerate}
\item For any $d\in \A$, we define a continuous algebra map $\varphi_d$ from $\A$ to $End(\A)$ in the following way: for all $X \in \A$,
$$\varphi_d(x_0)(X)=x_0X,\: \varphi_d(x_1)(X)=x_1X+x_0(d\shuffle X).$$
\item We define a composition $\circ$ on $\A$ in the following way: for all $c,d \in \A$, $c \circ d=\varphi_d(c)(\emptyset)+d$.
\end{enumerate}\end{defi}

It is proved in \cite{Gray} that this composition is associative. \\

{\bf Notation}. For all $c,d\in \A$, we put $c \tcirc d=c\circ d-d=\varphi_d(c)(\emptyset)$. \\

{\bf Remark.} If $c_1,c_2,d \in \A$, $\lambda \in \K$:
$$(c_1+\lambda c_2)\tcirc d=\varphi_d(c_1+\lambda c_2)(\emptyset)=(\varphi_d(c_1)+\lambda \varphi_d(c_2))(\emptyset)
=\varphi_d(c_1)(\emptyset)+\lambda\varphi_d(c_2)(\emptyset)=c_1\tcirc d+\lambda c_2\tcirc d.$$
So the composition $\tcirc$ is linear on the left. As $\varphi_d$ is continuous, the map $c\longrightarrow c\tcirc d$ is continuous for any $d\in \A$. 
Hence, it is enough to know how to compute $c \tcirc d$ for any word $c$, which is made possible by the next lemma, using an induction on the length:

\begin{lemma}\label{2}
For any word $c$, for any $d \in \A$:
\begin{enumerate}
\item $\emptyset \tcirc d=\emptyset$.
\item $(x_0c)\tcirc d=x_0 (c \tcirc d)$.
\item $(x_1c)\tcirc d=x_1(c \tcirc d)+x_0(d \shuffle(c \tcirc d))$.
\end{enumerate}\end{lemma}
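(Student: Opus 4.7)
The plan is to derive all three identities directly from the definition $c \tcirc d = \varphi_d(c)(\emptyset)$ together with the fact that $\varphi_d : \A \to End(\A)$ is an algebra morphism, where $End(\A)$ is equipped with the composition of endomorphisms as product.

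For item (1), since $\varphi_d$ is unital, $\varphi_d(\emptyset) = \mathrm{Id}_\A$, so $\emptyset \tcirc d = \mathrm{Id}_\A(\emptyset) = \emptyset$.

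For items (2) and (3), I would use the multiplicativity of $\varphi_d$ with respect to concatenation: for any letter $x_i \in \{x_0, x_1\}$ and any word $c$,
\[
\varphi_d(x_i c) = \varphi_d(x_i) \circ \varphi_d(c).
\]
Evaluating at $\emptyset$ gives $(x_i c) \tcirc d = \varphi_d(x_i)\bigl(\varphi_d(c)(\emptyset)\bigr) = \varphi_d(x_i)(c \tcirc d)$. Substituting the defining formulas for $\varphi_d(x_0)$ and $\varphi_d(x_1)$ from Definition \ref{1} then yields
\[
(x_0 c) \tcirc d = x_0(c \tcirc d), \qquad (x_1 c) \tcirc d = x_1(c \tcirc d) + x_0\bigl(d \shuffle (c \tcirc d)\bigr),
\]
which are precisely items (2) and (3).

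There is essentially no obstacle here: the statement is a direct unfolding of the definition, and the only thing one really needs is that $\varphi_d$ is an \emph{algebra} morphism (so that $\varphi_d(x_i c) = \varphi_d(x_i) \circ \varphi_d(c)$) together with the explicit action of $\varphi_d(x_0)$ and $\varphi_d(x_1)$ on an arbitrary element $X \in \A$, applied in particular to $X = c \tcirc d$. The lemma is then the mechanism that, combined with left-linearity and continuity of $\tcirc$ in its first argument (noted just before the lemma), allows $c \tcirc d$ to be computed inductively on the length of $c$ for an arbitrary $c \in \A$.
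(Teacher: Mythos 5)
Your proof is correct and follows exactly the paper's own argument: both use that $\varphi_d$ is a unital algebra morphism to get $\varphi_d(\emptyset)=Id$ and $\varphi_d(x_ic)=\varphi_d(x_i)\circ\varphi_d(c)$, then evaluate at $\emptyset$ and substitute the defining formulas for $\varphi_d(x_0)$ and $\varphi_d(x_1)$ applied to $X=c\tcirc d$. Nothing to add.
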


\begin{proof} 1. $\emptyset \tcirc d=\varphi_d(\emptyset)(\emptyset)=Id(\emptyset)=\emptyset$. \\

2. $(x_0c)\tcirc d=\varphi_d(x_0c)(\emptyset)=\varphi_d(x_0)\circ \varphi_d(c)(\emptyset)=
\varphi_d(x_0)(c \tcirc d)=x_0(c \tcirc d)$.\\

3. $(x_1c)\tcirc d=\varphi_d(x_1c)(\emptyset)=\varphi_d(x_1)\circ \varphi_d(c)(\emptyset)=
\varphi_d(x_1)(c \tcirc d)=x_1(c \tcirc d)+x_0 (d \shuffle (c \tcirc d))$.  \end{proof}

\subsection{Dual Hopf algebra}

We here give a recursive description of the Hopf algebra of the coordinates of the group $\A$ of \cite{Gray}. \\

For any word $c$, let us consider the map $X_c \in \A^*$, such that for any $d \in \A$, $X_c(d)$ is the coefficient of $c$ in $d$.
We denote by $V$ the subspace of $A^*$ generated by these maps.
Let $H=S(V)$, or equivalently the free associative, commutative algebra generated by the $X_c$'s. The elements of $H$ are seen 
as polynomial functions on $\A$; the elements of $H\otimes H$ are seen as polynomial functions on $\A \times \A$.  Then $H$ is given a multiplicative
coproduct defined in the following way: for any word $c$, for any $f,g \in \A$, 
$$\Delta(X_c)(f, g)=X_c(f\circ g).$$
As $\circ$ is associative, $\Delta$ is coassociative, so $H$ is a bialgebra. \\

{\bf Notations.}\begin{enumerate}
\item The space of words is a commutative algebra for the shuffle product $\shuffle$. Dually, the space $V$ inherits a coassociative,
cocommutative coproduct, denoted by $\Delta_{\shuffle}$. For example, if $a,b,c \in \{x_0,x_1\}$:
\begin{align*}
\Delta_{\shuffle}(X_\emptyset)&=X_\emptyset\otimes X_\emptyset,\\
\Delta_{\shuffle}(X_a)&=X_a\otimes X_\emptyset+X_\emptyset\otimes X_a,\\
\Delta_{\shuffle}(X_{ab})&=X_{ab}\otimes X_\emptyset+X_a\otimes X_b+X_b \otimes X_a+X_\emptyset\otimes X_{ab},\\
\Delta_{\shuffle}(X_{abc})&=X_{abc}\otimes X_\emptyset+X_a\otimes X_{bc}+X_b\otimes X_{ac}+X_c\otimes X_{ab}\\
&+X_{ab}\otimes X_c+X_{ac}\otimes X_b+X_{bc}\otimes X_a+X_\emptyset\otimes X_{abc}.
\end{align*}
\item We define two linear endomorphisms $\theta_0,\theta_1$ of $V$ by $\theta_i(X_c)=X_{x_ic}$ for any word $c$.
\end{enumerate}

The following proposition allows to compute $\Delta(X_c)$ for any word $c$ by induction on the length.

\begin{prop}\label{3}
For all $x \in V$, we put $\tdelta(x)=\Delta(x)-1\otimes x$.
\begin{enumerate}
\item $\tdelta(X_\emptyset)=X_\emptyset \otimes 1$.
\item $\tdelta \circ \theta_0=(\theta_0\otimes Id) \circ \tdelta+(\theta_1 \otimes m)\circ (\tdelta \otimes Id) \circ \Delta_{\shuffle}$.
\item $\tdelta \circ \theta_1=(\theta_1\otimes Id)\circ \tdelta$.
\end{enumerate}\end{prop}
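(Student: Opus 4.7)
The plan is to translate the whole statement into identities of formal series. Since $f \circ g = f \tcirc g + g$ for all $f, g \in \A$, and since $(1 \otimes X_c)(f,g) = X_c(g)$, one gets
$$\tdelta(X_c)(f,g) = X_c(f \circ g) - X_c(g) = X_c(f \tcirc g).$$
Because elements of $H \otimes H$ are polynomial functions on $\A \times \A$, equalities in $H \otimes H$ may be checked by evaluating on all pairs $(f,g)$. Each claim therefore reduces to extracting a word coefficient from $f \tcirc g$.

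Decomposing $f = X_\emptyset(f)\,\emptyset + x_0 u + x_1 v$, where $u, v \in \A$ satisfy $X_{c'}(u) = X_{x_0 c'}(f)$ and $X_{c'}(v) = X_{x_1 c'}(f)$ for every word $c'$, and applying Lemma \ref{2} by left-linearity yields
$$f \tcirc g = X_\emptyset(f)\,\emptyset + x_0(u \tcirc g) + x_1(v \tcirc g) + x_0\bigl(g \shuffle (v \tcirc g)\bigr).$$
Claim 1 is immediate on taking the $\emptyset$-coefficient. For claims 2 and 3, I proceed by induction on the length of $c$, with the inductive hypothesis that $\tdelta(X_c) \in V \otimes H$, so that $\theta_i \otimes \mathrm{Id}$ may be applied; this inductive assumption is preserved by the very formulas to be proved.

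Claim 3 follows by extracting the $x_1 c$ coefficient: this is $X_c(v \tcirc g) = \tdelta(X_c)(v, g)$. Writing $\tdelta(X_c) = \sum_i X_{c_i} \otimes h_i$ and using $X_{c_i}(v) = X_{x_1 c_i}(f)$ identifies this with $\bigl((\theta_1 \otimes \mathrm{Id})\tdelta(X_c)\bigr)(f,g)$. Claim 2 extracts the $x_0 c$ coefficient, which splits into two contributions. The $u \tcirc g$ contribution yields $\bigl((\theta_0 \otimes \mathrm{Id})\tdelta(X_c)\bigr)(f,g)$ by the same trick. For the second contribution $X_c(g \shuffle (v \tcirc g))$, I use duality of $\shuffle$ and $\Delta_\shuffle$: $X_c(g \shuffle h) = \sum_{(c)} X_{c_{(1)}}(g) X_{c_{(2)}}(h)$; substituting $h = v \tcirc g$ and expanding $X_{c_{(2)}}(v \tcirc g) = \tdelta(X_{c_{(2)}})(v, g)$ via the induction hypothesis produces a triple sum which, after invoking cocommutativity of $\Delta_\shuffle$ to swap its two outputs, coincides with $\bigl((\theta_1 \otimes m)(\tdelta \otimes \mathrm{Id})\Delta_\shuffle(X_c)\bigr)(f,g)$.

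The main obstacle is the bookkeeping in claim 2: one must carefully track, in the triple sum, which tensor slot of $\tdelta(X_{c_{(2)}})$ carries a word-vector (hence receives $\theta_1$) and which carries an element of $H$, and invoke cocommutativity of $\Delta_\shuffle$ (coming from commutativity of $\shuffle$) to align the computation with the stated formula, in which $\tdelta$ acts on the first tensor factor of $\Delta_\shuffle(X_c)$.
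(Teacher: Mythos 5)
Your proposal is correct and follows essentially the same route as the paper: the identity $\tdelta(X_c)(f,g)=X_c(f\tcirc g)$, the decomposition $f=\lambda\emptyset+x_0u+x_1v$ combined with Lemma \ref{2}, and extraction of the $\emptyset$-, $x_0c$- and $x_1c$-coefficients, with commutativity of $\shuffle$ (dually, cocommutativity of $\Delta_{\shuffle}$) used to match the second term in point 2. The only cosmetic difference is that you obtain $\tdelta(X_c)\in V\otimes H$ as an inductive invariant, whereas the paper deduces it once and for all from the left-linearity of $\tcirc$; both are valid.
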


\begin{proof} For any word $c$, for any $f,g \in \A$:
$$\tdelta(X_c)(f,g)=\Delta(X_c)(f,g)-(1\otimes X_c)(f,g)=X_c(f\circ g)-X_c(g)=X_c(f\otimes g-g)=X_c(f\tcirc g).$$
As $\tcirc$ is linear on the left, $\tdelta(X_c) \in V \otimes H$, so  formulas in points 2 and 3 make sense.  \\

Let $f \in \A$. It can be uniquely written as $f=x_0f_0+x_1f_1+\lambda \emptyset$, with $f_0,f_1 \in \A$, $\lambda \in K$. For all $g\in \A$:
$$f\tcirc g=(x_0f_0)\tcirc g+(x_1f_1)\tcirc g+\lambda \emptyset\tcirc g=x_0 (f_0\tcirc g+g \shuffle (f_1\tcirc g))+x_1(f_1 \tcirc g)+\lambda \emptyset.$$

1. We obtain:
 $$\tdelta(X_\emptyset)(f,g)=X_\emptyset(x_0 (f_0\tcirc g+g \shuffle (f_1\tcirc g))+x_1(f_1 \tcirc g)+\lambda \emptyset)
=0+0+\lambda
=(X_\emptyset \otimes 1)(f,g),$$
so $\Delta(X_\emptyset)=X_\emptyset \otimes 1$.\\

2. Let $c$ be a word.
\begin{align*}
\tdelta \circ \theta_0(X_c)(f,g)&=\tdelta(X_{x_0c})(f,g)\\
&=X_{x_0c}(x_0 (f_0\tcirc g+g \shuffle (f_1\tcirc g))+x_1(f_1 \tcirc g)+\lambda \emptyset)\\
&=X_c(f_0 \tcirc g+g\shuffle (f_1\tcirc g))+0+0\\
&=X_c(f_0 \tcirc g+(f_1\tcirc g)\shuffle g)+0+0\\
&=\tdelta(X_c)(f_0,g)+(\tdelta\otimes Id)\circ \Delta_{\shuffle}(X_c)(f_1,g,g)\\
&=\tdelta(X_c)(f_0,g)+(Id \otimes m)\circ(\tdelta\otimes Id)\circ \Delta_{\shuffle}(X_c)(f_1,g)\\
&=(\theta_0\otimes Id) \circ \tdelta(X_c)(f,g)+(\theta_1\otimes Id)\circ (Id \otimes m)\circ(\tdelta\otimes Id)
\circ \Delta_{\shuffle}(X_c)(f,g),
\end{align*}
so $\tdelta \circ \theta_0(X_c)=(\theta_0\otimes Id) \circ \tdelta(X_c)+(\theta_1\otimes Id)\circ (Id \otimes m)\circ(\tdelta\otimes Id)
\circ \Delta_{\shuffle}(X_c)$.\\

3. Let $c$ be a word.
\begin{align*}
\tdelta \circ \theta_1(X_c)(f,g)&=\tdelta(X_{x_0c})(f,g)\\
&=X_{x_1c}(x_0 (f_0\tcirc g+g \shuffle (f_1\tcirc g))+x_1(f_1 \tcirc g)+\lambda \emptyset)\\
&=0+X_c(f_1 \tcirc g)+0\\
&=\tdelta(X_c)(f_1 ,g)\\
&=(\theta_1\otimes Id) \circ \tdelta(X_c)(f,g),
\end{align*}
so $\tdelta \circ \theta_1(X_c)=(\theta_1\otimes Id) \circ \tdelta(X_c)$. \end{proof}\\

{\bf Examples.}
\begin{align*}
\Delta(X_{x_0})&=X_{x_0}\otimes 1+1\otimes X_{x_0}+X_{x_1}\otimes X_\emptyset,\\
\Delta(X_{x_0^2})&=X_{x_0^2}\otimes 1+1\otimes X_{x_0^2}+
X_{x_0x_1}\otimes X_\emptyset+X_{x_1x_0}\otimes X_\emptyset+X_{x_1x_1}\otimes X_\emptyset^2+X_{x_1}
\otimes X_{x_0},\\
\Delta(X_{x_0x_1})&=X_{x_0x_1}\otimes 1+1\otimes X_{x_0x_1}+X_{x_1x_1}\otimes X_\emptyset
+X_{x_1}\otimes X_{x_1},\\
\Delta(X_{x_1x_0})&=X_{x_1x_0}\otimes 1+1\otimes X_{x_1x_0}+X_{x_1x_1}\otimes X_\emptyset.
\end{align*}

\begin{cor}\label{4}
For all $n \geq 1$, $\tdelta(X_{x_1^n})=X_{x_1^n} \otimes 1$ and  $\Delta(X_{x_1^n})=X_{x_1^n} \otimes 1
+1\otimes X_{x_1^n}$.
\end{cor}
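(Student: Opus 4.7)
The plan is a short induction on $n$, driven entirely by point 3 of Proposition \ref{3}. The key observation is the recursive identity $X_{x_1^n} = \theta_1(X_{x_1^{n-1}})$, so that the single relation $\tdelta \circ \theta_1 = (\theta_1 \otimes Id) \circ \tdelta$ lets me transport the form of $\tdelta$ from one power of $x_1$ to the next.

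For the base case $n=1$, I would compute
\[
\tdelta(X_{x_1}) = \tdelta \circ \theta_1(X_\emptyset) = (\theta_1 \otimes Id) \circ \tdelta(X_\emptyset) = (\theta_1 \otimes Id)(X_\emptyset \otimes 1) = X_{x_1} \otimes 1,
\]
using point 1 of Proposition \ref{3} for $\tdelta(X_\emptyset)$. For the inductive step, assuming $\tdelta(X_{x_1^n}) = X_{x_1^n} \otimes 1$, the same relation gives
\[
\tdelta(X_{x_1^{n+1}}) = \tdelta \circ \theta_1(X_{x_1^n}) = (\theta_1 \otimes Id)(X_{x_1^n} \otimes 1) = X_{x_1^{n+1}} \otimes 1.
\]
The full coproduct statement then follows from the definition $\tdelta(x) = \Delta(x) - 1 \otimes x$, yielding $\Delta(X_{x_1^n}) = X_{x_1^n} \otimes 1 + 1 \otimes X_{x_1^n}$.

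There is essentially no obstacle here: the crucial fact that is doing all the work is that point 3 of Proposition \ref{3} has no ``error term'' involving $\Delta_{\shuffle}$, in contrast to the $\theta_0$ case. This reflects the fact that $x_1$-letters in the words never interact with the shuffle term appearing in the definition of $\tcirc$ through the lemma \ref{2} recursion. Consequently the words $x_1^n$ are primitive in $H$ (once adjusted by the grouplike-like term $X_{x_1^n}\otimes 1$ from the non-connectedness of the original gradation), which is exactly what the corollary asserts.
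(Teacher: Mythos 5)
Your proof is correct and is precisely the ``easy induction on $n$'' that the paper invokes without writing out: the base case follows from points 1 and 3 of Proposition \ref{3}, and the inductive step from point 3 alone, exactly as you have it. Nothing further is needed.
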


\begin{proof} It comes from an easy induction on $n$. \end{proof}

\subsection{gradation}

It is proved in \cite{Gray} that the Hopf algebra $H$ is graded by the length of words, but this gradation is not connected, that is to say that 
the homogeneous component of degree $0$ is not $(0)$, as it contains $X_{\emptyset}$. 
Moreover, the homogeneous components of $H$ are not finite-dimensional, as for example $X_{\emptyset}^n X_{x_0^k}$ is homogeneous
of degree $k$ for all $n \geq 0$.  We now define another gradation on $H$, which is connected and finite-dimensional.

\begin{defi}\begin{enumerate}
\item Let $c=c_1\ldots c_n$ be a word. We put:
$$deg(c)=n+1+\sharp\left\{i\in \{1,\ldots,n\}\mid c_i=x_0\right\}.$$
\item For all $k\geq 1$, we put:
$$V_k=Vect(X_c\mid deg(x)=k).$$
This define a connected gradation of $V$, that is to say:
$$V=\bigoplus_{k\geq 1} V_k.$$
\item This gradation induces a connected gradation of the algebra $H$:
$$H=\bigoplus_{k\geq 0} H_k,\mbox{ and } H_0=\K.$$
\end{enumerate}\end{defi}

\begin{lemma}
If $c$ is a word of degree $n$, then:
$$\tdelta(X_c) \in \bigoplus_{i+j=n} V_i \otimes H_j.$$
\end{lemma}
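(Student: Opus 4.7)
The proof should go by induction on the length $n$ of the word $c$, using the recursive formulas of Proposition \ref{3}. The base case $c=\emptyset$ is immediate: $\deg(\emptyset)=1$ and $\tdelta(X_\emptyset)=X_\emptyset\otimes 1\in V_1\otimes H_0$, so the statement holds with $i=1$, $j=0$.

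Before handling the inductive step, I would record three auxiliary degree facts. First, from the definition of $\deg$: adding $x_1$ on the left of a word increases $\deg$ by $1$, while adding $x_0$ on the left increases $\deg$ by $2$. Translating via $\theta_i(X_d)=X_{x_id}$, this means $\theta_1(V_k)\subseteq V_{k+1}$ and $\theta_0(V_k)\subseteq V_{k+2}$. Second, the multiplication $m$ of $H=S(V)$ satisfies $m(H_p\otimes H_q)\subseteq H_{p+q}$ since the gradation of $H$ is the one induced from $V$. Third, the shuffle coproduct on $V$ satisfies
$$\Delta_{\shuffle}(V_k)\subseteq \bigoplus_{i_1+i_2=k+1}V_{i_1}\otimes V_{i_2}.$$
This is the only nontrivial input: if $c'=c_1\ldots c_m$, then $\Delta_{\shuffle}(X_{c'})$ is the sum of $X_{c'|_I}\otimes X_{c'|_J}$ over all splittings $I\sqcup J=\{1,\ldots,m\}$ into complementary subsets, and for each such splitting lengths and $x_0$-counts add up: $|I|+|J|=m$ and $k_I+k_J=k$, which gives $\deg(c'|_I)+\deg(c'|_J)=m+2+k=\deg(c')+1$ because each of the two factors contributes its own ``$+1$'' constant.

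For the inductive step, assume the claim for all words shorter than $c$. If $c=x_1c'$, then $\deg(c)=\deg(c')+1$, and by formula (3) of Proposition \ref{3}:
$$\tdelta(X_c)=(\theta_1\otimes Id)\tdelta(X_{c'})\in (\theta_1\otimes Id)\bigoplus_{i+j=\deg(c')}V_i\otimes H_j\subseteq \bigoplus_{i'+j=\deg(c)}V_{i'}\otimes H_j.$$
If $c=x_0c'$, then $\deg(c)=\deg(c')+2$, and by formula (2) we split $\tdelta(X_c)$ into two summands. The first summand $(\theta_0\otimes Id)\tdelta(X_{c'})$ lives in $V_{i+2}\otimes H_j$ with $i+j=\deg(c')$, giving total weight $\deg(c')+2=\deg(c)$. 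For the second summand $(\theta_1\otimes m)(\tdelta\otimes Id)\Delta_{\shuffle}(X_{c'})$, apply $\Delta_{\shuffle}$ using the third auxiliary fact to get terms in $V_{i_1}\otimes V_{i_2}$ with $i_1+i_2=\deg(c')+1$; by induction (both subwords are strictly shorter than $c$) the first tensor factor expands in $V_{p}\otimes H_{q}$ with $p+q=i_1$; finally $\theta_1\otimes m$ raises the first factor by $1$ and multiplies the last two $H$-factors together, landing in $V_{p+1}\otimes H_{q+i_2}$ with total weight $p+1+q+i_2=i_1+i_2+1=\deg(c')+2=\deg(c)$.

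The only genuine obstacle is verifying the shift-by-one relation for $\Delta_{\shuffle}$ and carrying the bookkeeping cleanly through the $x_0$ case; everything else is a routine matching of degrees on each side of the formulas of Proposition \ref{3}.
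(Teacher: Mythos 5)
Your proof is correct and follows essentially the same route as the paper's: the same three homogeneity observations ($\theta_0$ of degree $2$, $\theta_1$ of degree $1$, $\Delta_{\shuffle}$ of degree $1$ from $V$ to $V\otimes V$), followed by the same induction on the length of $c$ with the case split $c=x_0c'$ versus $c=x_1c'$ using Proposition \ref{3}. The degree bookkeeping in the $(\theta_1\otimes m)\circ(\tdelta\otimes Id)\circ\Delta_{\shuffle}$ term, including the observation that the subwords produced by $\Delta_{\shuffle}$ are strictly shorter than $c$ so the induction hypothesis applies, matches the paper's argument.
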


\begin{proof} Let us start by the following observations:
\begin{enumerate}
\item Let $c$ be a word of degree $k$. Then $x_0c$ is a word of degree $k+2$. Hence, $\theta_0$ is homogeneous of degree $2$ on $V$.
\item Let $c$ be a word of degree $k$. Then $x_1c$ is a word of degree $k+1$. Hence, $\theta_1$ is homogeneous of degree $1$ on $V$.
\item Let $c$ and $d$ be two words of respective degrees $k$ and $l$. Then any word obtained by shuffling $c$ and $d$ is of degree $k+l-1$:
its length is the sum of the length of $c$ and $d$, and the number of $x_0$ in it is the sum of the numbers of $x_0$ in $c$ and $d$.
Hence, dually, the coproduct $\Delta_{\shuffle}$ is homogeneous of degree $1$ from $V$ to $V \otimes V$.
\end{enumerate}

Let us prove the result by induction on the length $k$ of $c$. If $k=0$, then $c=\emptyset$ so $n=1$, and 
$\tdelta(X_c)=X_c \otimes 1 \in V_1 \otimes H_0$. Let us assume the result for all words of length $<k-1$. Two cases can occur.
\begin{enumerate}
\item If $c=x_0d$, then $deg(d)=n-2$. we put $\Delta_{\shuffle}(X_d)=\sum x'_i\otimes x''_i$. By the preceding third observation, 
we can assume that for all $i$, $x'_i,x''_i$ are homogeneous elements of $V$, with $deg(x'_i)+deg(x'_i)=n-2+1=n-1$. Then:
$$\tdelta(X_c)=(\theta_0\otimes Id) \circ \tdelta(X_d)+\sum_i(\theta_1 \otimes m)\circ(\tdelta(x'_i) \otimes x''_i).$$
By the induction hypothesis, $\tdelta(X_d) \in (V\otimes H)_{n-1}$. By the second observation, 
$(\theta_0\otimes Id) \circ \tdelta(X_d)\in (V\otimes H)_n$. By the induction hypothesis applied to $x'_i$, for all $i$, 
$(\tdelta(x'_i) \otimes x''_i) \in (V \otimes H\otimes V)_{n-1}$, so by the first observation,
$(\theta_1 \otimes m)\circ(\tdelta(x'_i) \otimes x''_i) \in (V \otimes H)_{n-1+1}\subseteq (V \otimes H)_n$. 
So $\Delta(X_c)\in (V \otimes H)_n$.
\item $c=x_1d$, then $deg(d)=n-1$. Moreover, $\tdelta(X_c)=(\theta_1\otimes Id)\circ \tdelta(X_d)$.
By the induction hypothesis, $\tdelta(X_d) \in (V \otimes H)_{n-1}$. By the second observation, $\tdelta(X_c) \in (V \otimes H)_n$.
\end{enumerate}
So the result holds for any word $c$.  \end{proof}

\begin{prop}
With this gradation, $H$ is a graded, connected Hopf algebra.
\end{prop}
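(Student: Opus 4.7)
The plan is to verify the three ingredients defining a graded connected Hopf algebra: connectedness, gradedness of the algebra structure, and gradedness of the coalgebra structure. The counit and antipode then come for free: in any graded connected bialgebra, the counit is forced to coincide with the projection $H\to H_0=\K$, and the antipode is automatically graded via its Takeuchi-type recursion from $\tdelta$. So I do not need to treat $\varepsilon$ and $S$ separately.

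Connectedness and the gradedness of the product are immediate from the construction. Every word $c$ satisfies $deg(c)\geq 1$, so $V=\bigoplus_{k\geq 1}V_k$ lives in strictly positive degrees, and the polynomial gradation on $H=S(V)$ assigns to a monomial $X_{c_1}\cdots X_{c_\ell}$ the degree $deg(c_1)+\cdots+deg(c_\ell)$; this gives $H=\bigoplus_{k\geq 0}H_k$ with $H_0=\K\cdot 1$, and the product of $H$ is graded by construction of $S(V)$. The unit $\K\to H_0$ is trivially graded.

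The substantive step is the gradedness of $\Delta$. Since $\Delta$ is an algebra morphism and $H\otimes H$ carries the total-degree gradation (for which its product is graded), it is enough to check gradedness on the generators $X_c$. For a word $c$ of degree $n$, write
\[
\Delta(X_c)=\tdelta(X_c)+1\otimes X_c.
\]
By the preceding lemma, $\tdelta(X_c)\in \bigoplus_{i+j=n}V_i\otimes H_j\subseteq \bigoplus_{i+j=n}H_i\otimes H_j$, and $1\otimes X_c\in H_0\otimes H_n$, so $\Delta(X_c)\in\bigoplus_{i+j=n}H_i\otimes H_j$. Multiplicativity of $\Delta$ then extends this to all of $H$, completing the verification. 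The only genuine obstacle — controlling how many degree units $\tdelta$ can split a generator into — has already been overcome in the preceding lemma by the simultaneous induction on the actions of $\theta_0$, $\theta_1$ and $\Delta_\shuffle$; nothing further is required here.
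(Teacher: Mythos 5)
Your proof is correct and follows the same route as the paper: reduce to the generators $X_c$ via multiplicativity of $\Delta$, and invoke the preceding lemma on $\tdelta(X_c)$ together with the trivial term $1\otimes X_c$ to get $\Delta(X_c)\in\bigoplus_{i+j=n}H_i\otimes H_j$. The paper's own proof is just a terser version of exactly this argument, so nothing further is needed.
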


\begin{proof}  We have to prove that for all $n \geq 0$:
$$\Delta(H_n)\subseteq \bigoplus_{i+j=n} H_i\otimes H_j.$$
This comes from the multiplicativity of $\Delta$.  \end{proof}\\

Let us now study the formal series of $V$ and $H$.

\begin{prop}\label{8}\begin{enumerate}
\item For all $k$, let us put $p_k=dim(V_k)$ and $F_V=\displaystyle \sum_{k=1}^\infty p_k X^k$. Then:
$$F_V=\frac{X}{1-X-X^2},$$
and for all $k\geq 1$:
$$p_k=\frac{1}{\sqrt{5}}\left(\left(\frac{1+\sqrt{5}}{2}\right)^k-\left(\frac{1-\sqrt{5}}{2}\right)^k\right).$$
This is the Fibonacci sequence (A000045 in \cite{Sloane}).
\item We put $F_H=\displaystyle \sum_{k=0}^\infty dim(H_k)X^k$. Then:
$$F_H=\prod_{k=1}^\infty  \frac{1}{(1-X^k)^{p_k}}.$$
\end{enumerate}\end{prop}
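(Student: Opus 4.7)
The plan is to reduce both statements to standard generating-function identities once the correct bookkeeping is set up.

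For part 1, I would classify the basis elements of $V_k$ by the number of each letter. A word $c$ containing $a$ copies of $x_0$ and $b$ copies of $x_1$ has length $a+b$ and satisfies $\deg(c) = (a+b) + 1 + a = 2a + b + 1$. There are $\binom{a+b}{a}$ such words, so
\[
p_k = \sum_{2a + b + 1 = k} \binom{a+b}{a}.
\]
Multiplying by $X^k$ and summing,
\[
F_V = X \sum_{a,b \geq 0} \binom{a+b}{a} (X^2)^a X^b = X \cdot \frac{1}{1 - X - X^2},
\]
where I use the bivariate identity $\sum_{a,b \geq 0} \binom{a+b}{a} Y^a Z^b = (1 - Y - Z)^{-1}$ with $Y = X^2$, $Z = X$. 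A quick check on small $k$ (values $1,1,2,3,5,\ldots$) confirms the Fibonacci shift.

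For the closed-form expression of $p_k$, I would factor $1 - X - X^2 = (1 - \varphi X)(1 - \bar\varphi X)$ with $\varphi = (1+\sqrt 5)/2$ and $\bar\varphi = (1-\sqrt 5)/2$, then expand in partial fractions:
\[
\frac{X}{1-X-X^2} = \frac{1}{\sqrt{5}}\left( \frac{1}{1 - \varphi X} - \frac{1}{1 - \bar\varphi X}\right).
\]
Reading off coefficients yields Binet's formula $p_k = \frac{1}{\sqrt{5}}(\varphi^k - \bar\varphi^k)$, which is the announced expression.

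For part 2, I would use that $H = S(V)$ is the free commutative algebra on $V$, and that the gradation on $H$ was defined as the algebra gradation induced by the one on $V$. Choosing a homogeneous basis $(e_i)_{i \in I_k}$ of each $V_k$ with $|I_k| = p_k$, the monomials in the $e_i$ form a basis of $H$, and the multiplicative structure of the Hilbert series gives
\[
F_H = \prod_{k \geq 1} \prod_{i \in I_k} \frac{1}{1-X^k} = \prod_{k \geq 1} \frac{1}{(1-X^k)^{p_k}},
\]
which is the desired formula. There is no real obstacle here: the only point requiring care is the identification of the induced gradation on $S(V)$ with the gradation on $H$ defined in the previous subsection, which is immediate from the multiplicativity of the degree and the fact that $H_0 = \K$.
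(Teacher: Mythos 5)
Your proof is correct and follows essentially the same route as the paper: counting words by their numbers of $x_0$'s and $x_1$'s, observing that $\deg(c)=2a+b+1$ so that $F_V$ is the substitution $X\,F(X^2,X)$ of the bivariate series $\frac{1}{1-X_0-X_1}$, and then invoking the Hilbert series of the symmetric algebra $H=S(V)$ for the second point. The partial-fraction derivation of Binet's formula is a small addition the paper leaves implicit, but the argument is the same.
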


\begin{proof} Let us consider the formal series:
$$F(X_0,X_1)=\sum_{i,j\geq 0} \sharp\{\mbox{words in $x_0, x_1$ with $i$ $x_0$ and $j$ $x_1$}\} X_0^i X_0^j.$$
Then $\displaystyle F(X_0,X_1)=\frac{1}{1-X_0-X_1}$. Moreover, by definition of the degree of a word:
$$F_V=XF(X^2,X)=\frac{X}{1-X-X^2}.$$
As $H$ is the symmetric algebra generated by $V$, its formal series is given by the second point. \end{proof}\\

{\bf Examples.} We obtain:
$$\begin{array}{c|c|c|c|c|c|c|c|c|c|c|c}
k&0&1&2&3&4&5&6&7&8&9&10\\
\hline dim(V_k)&0&1&1&2&3&5&8&13&21&34&55\\
\hline dim(H_k)&1&1&2&4&8&15&30&56&108&203&384
\end{array}$$
The third row is sequence A166861 of \cite{Sloane}. \\

{\bf Remark.} Consequently, the space $V$ inherits a bigradation:
$$V_{k,n}=Vect(X_c\mid deg(c)=k \mbox{ and }lg(c)=n).$$
If $c$ is a word of length $n$ and of degree $k$, denoting by $a$ the number of its letters equal to $x_0$ and by $b$ the number of its letters equal 
to $x_1$, then:
$$\begin{cases}
a+b&=n,\\
2a+b+1&=k,
\end{cases}$$
so $a=k-n-1$. Hence:
$$dim(V_{k,n})=\binom{n}{k-n-1},$$
and the formal series of this bigradation is:
$$\sum_{k,n\geq 0} dim(V_{k,n})X^kY^n=\frac{X}{1-XY-X^2Y}.$$
\section{Prelie structure on $\B$}

\subsection{Prelie coproduct on $V$}

As the composition $\circ$ is linear on the left, the dual coproduct satisfies $\tdelta(V)\subseteq V\otimes H$, so $H$ is a commutative right-sided 
Hopf algebra in the sense of \cite{Loday}, and $V$ inherits a right prelie coproduct: if $\pi$ is the canonical projection from $H=S(V)$ onto $V$, 
$$\delta=(\pi \otimes \pi) \circ \Delta=(Id \otimes \pi) \circ \tdelta.$$
It satisfies the right prelie coalgebra axiom:
$$(23).((\delta \otimes Id)\circ \delta-(Id \otimes \delta)\circ\delta)=0.$$
The following proposition allows to compute $\delta(X_c)$ by induction on the length of $c$.

\begin{prop}\label{9}\begin{enumerate}
\item $\delta(X_\emptyset)=0$.
\item $\delta \circ \theta_0=(\theta_0 \otimes Id)\circ \delta+(\theta_1\otimes Id)\circ \Delta_{\shuffle}$.
\item $\delta \circ \theta_1=(\theta_1 \otimes Id)\circ \delta$.
\end{enumerate}\end{prop}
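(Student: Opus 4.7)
The plan is to derive each of the three identities from the corresponding formula for $\tdelta$ in Proposition \ref{3} by applying $Id\otimes \pi$ throughout. Two elementary properties of the projection $\pi:H=S(V)\to V$ will do all the work: $\pi(1)=0$, and $\pi$ vanishes on $S^{\geq 2}(V)$, so in particular $\pi(bv)=0$ whenever $v\in V$ and $b$ lies in the augmentation ideal of $H$. Let $\epsilon:H\to \K$ denote the counit, i.e. the projection onto $H_0=\K$.

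Point 1 will be immediate: applying $Id\otimes \pi$ to $\tdelta(X_\emptyset)=X_\emptyset\otimes 1$ gives $X_\emptyset\otimes \pi(1)=0$. Point 3 will be almost as quick, since $\theta_1\otimes Id$ and $Id\otimes \pi$ act on disjoint tensor slots and hence commute; applying $Id\otimes \pi$ to $\tdelta\circ \theta_1=(\theta_1\otimes Id)\circ \tdelta$ yields the claim directly.

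The real work will be in point 2. The first summand $(\theta_0\otimes Id)\circ \tdelta$ becomes $(\theta_0\otimes Id)\circ \delta$ by the same commutation argument. For the second summand, one must compute $(Id\otimes \pi)\circ (\theta_1\otimes m)\circ (\tdelta\otimes Id)\circ \Delta_{\shuffle}$. The key observation will be the auxiliary identity $\pi\circ m(b\otimes v)=\epsilon(b)\,v$ for $b\in H$ and $v\in V$, which follows by decomposing $b=\epsilon(b)\cdot 1+(b-\epsilon(b)\cdot 1)$ and applying the two properties of $\pi$ above. Combined with the counit identity $(Id\otimes \epsilon)\circ \tdelta=Id_V$ on $V$ (itself a consequence of $(Id\otimes \epsilon)\circ \Delta=Id$ together with $\epsilon(V)=0$), this collapses the second summand: writing $\Delta_{\shuffle}(X_c)=\sum_i x'_i\otimes x''_i$ with $x'_i,x''_i\in V$, the $V\otimes H$ factor produced by $\tdelta(x'_i)$ is contracted against $x''_i$ by $\pi\circ m$ to give back $x'_i\otimes x''_i$, after which $\theta_1$ is applied on the left. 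The net effect is $(\theta_1\otimes Id)\circ \Delta_{\shuffle}$, as required.

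The main obstacle is exactly this second summand: the expression has several operators composed with $\Delta_{\shuffle}$, and one has to carefully track the placement of $\pi$, $m$, $\epsilon$, and the tensor factors. Once the auxiliary identity $\pi\circ m=\epsilon\otimes Id$ on $H\otimes V$ is isolated, the remainder is routine bookkeeping.
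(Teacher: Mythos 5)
Your proof is correct and follows essentially the same route as the paper: both derive the statement from Proposition \ref{3} by applying $Id\otimes\pi$, reducing the second summand of point 2 by observing that in $\tdelta(y)=y\otimes 1+(\mbox{terms in }V\otimes H_+)$ only the first part survives $\pi\circ m$. Your auxiliary identity $\pi\circ m=\epsilon\otimes Id$ on $H\otimes V$, combined with $(Id\otimes\epsilon)\circ\tdelta=Id_V$, is just a clean repackaging of the explicit decomposition the paper writes out.
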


\begin{proof} The first point comes from $\Delta(X_\emptyset)=X_\emptyset\otimes 1+1\otimes X_\emptyset$. Let $x \in V$. We put
$\Delta_{\shuffle}(x)=x'\otimes x'' \in V \otimes V$.  For any $y\in V$, we put $\tdelta(y)-y\otimes 1
=y^{(1)}\otimes y^{(2)} \in V\otimes H_+$. Then:
\begin{align*}
(\theta_1 \otimes m)\circ (\tdelta \otimes Id) \circ \Delta_{\shuffle}(x)
&=(\theta_1\otimes m)(x'\otimes 1\otimes x''+x'^{(1)}\otimes x'^{(2)}\otimes x'')\\
&=\theta_1(x') \otimes \underbrace{x''}_{\in V}+x'^{(1)}\otimes \underbrace{x'^{(2)} x''}_{\in Ker(\pi)}.
\end{align*}
Applying $Id \otimes \pi$, it remains:
$$(Id \otimes \pi)\circ(\theta_1 \otimes m)\circ (\tdelta \otimes Id) \circ \Delta_{\shuffle}(x)
=(\theta_1 \otimes Id)\circ \Delta_{\shuffle}(x).$$
Let $i=0$ or $1$. Then:
$$(Id \otimes \pi) \circ (\theta_i\otimes Id)\circ \tdelta=(\theta_i\otimes Id)\circ (Id \otimes \pi)\circ \tdelta
=(\theta_i\otimes Id)\circ \delta.$$
The result is induced by these remarks, combined with proposition \ref{3}. \end{proof}\\

{\bf Examples.}
\begin{align*}
\delta(X_{x_0})&=X_{x_1}\otimes X_\emptyset,\\
\delta(X_{x_0^2})&=X_{x_0x_1}\otimes X_\emptyset+X_{x_1x_0}\otimes X_\emptyset+X_{x_1}\otimes X_{x_0},\\
\delta(X_{x_0x_1})&=X_{x_1x_1}\otimes X_\emptyset+X_{x_1}\otimes X_{x_1},\\
\delta(X_{x_1x_0})&=X_{x_1x_1}\otimes X_\emptyset.
\end{align*}

\begin{prop}\label{10}
$Ker(\delta)=Vect(X_{x_1^n},n\geq 0)$.
\end{prop}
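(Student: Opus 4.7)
The plan is to prove both inclusions. The inclusion $Vect(X_{x_1^n},n\geq 0) \subseteq Ker(\delta)$ follows at once from Corollary \ref{4}: since $\tdelta(X_{x_1^n}) = X_{x_1^n} \otimes 1$ and the projection $\pi$ vanishes on $1 \in S(V)$, we get $\delta(X_{x_1^n}) = (Id \otimes \pi)(X_{x_1^n} \otimes 1) = 0$.

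For the opposite inclusion, I would argue by induction on the maximal length $L$ of words $c$ such that $X_c$ appears with nonzero coefficient in a given $x \in Ker(\delta)$; the base case $L = 0$ is clear since then $x \in \K X_\emptyset = \K X_{x_1^0}$. For the inductive step, I would use the direct sum decomposition $V = \K X_\emptyset \oplus \theta_0(V) \oplus \theta_1(V)$, which is valid because $\theta_0$, $\theta_1$ are injective and their images consist respectively of $X_c$ for words $c$ beginning with $x_0$ and with $x_1$. Writing $x = \mu X_\emptyset + \theta_0(y) + \theta_1(z)$ with $y, z \in V$ of maximal word length $\leq L-1$, Proposition \ref{9} gives
$$0 = \delta(x) = (\theta_0 \otimes Id)\bigl(\delta(y)\bigr) + (\theta_1 \otimes Id)\bigl(\Delta_{\shuffle}(y) + \delta(z)\bigr).$$
Since $\theta_0(V) \otimes V$ and $\theta_1(V) \otimes V$ are in direct sum inside $V \otimes V$ (distinguished by the first letter of the left factor) and the maps $\theta_i \otimes Id$ are injective, this splits into the two equations $\delta(y) = 0$ and $\delta(z) = -\Delta_{\shuffle}(y)$. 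The inductive hypothesis applied to $y$ then yields $y = \sum_{n \geq 0} \alpha_n X_{x_1^n}$ for some scalars $\alpha_n$.

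The key observation, on which the argument pivots, is that the left tensor factor of every element of $\delta(V)$ lies in $\theta_0(V) + \theta_1(V)$: this follows by a straightforward induction on word length from Proposition \ref{9}, since both recursive formulas for $\delta \circ \theta_0$ and $\delta \circ \theta_1$ apply a prepending operator $\theta_0$ or $\theta_1$ to the left factor. Consequently, the projection of $\delta(z)$ onto $\K X_\emptyset \otimes V$ is zero. On the other hand,
$$\Delta_{\shuffle}(y) = \sum_{n \geq 0} \alpha_n \sum_{p+q=n} \binom{n}{p} X_{x_1^p} \otimes X_{x_1^q}$$
has $X_\emptyset$-component (the terms with $p=0$) equal to $X_\emptyset \otimes y$. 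Comparing $X_\emptyset$-components in the equation $\delta(z) = -\Delta_{\shuffle}(y)$ therefore forces $y = 0$; then $\delta(z) = 0$ and the inductive hypothesis applied to $z$ gives $z \in Vect(X_{x_1^n})$, whence $x = \mu X_\emptyset + \theta_1(z) \in Vect(X_{x_1^n})$. The only delicate point, modest in size, is the left-factor claim about $\delta(V)$; once it is in hand, the rest is bookkeeping driven by the direct-sum decomposition of $V$.
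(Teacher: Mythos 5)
Your proof is correct, but it is organized differently from the paper's. The paper proves the hard inclusion by induction on the \emph{degree}, and it must strengthen the statement to: if $\delta(x)=\lambda \sum_{i+j=k-2} \frac{(k-2)!}{i!j!}X_{x_1^i}\otimes X_{x_1^j}$ then $x=\mu X_{x_1^{k-1}}$, because after peeling off a maximal $x_1$-prefix it is led to an equation of the form $\delta(f_1)=-\nu\,\Delta_{\shuffle}(X_{x_1^l})$, which it cannot handle with the unstrengthened hypothesis. You induct on word length instead, peel off a single letter via $V=\K X_\emptyset\oplus\theta_0(V)\oplus\theta_1(V)$, and replace the strengthening by the standalone observation that $\delta(V)\subseteq(\theta_0(V)\oplus\theta_1(V))\otimes V$ (which in fact needs no induction: it is immediate from proposition \ref{9}, since every basis element of $V$ other than $X_\emptyset$ lies in $\theta_0(V)$ or $\theta_1(V)$ and $\delta(X_\emptyset)=0$). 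That observation kills $y$ at once, since the $X_\emptyset\otimes\cdot$ component of $\Delta_{\shuffle}(v)$ equals $X_\emptyset\otimes v$ for \emph{every} $v\in V$ --- so you do not even need the explicit binomial expansion of $\Delta_{\shuffle}(y)$, nor the prior deduction that $y\in Vect(X_{x_1^n})$, to get $y=0$; this is precisely the point where the paper pays for not having isolated the left-factor lemma. Both arguments ultimately run on the same engine (the recursion of proposition \ref{9} together with separating tensors by the first letter of the left factor, which the paper also uses when it extracts the terms $X_\emptyset\otimes X_c$ and $X_{x_1^\alpha x_0c}\otimes X_d$), but your restructuring removes the auxiliary parameter $\lambda$ and yields a cleaner induction.
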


\begin{proof} The inclusion $\supseteq$ is trivial by corollary \ref{4}. Let us prove the other inclusion. \\

{\it First step.} Let us prove the following property: if $x \in V_k$ is such that
$$\delta(x)=\lambda \sum_{i+j=k-2} \frac{(k-2)!}{i!j!}X_{x_1^i}\otimes X_{x_1^j},$$
then there exists $\mu \in \K$ such that $x=\mu x_1^{k-1}$. It is obvious if $k=1$, as then $x=\mu \emptyset$. Let us assume 
the result at all ranks $<k$. We put $x=x_1^\alpha(x_0f_0+x_1f_1)$, where $\alpha\geq 0$, $f_0$ is homogeneous of degree $k-2-\alpha$ 
and $f_1$ is homogeneous of degree $k-1-\alpha$.
$$\delta(x)=(\theta_1^\alpha\otimes Id)\left((\theta_0\otimes Id)\circ \delta(f_0)+(\theta_1\otimes Id)\circ \delta(f_1)
+(\theta_1\otimes Id)\circ \Delta_{\shuffle}(f_0)\right).$$
Let us consider the terms in this expression of the form $X_\emptyset \otimes X_c$, with $c$ a word. This gives:
$$\lambda X_\emptyset\otimes X_{x_1^{k-2}}=0,$$
so $\lambda=0$ and $\delta(x)=0$.
Let us now consider the terms of the form $X_{x_1^\alpha x_0c}\otimes X_d$, with $c,d$ words. We obtain:
$$(\theta_1^\alpha \circ \theta_0 \otimes Id)\circ \delta(f_0)=0.$$
As both $\theta_0$ and $\theta_1$ are injective, we obtain $\delta(f_0)=0$. By the induction hypothesis, $f_0=\nu X_1{x_1^l}$, 
with $l=k-2-\alpha<k$. Hence:
$$\Delta_{\shuffle}(f_0)=\nu \sum_{i+j=l}\frac{l!}{i!j!}X_{x_1^i} \otimes X_{x_1^j},$$
and:
$$(\theta_1^{\alpha+1}\otimes Id)\left(\delta(f_1)+\nu \sum_{i+j=l} \frac{l!}{i!j!}X_{x_1^i} \otimes X_{x_1^j}\right)=0.$$
As $\theta_1$ is injective, we obtain with the induction hypothesis that $f_1=\mu X_{x_1^{k-2-\alpha}}$, so:
$$x=\mu X_{x_1^{k-1}}+\nu X_{x_1^\alpha x_0 x_1^{k-\alpha-2}}.$$
This gives:
\begin{align*}
\delta(x)&=\nu (\theta_1^{\alpha+1}\otimes Id)\left(\sum_{i+j=k-\alpha-2} \frac{(k-\alpha-2)!}{i!j!}
X_{x_1^i}\otimes X_{x_1^j}\right)\\
&=\nu \sum_{i+j=k-\alpha-2} \frac{(k-\alpha-2)!}{i!j!}
X_{x_1^{i+\alpha}}\otimes X_{x_1^j}
\\
&=0,
\end{align*}
so necessarily $\nu=0$ and $x=\mu X_{x_1^{k-1}}$.\\

{\it Second step.} Let $x\in Ker(\delta)$. As $\delta$ is homogeneous of degree $0$, the homogeneous components of $x$ are in $Ker(\delta)$.
By the first step, with $\lambda=0$, these homogeneous components, hence $x$, belong to $Vect(X_{x_1^k},k\geq 0)$. \end{proof}

\subsection{Dual prelie algebra}

As $V$ is a graded prelie coalgebra, its graded dual is a prelie algebra. We identify this graded dual with $\B\subseteq \A$; for any words $c,d$,
$X_c(d)=\delta_{c,d}$. The prelie product of $\B$ is denoted by $\bullet$.  Dualizing proposition \ref{9}, we obtain:

\begin{prop}\label{11}\begin{enumerate}
\item For all word $c$, $\emptyset \bullet c=0$.
\item For all words $c,d$, $(x_0c)\bullet d=x_0(c\bullet d)$.
\item For all words $c,d$, $(x_1c)\bullet d=x_1(c \bullet d)+x_0(c \shuffle d)$.
\end{enumerate}\end{prop}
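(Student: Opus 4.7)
The plan is to dualize Proposition \ref{9} directly, using the pairing $\langle c,X_d\rangle=\delta_{c,d}$ that identifies $\B$ with the graded dual of $V$. By the definition of the dual prelie product, for any words $a,b,w$,
$$\langle a\bullet b,X_w\rangle=\langle a\otimes b,\delta(X_w)\rangle,$$
so each of the three identities reduces to testing its two sides against all basis elements $X_w$ of $V$. Two observations do all the work: the adjoint of $\theta_i$ is the map $\theta_i^*:\B\to\B$ sending $x_id\mapsto d$ and annihilating every word not starting with $x_i$ (since $\langle\theta_i^*(c),X_d\rangle=\langle c,X_{x_id}\rangle$); and $\Delta_{\shuffle}$ is by construction dual to the shuffle product, so $\langle a\otimes b,\Delta_{\shuffle}(X_w)\rangle$ is the coefficient of $w$ in $a\shuffle b$.

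For point 1, I compute the coefficient of an arbitrary word $w$ in $\emptyset\bullet d$. Splitting on whether $w=\emptyset$, $w=x_0w'$, or $w=x_1w'$ and applying Proposition \ref{9}, every summand of $\delta(X_w)$ has a non-empty word on the left tensor factor (or $\delta(X_w)=0$), so pairing with $\emptyset\otimes d$ gives zero in every case.

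For point 2, I evaluate $\langle x_0e\otimes d,\delta(X_w)\rangle$ with the same case split. If $w=\emptyset$ or $w=x_1w'$ the left factor of $\delta(X_w)$ always starts with $x_1$, so the pairing is $0$. If $w=x_0w'$, Proposition \ref{9} gives $\delta(X_w)=(\theta_0\otimes Id)\delta(X_{w'})+(\theta_1\otimes Id)\Delta_{\shuffle}(X_{w'})$; the second summand vanishes against $x_0e\otimes d$, while $\theta_0^*(x_0e)=e$, leaving exactly $\langle e\otimes d,\delta(X_{w'})\rangle$, i.e.\ the coefficient of $w'$ in $e\bullet d$. Summing over $w$ yields $(x_0e)\bullet d=x_0(e\bullet d)$.

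Point 3 is the same case analysis for $\langle x_1e\otimes d,\delta(X_w)\rangle$. The case $w=\emptyset$ gives $0$; the case $w=x_1w'$ gives the coefficient of $w'$ in $e\bullet d$ via part 3 of Proposition \ref{9}; the case $w=x_0w'$ now produces a contribution from $(\theta_1\otimes Id)\Delta_{\shuffle}(X_{w'})$, which equals $\langle e\otimes d,\Delta_{\shuffle}(X_{w'})\rangle$, namely the coefficient of $w'$ in $e\shuffle d$. Recombining the two nonzero cases gives $(x_1e)\bullet d=x_1(e\bullet d)+x_0(e\shuffle d)$. There is no real obstacle here — the only thing to be careful about is tracking which branch of $\delta(X_{x_0w'})$ contributes under each left factor $x_0e$ or $x_1e$; once the left-deconcatenation interpretation of $\theta_i^*$ is set down, the three formulas drop out by direct inspection.
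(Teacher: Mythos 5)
Your proof is correct and takes essentially the same route as the paper: both dualize Proposition \ref{9} by testing each identity against the basis elements $X_w$, splitting on the first letter of $w$, and using that $\theta_i^*$ acts by left-deconcatenation and that $\Delta_{\shuffle}$ is dual to the shuffle product. The paper's own proof is organized identically (it records the values of $\theta_0^*$ and $\theta_1^*$ on $\emptyset$, $x_0c$, $x_1c$ and then evaluates $X_{x_0d}(u\bullet v)$ and $X_{x_1d}(u\bullet v)$ case by case), so there is nothing to add.
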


\begin{proof} Let $u,v,w$ be words. Then $X_w(u\bullet v)=\delta(X_w)(u\otimes v)$. Hence, if $d$ is a word:
\begin{align*}
X_\emptyset(u\bullet v)&=0,\\
X_{x_0d}(u\bullet v)&=(\theta_0\otimes Id)\circ \delta(X_d)(u\otimes v)+(\theta_1\otimes Id)\circ \Delta_{\shuffle}(X_d)
(u\otimes v)\\
&=X_d(\theta_0^*(u)\bullet v+\theta_1^*(u)\shuffle v),\\
X_{x_1d}(u\bullet v)&=(\theta_1\otimes Id)\otimes \delta(X_d)(u\otimes v)\\
&=X_d(\theta_1^*(u)\bullet v).
\end{align*}
Moreover, for all word $c$:
\begin{align*}
\theta_0^*(\emptyset)&=0,&\theta_0^*(x_0c)&=c,&\theta_0^*(x_1c)&=0,\\
\theta_1^*(\emptyset)&=0,&\theta_1^*(x_0c)&=0,&\theta_1^*(x_1c)&=c.
\end{align*}
Hence, for any words $c,d$:
\begin{align*}
X_{x_0d}(x_0c \bullet v)&=X_d(c \bullet v)&X_{x_0d}(x_1c \bullet v)&=X_d(c \shuffle v)\\
&=X_{x_0d}(x_0(x \bullet v)),&&=X_{x_0d}(x_1(c \bullet v)+x_0(c \shuffle v)),\\ \\
X_{x_1d}(x_0c \bullet v)&=0&X_{x_1d}(x_1c \bullet v)&=X_d(c \bullet v)\\
&=X_{x_1d}(x_0(x \bullet v)),&&=X_{x_1d}(x_1(c \bullet v)+x_0(c \shuffle v)).
\end{align*}
Hence, for any $w$, $X_w(x_0c\bullet v)=X_w(x_0(x \bullet v))$ and 
$X_w(x_1c\bullet v)=X_w((x_1(c \bullet v)+x_0(c \shuffle v))$. \end{proof}\\

{\bf Examples.}
\begin{align*}
x_0\bullet x_0&=0&x_0\bullet x_0x_0&=0&x_1\bullet x_0x_0&=x_0x_0x_0\\
x_0\bullet x_1&=0&x_0\bullet x_0x_1&=0&x_1\bullet x_0x_1&=x_0x_0x_1\\
x_1\bullet x_0&=x_0x_0&x_0\bullet x_1x_0&=0&x_1\bullet x_1x_0&=x_0x_1x_0\\
x_1\bullet x_1&=x_0x_1&x_0\bullet x_1x_1&=0&x_1\bullet x_1x_1&=x_0x_1x_1
\end{align*}
\begin{align*}
x_0x_0\bullet x_0&=0&x_0x_0\bullet x_1&=0\\
x_0x_1\bullet x_0&=x_0x_0x_0&x_0x_1\bullet x_1&=x_0x_0x_1\\
x_1x_0\bullet x_0&=2x_0x_0x_0&x_1x_0\bullet x_1&=x_0x_0x_1+x_0x_1x_0\\
x_1x_1\bullet x_0&=x_1x_0x_0+x_0x_1x_0+x_0x_0x_1&x_1x_1\bullet x_1&=x_1x_0x_1+2x_0x_1x_1
\end{align*}

Dualizing proposition \ref{10}:

\begin{theo}\label{12}
$\B=Vect(x_1^n,n\geq 0)\oplus (\B\bullet \B)$. Hence, $(x_1^n)_{n\geq 0}$ is a minimal system of generators of the prelie algebra $\B$.
\end{theo}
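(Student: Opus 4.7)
The plan is to deduce both claims by dualizing Proposition~\ref{10}, using that the pairing $\langle X_c,d\rangle=\delta_{c,d}$ between $V$ and $\B$ is nondegenerate in each graded component (which is finite-dimensional by proposition~\ref{8}) and that $\bullet$ is, by construction, the transpose of $\delta\colon V\to V\otimes V$.

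First I would establish the orthogonality $\B\bullet\B = \mathrm{Ker}(\delta)^{\perp}$. For any $v\in V$ and $x,y\in\B$, the defining relation $\langle v,x\bullet y\rangle=\langle \delta(v),x\otimes y\rangle$ shows that $v\in\mathrm{Ker}(\delta)$ if and only if $v$ annihilates every element of $\B\bullet\B$; hence $\mathrm{Ker}(\delta)=(\B\bullet\B)^{\perp}$. Since everything is graded with finite-dimensional graded pieces, biduality $(X^{\perp})^{\perp}=X$ applies, giving $\B\bullet\B=\mathrm{Ker}(\delta)^{\perp}$. Plugging in proposition~\ref{10}, $\mathrm{Ker}(\delta)=\mathrm{Vect}(X_{x_1^n},n\geq 0)$, and reading the pairing in the monomial basis, the orthogonal of this space inside $\B$ is precisely the span of all words different from every $x_1^n$. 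Together with the obvious complementary summand $\mathrm{Vect}(x_1^n,n\geq 0)$, this produces the decomposition
\[
\B=\mathrm{Vect}(x_1^n,\, n\geq 0)\oplus (\B\bullet\B).
\]

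For the minimality statement, I would pass to the quotient $\B/(\B\bullet\B)$, which by the decomposition has $(x_1^n)_{n\geq 0}$ as a basis (of the classes). Two verifications remain. First, $(x_1^n)_{n\geq 0}$ does generate $\B$ as a prelie algebra: I argue by induction on the degree $k$ of proposition~\ref{8}, noting that $\bullet$ preserves this degree and that $\B$ starts in degree $1$ with $\emptyset=x_1^0$. Given $x\in\B$ of degree $k$, the decomposition writes $x=x'+y$ with $x'\in\mathrm{Vect}(x_1^n)$ and $y\in\B\bullet\B$, both homogeneous of degree $k$; then $x'$ is a scalar multiple of a generator $x_1^{k-1}$, while $y$ is a sum of products $a\bullet b$ with $\deg(a),\deg(b)<k$, hence in the subalgebra generated by $(x_1^n)$ by induction. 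Second, no proper subfamily generates: if we remove $x_1^{n_0}$, then any prelie polynomial in the remaining generators lies in $\mathrm{Vect}(x_1^n\colon n\neq n_0)+\B\bullet\B$, so its class in $\B/(\B\bullet\B)$ avoids the basis vector $x_1^{n_0}\bmod \B\bullet\B$, contradicting generation.

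The main (minor) point to be careful with is the biduality step: the flip $(\B\bullet\B)^{\perp}=\mathrm{Ker}(\delta)\ \Rightarrow\ \B\bullet\B=\mathrm{Ker}(\delta)^{\perp}$ is only automatic thanks to the finite-dimensional graded setting provided by proposition~\ref{8}; everything else is formal duality and a straightforward induction on the connected gradation.
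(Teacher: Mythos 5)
Your proof is correct and follows essentially the same route as the paper: identify $\B\bullet\B=\mathrm{Im}(\bullet)$ with $Ker(\delta)^{\perp}=Vect(X_{x_1^n},n\geq 0)^{\perp}$ by dualizing proposition \ref{10} (the graded finite-dimensional biduality point you flag is indeed the implicit justification), then deduce generation and minimality from the connected gradation. The only cosmetic difference is that the paper packages your induction on the degree and your quotient-by-$\B\bullet\B$ argument into a separate general lemma on graded connected prelie algebras, whereas you carry them out inline for $\B$.
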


\begin{proof} As $\bullet=\delta^*$, $Im(\bullet)=Ker(\delta)^\perp=Vect(X_{x_1^n},n\geq 0)^\perp$. The first assertion is then immediate.
As $\A$ is a graded, connected prelie coalgebra, $\B$ is a graded, connected prelie algebra. The result then comes from the next lemma. \end{proof}

\begin{lemma}
Let $A$ be a graded, connected prelie algebra, and $V$ be a graded subspace of $A$.
\begin{enumerate}
\item $V$ generates $A$ if, and only if, $A=V+A\bullet A$.
\item $V$ is a minimal subspace of generators of $A$ if, and only if, $A=V\oplus A\bullet A$.
\end{enumerate}\end{lemma}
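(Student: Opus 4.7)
The plan is to exploit the grading in the standard way: in a graded connected prelie algebra (meaning $A_0=(0)$), the prelie product sends $A_i\otimes A_j$ to $A_{i+j}$ with $i,j\geq 1$, so $(A\bullet A)_n$ depends only on components $A_k$ with $k<n$. This will let me run an induction on the degree.

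For part 1, the forward implication is immediate: if $V$ generates $A$ then every element of $A$ is a linear combination of elements of $V$ and of prelie products, so $A\subseteq V+A\bullet A$. For the converse, assuming $A=V+A\bullet A$, I would denote by $\langle V\rangle$ the prelie subalgebra of $A$ generated by $V$ and prove $A_n\subseteq \langle V\rangle$ by induction on $n\geq 1$. The base case $n=1$ works because $(A\bullet A)_1=0$ by connectedness, hence $A_1=V_1\subseteq\langle V\rangle$. For the induction step, write $A_n=V_n+(A\bullet A)_n$, and observe that $(A\bullet A)_n=\sum_{i+j=n,\,i,j\geq 1}A_i\bullet A_j$; since each $i,j<n$, the induction hypothesis gives $A_i,A_j\subseteq\langle V\rangle$, so $A_n\subseteq \langle V\rangle$.

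For part 2, the backward implication is easy: if $A=V\oplus A\bullet A$, then by part 1, $V$ generates $A$. For minimality, suppose $V'\subsetneq V$ is graded and also generates. Then by part 1, $A=V'+A\bullet A$, and for any $v\in V$, writing $v=v'+w$ with $v'\in V'\subseteq V$ and $w\in A\bullet A$, we get $v-v'\in V\cap (A\bullet A)=(0)$, so $v=v'\in V'$; hence $V\subseteq V'$, a contradiction. Conversely, suppose $V$ is a minimal generating graded subspace. By part 1, $A=V+A\bullet A$, and it remains to show the sum is direct. The intersection $W:=V\cap (A\bullet A)$ is a graded subspace of $V$ (since both $V$ and $A\bullet A$ are graded), so it admits a graded complement $V'$ in $V$, giving $V=V'\oplus W$. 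Then
$$A=V+A\bullet A=V'+W+A\bullet A=V'+A\bullet A,$$
since $W\subseteq A\bullet A$. By part 1 again, $V'$ generates $A$, and by minimality of $V$ one must have $V'=V$, i.e., $W=(0)$.

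The only subtle step is verifying that $W=V\cap(A\bullet A)$ is graded, but this is automatic since both subspaces are graded; everything else reduces to the key degree-lowering observation that connectedness provides. I do not expect any genuine obstacle.
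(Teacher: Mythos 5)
Your proposal is correct and follows essentially the same route as the paper: the forward implication of part 1 without using the gradation, the converse by induction on the degree using connectedness ($A_0=(0)$, so $(A\bullet A)_n$ only involves components of degree $<n$), and part 2 by comparing $V$ with a graded complement of $V\cap(A\bullet A)$ and invoking part 1 twice. The minor rephrasings in part 2 (e.g.\ deducing $V\subseteq V'$ from directness rather than observing $W+A\bullet A\subsetneq A$) do not change the substance of the argument.
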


\begin{proof} 1. $\Longrightarrow$. Let $x \in A$. Then it can be written as an element of the prelie subalgebra generated by $v$, 
so as the sum of an element of $V$ and of iterated prelie products of elements of $V$. Hence, $x\in V+A\bullet A$.  Note that we did not use 
the gradation of $A$ to prove this point.\\

1. $\Longleftarrow$. Let $B$ be the prelie subalgebra generated by $V$. Let $x\in A_n$, let us prove that $x\in B$ by induction on $n$.
As $A_0=(0)$, it is obvious if $n=0$. Let us assume the result at all ranks $<n$. We obtain, by the gradation:
$$A_n=V_n\oplus \sum_{i=1}^{n-1} A_i \bullet A_{n-i}.$$
So we can write $x=\lambda x_1^{n-1}+\sum x_i \bullet y_i$, 
where $x_i$, $y_i$ are homogeneous of degree $<n$. By the induction hypothesis, these elements belong to $B$, so $x \in B$. \\

2. $\Longrightarrow$. By 1. $\Longrightarrow$, $A=V+A\bullet A$. If $V \cap A\bullet A\neq (0)$, we can choose a graded subspace $W\subsetneq V$,
such that $A=W\oplus A\bullet A$. By 1. $\Longleftarrow$, $W$ generates $A$, so $V$ is not a minimal system of generators of $A$: contradiction.
So $A=V\oplus A\bullet A$. \\

2. $\Longleftarrow$. By 1. $\Longleftarrow$, $V$ is a space of generators of $A$. If $W \subsetneq V$, then $W \oplus A\bullet A \subsetneq A$. 
By 1. $\Longrightarrow$, $W$ does not generate $V$. So $V$ is a minimal subspace of generators.
\end{proof}

\begin{prop}\label{14}
For all $x,y,z\in \B$, $(x\shuffle y)\bullet z=(x\bullet z)\shuffle y+x\shuffle (y\bullet z)$.
\end{prop}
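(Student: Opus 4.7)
The plan is to prove the identity by induction on $|x|+|y|$, after reducing by trilinearity to the case where $x$ and $y$ are words and $z\in\B$. The base cases are immediate: if $x=\emptyset$, then $x\shuffle y=y$ and $x\bullet z=0$, so both sides equal $y\bullet z$; the case $y=\emptyset$ is symmetric.

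For the inductive step I would write $x=x_ix'$ and $y=x_jy'$ with $i,j\in\{0,1\}$, and use the recursive definition of the shuffle
\[
x\shuffle y \;=\; x_i(x'\shuffle y)+x_j(x\shuffle y').
\]
Applying the operator $(-)\bullet z$ and invoking Proposition \ref{11} (which tells us $x_0 f\bullet z=x_0(f\bullet z)$ and $x_1 f\bullet z=x_1(f\bullet z)+x_0(f\shuffle z)$ for any $f\in \B$), I then apply the inductive hypothesis to the two smaller products $(x'\shuffle y)\bullet z$ and $(x\shuffle y')\bullet z$. On the right-hand side I would expand $x\bullet z$ and $y\bullet z$ using the same rules from Proposition \ref{11}, and then distribute each resulting shuffle over the first-letter splitting $x_i(x'\shuffle-)+x_j(-\shuffle y')$ coming from the recursive definition of the shuffle product. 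Matching terms one by one then reduces to verifying, for the leftover $x_0$-prefixed terms, that $(x'\shuffle y)\shuffle z=(x'\shuffle z)\shuffle y$ and $(x\shuffle y')\shuffle z=x\shuffle(y'\shuffle z)$, which is just the associativity and commutativity of $\shuffle$.

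The main obstacle is purely bookkeeping: there are four cases according to the pair $(i,j)$, and in each one the expansion of both sides produces six to eight terms that must be paired off. The conceptual content is very small once one notices that all the new $x_0$-prefixed terms produced by rule 3 of Proposition \ref{11} on either side of the identity reassemble, via associativity and commutativity of $\shuffle$, into the corresponding $x_0$-prefixed terms on the other side. No non-trivial combinatorial lemma is required beyond what has already been set up.
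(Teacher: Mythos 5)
Your proposal is correct and follows essentially the same route as the paper: reduce to words, induct on the total length $lg(x)+lg(y)$, split off the first letters via the recursive definition of $\shuffle$, apply the rules of Proposition \ref{11} together with the induction hypothesis, and match terms in the four cases $(i,j)$, with the extra $x_0$-prefixed terms from rule 3 absorbed by commutativity and associativity of $\shuffle$. This is exactly the structure of the paper's proof.
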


\begin{proof} We prove it if $x,y,z$ are words. If $x=\emptyset$, then:
$$(\emptyset \shuffle y)\bullet z=y\bullet z=(\emptyset \bullet z)\shuffle y+\emptyset\shuffle(u\bullet z).$$
If $y=\emptyset$, the result is also true, using the commutativity of $\shuffle$. We can now consider that $x,y$ are nonempty words.

Let us proceed by induction on $k=lg(x)+lg(y)$.
If $k=0$ or $1$, there is nothing to prove. Let us assume the result at all rank $<k$. Four cases can occur.\\

{\it First case.} $x=x_0a$ and $y=x_0b$. Then:
\begin{align*}
(x\shuffle y)\bullet z&=(x_0(a\shuffle x_0b) \bullet z+(x_0(x_0a\shuffle b))\bullet z\\
&=x_0((a\shuffle x_0b)\bullet z)+x_0((x_0a\shuffle b)\bullet z)\\
&=x_0((a\bullet z)\shuffle x_0b)+x_0(a\shuffle ((x_0b)\bullet z))
+x_0(((x_0a)\bullet z)\shuffle b)+x_0(x_0a\shuffle (b\bullet z))\\
&=x_0((a\bullet z)\shuffle x_0b)+x_0(a\shuffle (x_0(b\bullet z))
+x_0((x_0(a\bullet z))\shuffle b)+x_0(x_0a\shuffle (b\bullet z))\\
&=x_0(a\bullet z)\shuffle x_0b+x_0a\shuffle x_0(b\bullet z)\\
&=(x\bullet z)\shuffle y+x\shuffle (y\bullet z).
\end{align*}

{\it Second case.} $x=x_1a$ and $y=x_0b$. This gives:
\begin{align*}
(x\shuffle y)\bullet z&=(x_1(a\shuffle x_0b))\bullet z+(x_0(x_1a\shuffle b))\bullet z\\
&=x_1((a\bullet z)\shuffle x_0b)+x_1(a\shuffle x_0(b\bullet z))\\
&+x_0(a\shuffle x_0b\shuffle z)+x_0(((x_1a)\bullet z)\shuffle b)+x_0(x_1a\shuffle(b\bullet z))\\
&=x_1((a\bullet z)\shuffle x_0b)+x_1(a\shuffle x_0(b\bullet z))\\
&+x_0(a\shuffle x_0b\shuffle z)+x_0((x_1(a\bullet z))\shuffle b)
+x_0((x_0(a\shuffle z))\shuffle b)+x_0(x_1a\shuffle(b\bullet z)),\\ \\
(x\bullet z)\shuffle y&=(x_1(a\bullet z))\shuffle x_0b+(x_0(a\shuffle z))\shuffle(x_0b)\\
&=x_1((a\bullet z)\shuffle (x_0b))+x_0(x_1(a\bullet z)\shuffle b)\\
&+x_0(a\shuffle z\shuffle x_0b)+x_0((x_0(a\shuffle z))\shuffle b), \\ \\
x\shuffle(y\bullet z)&=x_1a \shuffle x_0(b\bullet z)\\
&=x_1(a\shuffle x_0(b\bullet z))+x_0(x_1a\shuffle (b\bullet z)).
\end{align*}
These computations imply the required equality.\\

{\it Third case.} $x=x_0a$ and $y=x_1b$. This is a consequence of the second case, using the commutativity of $\shuffle$.\\

{\it Last case.} $x=x_1a$ and $y=x_1b$. Similar computations give:
\begin{align*}
(x\shuffle y)\bullet z&=x_1((a\bullet z)\shuffle x_1b)+x_1(a\shuffle x_1(b\bullet w))+x_1(a\shuffle x_0(b\shuffle z))+
x_0(a\shuffle x_1b\shuffle z)\\
&+x_1(x_1a\shuffle(b\bullet z))+x_1((x_1(a\bullet z))\shuffle b)+x_1((x_0(a\shuffle z))\shuffle b)
+x_0(a\shuffle x_1b\shuffle z),\\ \\
(x\bullet z)\shuffle y&=x_1((a\bullet z)\shuffle x_1b)+x_1((x_1(a\bullet z))\shuffle b)+x_0(a\shuffle x_1b\shuffle z)
+x_1((x_0(a\shuffle z))\shuffle b),\\ \\
x\shuffle (y\bullet z)&=x_1(a\shuffle x_1(b\bullet w))+x_1(a\shuffle x_0(b\shuffle z))+x_1(x_1a\shuffle(b\bullet z))
+x_0(a\shuffle x_1b\shuffle z).
\end{align*}
So the result holds in all cases. \end{proof}

\section{Presentation of $\B$ as a Com-Prelie algebra}

Proposition \ref{14} motivates the following definition:

\begin{defi}\label{15} \cite{Mansuy}
A \emph{Com-Prelie algebra} is a triple $(V,\bullet,\shuffle)$, such that:
\begin{enumerate}
\item $(V,\bullet)$ is a prelie algebra.
\item $(V,\shuffle)$ is a commutative, associative algebra (non necessarily unitary).
\item For all $a,b,c \in V$, $(a\shuffle b)\bullet c=(a\bullet c) \shuffle b+a\shuffle (b\bullet c)$.
\end{enumerate}\end{defi}

For example, $\B$ is a Com-Prelie algebra. See \cite{Mansuy} for an example of Com-Prelie algebra based on rooted trees.

\subsection{Free Com-Prelie algebras}

\begin{defi}\begin{enumerate}
\item A \emph{partitioned forest} is a pair $(F,I)$ such that:
\begin{enumerate}
\item $F$ is a rooted forest (the edges of $F$ being oriented from the leaves to the roots).
\item $I$ is a partition of the vertices of $F$ with the following condition:
if $x,y$ are two vertices of $F$ which are in the same part of $I$, then either they are both roots, or they have the same direct descendant.
\end{enumerate}
\item We shall say that a partitioned forest is a \emph{partitioned tree} if all the roots are in the same part of the partition.
\item Let $\D$ be a set. A \emph{partitioned tree decorated by $\D$} is a pair $(t,d)$, where $t$ is a partitioned tree and $d$ 
is a map from the set of vertices of $t$ into $\D$. For any vertex $x$ of $t$, $d(x)$ is called the \emph{decoration} of $x$.
\item The set of isoclasses of partitioned trees will be denoted by $\PT$. 
For any set $\D$, the set of isoclasses of partitioned trees decorated by $\D$ will be denoted by $\PT(\D)$.
\end{enumerate}\end{defi}

{\bf Examples.} We represent partitioned trees by the Hasse graph of the underlying rooted forest, the partition being represented by horizontal edges,
of different colors. Here are all the partitioned trees with $\leq 4$ vertices:
$$\tun;\tdeux,
\hdeux;\ttroisun,
\htroisun,\ttroisdeux,
\htroisdeux=\htroistrois,
\htroisquatre;
\tquatreun, \hquatreun=\hquatredeux,\hquatretrois,\tquatredeux=\tquatretrois,\hquatrequatre=\hquatrecinq,\tquatrequatre,\hquatresix,\tquatrecinq,$$
$$\hquatresept=\hquatrehuit,\hquatreneuf=\hquatredix,\hquatreonze=\hquatredouze,\hquatretreize,\hquatrequatorze=\hquatrequinze=\hquatreseize,
\hquatredixsept.$$

\begin{defi}\label{17} Let $t=(t,I)$ and $t'=(t',J) \in \PT$. 
\begin{enumerate}
\item Let $s$ be a vertex of $t'$. The partitioned tree $t\bullet_s t'$ is defined as follows:
\begin{enumerate}
\item As a rooted forest, $t \bullet_s t'$ is obtained by grafting all the roots of $t'$ on the vertex $s$ of $t$.
\item We put $I=\{I_1,\ldots,I_k\}$ and $J=\{J_1,\ldots,J_l\}$. The partition of the vertices of this rooted forest is 
$I\sqcup J=\{I_1,\ldots,I_k,J_1,\ldots,J_l\}$.
\end{enumerate}
\item The partitioned tree $t\shuffle t'$ is defined as follows:
\begin{enumerate}
\item As a rooted forest, $t \shuffle t'$ is $tt'$.
\item We put $I=\{I_1,\ldots,I_k\}$ and $J=\{J_1,\ldots,J_l\}$
and we assume that the set of roots of $t$ is $I_1$ and the set of roots of $t'$ is $J_1$. The partition of the vertices of $t \shuffle t'$
$\{I_1\sqcup J_1,I_2,\ldots,I_k,J_2,\ldots,J_l\}$.
\end{enumerate}\end{enumerate}\end{defi}

{\bf Examples.} \begin{enumerate}
\item Here are the three possible graftings $\htroisun \bullet_s \tun$: $\hquatreun$, $\hquatrequatre$ and $\hquatrecinq$.
\item Here are the two possible graftings $\tdeux \bullet_s \hdeux$: $\hquatreun$ and $\hquatresix$.
\end{enumerate}

These operations can also be defined for decorated partitioned trees.

\begin{prop}
Let $\D$ be a set. We denote by $\g_{\PT(\D)}$ the vector space generated by $\PT(\D)$. 
We extend $\shuffle$ by bilinearity on $\g_{\PT(\D)}$ and we define a second product $\bullet$ on $\g_{\PT(\D)}$ in the following way:
if $t,t'\in \PT(\D)$, 
$$t\bullet t'=\sum_{s\in V(t)} t\bullet_s t'.$$
Then $(\g_{\PT(\D)}, \bullet,\shuffle)$ is a Com-Prelie algebra. 
\end{prop}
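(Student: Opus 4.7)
The plan is to verify the three axioms of Definition \ref{15} separately: commutativity and associativity of $\shuffle$, the right prelie relation for $\bullet$, and the Com-Prelie compatibility. All three reduce to combinatorial identifications of partitioned trees, following the pattern of Chapoton's classical proof \cite{Chapoton} for the prelie product on rooted trees.

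First, the commutativity and associativity of $\shuffle$ are immediate from the definition: the underlying rooted forest of $t \shuffle t'$ is the disjoint union $tt'$ (commutative and associative up to isomorphism), and the partition is obtained from $I \sqcup J$ by merging only the two root blocks $I_1$ and $J_1$ into $I_1 \sqcup J_1$. Both disjoint union and block merging are commutative and associative.

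Second, for the right prelie relation for $\bullet$, I would expand
\[
(t_1 \bullet t_2) \bullet t_3 = \sum_{s_2 \in V(t_1)} \sum_{s_3 \in V(t_1 \bullet_{s_2} t_2)} (t_1 \bullet_{s_2} t_2) \bullet_{s_3} t_3
\]
and split the inner sum according to whether $s_3 \in V(t_1)$ or $s_3 \in V(t_2)$. The key identification is that for $s_2 \in V(t_1)$ and $s_3 \in V(t_2)$, the partitioned tree $(t_1 \bullet_{s_2} t_2) \bullet_{s_3} t_3$ coincides with $t_1 \bullet_{s_2} (t_2 \bullet_{s_3} t_3)$: both have the same underlying rooted tree, and the same partition $I \sqcup J \sqcup K$, since grafting never merges blocks. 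Hence the $s_3 \in V(t_2)$ part of the sum is exactly $t_1 \bullet (t_2 \bullet t_3)$, and the difference $(t_1 \bullet t_2) \bullet t_3 - t_1 \bullet (t_2 \bullet t_3)$ reduces to the sum over $s_2, s_3 \in V(t_1)$ of graftings of $t_2$ and $t_3$ on two (possibly equal) vertices of $t_1$. This residual sum is manifestly symmetric in $t_2$ and $t_3$, which gives the prelie identity.

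Third, for the Com-Prelie compatibility, I would expand $(a \shuffle b) \bullet c = \sum_{s \in V(a) \sqcup V(b)} (a \shuffle b) \bullet_s c$ and split the sum according to whether $s \in V(a)$ or $s \in V(b)$. For $s \in V(a)$, I claim $(a \shuffle b) \bullet_s c = (a \bullet_s c) \shuffle b$: in both cases the underlying forest is $(a \bullet_s c)\, b$, and the partition equals $\{I_1 \sqcup J_1, I_2, \ldots, I_k, J_2, \ldots, J_l, K_1, \ldots, K_m\}$, where $K$ is the partition of $c$; this uses the fact that the roots of $a \bullet_s c$ are still the roots of $a$, so the subsequent shuffle with $b$ merges exactly $I_1$ with $J_1$. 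Summing over $s \in V(a)$ gives $(a \bullet c) \shuffle b$, and the symmetric case $s \in V(b)$ yields $a \shuffle (b \bullet c)$, producing the required identity. The main obstacle throughout is purely bookkeeping on the partition: one must check carefully that the blocks of $c$ never merge with blocks of $a$ or $b$ during grafting, and that the root block of $a$ is preserved by $\bullet_s$, so that merging it with $J_1$ before or after grafting $c$ produces the same partitioned tree.
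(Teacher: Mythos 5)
Your proof is correct and follows essentially the same route as the paper: the same splitting of the grafting sum according to where the second graft lands (with the residual double sum over vertices of the first tree being manifestly symmetric in the last two arguments), and the same vertex-by-vertex identification $(a\shuffle b)\bullet_s c=(a\bullet_s c)\shuffle b$ for the Com-Prelie axiom. The extra care you take with the partition bookkeeping (blocks of the grafted tree never merging, the root block being preserved under $\bullet_s$) is exactly the point the paper leaves implicit, and it checks out.
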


\begin{proof} Let $t,t',t''$ be three partitioned trees.

If $s',s''$ are two vertices of $t$, we define
by $t\bullet_{s,s'}(t',t'')$ the partitioned trees obtained by grafting the roots of $t'$ on $s'$, the roots of $t''$ on $s''$, the partition of the
vertices of the obtained rootes forest being the union of the partitions of $t$, $t'$ and $t''$. Then:
\begin{align*}
(t\bullet t')\bullet t''&=\sum_{s'\in V(t)} (t\bullet_{s'} t') \bullet t''\\
&=\sum_{s',s''\in V(t)}(t\bullet_{s'} t')\bullet_{s''} t''+\sum_{s'\in V(t), s''\in V(t')} (t\bullet_{s'} t')\bullet_{s''} t''\\
&=\sum_{s',s''\in V(t)} t\bullet_{s's''}(t',t'')+\sum_{s'\in V(t), s''\in V(t')} t\bullet_{s'} (t'\bullet_{s''} t'')\\
&=\sum_{s',s''\in V(t)} t\bullet_{s's''}(t',t'')+t\bullet (t'\bullet t'').
\end{align*}
So $(t\bullet t')\bullet t''-t\bullet (t'\bullet t'')$ is clearly symmetric in $t$ and $t'$, and $\bullet$ is prelie. \\

Moreover, $(t\shuffle t') \shuffle t''=t\shuffle (t'\shuffle t'')$ is the rooted forest $tt't''$, the partition being $\{I_1\sqcup J_1 \sqcup K_1,I_2,\ldots,I_k,
J_2,\ldots,J_l,K_2,\ldots,K_m\}$, with immediate notations; $t \shuffle t'=t'\shuffle t$ is the rooted forest $tt'$, the partition being
$\{I_1\sqcup J_1,I_2,\ldots,I_k,J_2,\ldots,J_l\}$. So $\shuffle$ is an associative, commutative product.

Finally:
\begin{align*}
(t\shuffle t')\bullet t''&=\sum_{s\in V(t)} (t\shuffle t') \bullet_s t''+\sum_{s'\in V(t')} (t\shuffle t')\bullet_{s'} t''\\
&=\sum_{s\in V(t)} (t\bullet_s t'')\shuffle t' +\sum_{s'\in V(t')} t\shuffle (t'\bullet_{s'} t'')\\
&=(t\bullet t')\shuffle t''+t\shuffle (t'\bullet t'').
\end{align*}
So $\g_{\PT(\D)}$ is Com-Prelie. \end{proof}\\

In particular, $\g_{\PT(\D)}$ is prelie. Let us use the extension of the prelie product $\bullet$ to $S(\g_{\PT(\D)})$ 
defined by Oudom and Guin \cite{Oudom1,Oudom2}: 
\begin{enumerate}
\item If $t_1,\ldots, t_k \in \g_{\PT(\D)}$, $t_1\ldots t_k \bullet 1=t_1\ldots t_k$.
\item If $t,t_1,\ldots,t_k \in \g_{\PT(\D)}$, $t\bullet t_1\ldots t_k=(t\bullet t_1\ldots t_{k-1})\bullet t_k-t\bullet (t_1\ldots t_{k-1}
\bullet t_k)$.
\item If $a,b,c \in S(\g_{\PT(\D)})$, $ab \bullet c=(a\bullet c^{(1)} )(b \bullet c^{(2)})$, where $\Delta(c)=c^{(1)}
\otimes c^{(2)}$ is the usual coproduct of $S(\g_{\PT(\D)})$. In particular, if $t_1,\ldots,t_k,t\in \PT(\D)$:
$$t_1\ldots t_k \bullet t=\sum_{i=1}^k t_1\ldots (t_i \bullet t)\ldots t_k.$$
\end{enumerate}

\begin{lemma}\label{19}
Let $t=(t,I),t_1=(t_1,I^{(1)}),\ldots,t_k=(t_k,I^{(k)})$ be partitioned trees $(k\geq 1$). Let $s_1,\ldots,s_k \in V(t)$.
The partitioned tree $t\bullet_{s_1,\ldots,s_k}(t_1,\ldots,t_k)$ is obtained by grafting the roots of $t_i$ on $s_i$ for all $i$,
the partition being $I\sqcup I^{(1)}\sqcup\ldots \sqcup I^{(k)}$. Then:
$$t\bullet t_1\ldots t_k=\sum_{s_1,\ldots,s_k \in V(t)} t\bullet_{s_1,\ldots,s_k} (t_1,\ldots,t_k).$$
\end{lemma}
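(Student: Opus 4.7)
The plan is to proceed by induction on $k$. The case $k=1$ is immediate from the definition $t \bullet t_1 = \sum_{s_1 \in V(t)} t \bullet_{s_1} t_1$. For the inductive step, I would invoke the Oudom--Guin recurrence
\[
t\bullet t_1\ldots t_k=(t\bullet t_1\ldots t_{k-1})\bullet t_k-t\bullet(t_1\ldots t_{k-1}\bullet t_k),
\]
expand each piece with the induction hypothesis, and watch the unwanted terms cancel.

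First I would apply the induction hypothesis at rank $k-1$ to $t\bullet t_1\ldots t_{k-1}$, obtaining a sum over $(s_1,\ldots,s_{k-1})\in V(t)^{k-1}$ of partitioned trees $T=t\bullet_{s_1,\ldots,s_{k-1}}(t_1,\ldots,t_{k-1})$, whose vertex set splits as $V(T)=V(t)\sqcup V(t_1)\sqcup\cdots\sqcup V(t_{k-1})$. Then the single--tree prelie product $T\bullet t_k=\sum_{s_k\in V(T)}T\bullet_{s_k}t_k$ breaks into two types of terms according to whether $s_k$ lies in $V(t)$ or in some $V(t_i)$. The $s_k\in V(t)$ summands are exactly the terms on the right-hand side of the claim; the key observation here is that, because the partition of a composed tree is just the disjoint union of the pieces' partitions, the $s_k\in V(t_i)$ summand coincides with $t\bullet_{s_1,\ldots,s_{k-1}}(t_1,\ldots,t_i\bullet_{s_k}t_k,\ldots,t_{k-1})$.

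Next I would handle the correction term. By the definition of $\bullet$ on $S(\g_{\PT(\D)})$ applied to a product, one has $t_1\ldots t_{k-1}\bullet t_k=\sum_{i=1}^{k-1}t_1\ldots(t_i\bullet t_k)\ldots t_{k-1}$, and expanding the inner $t_i\bullet t_k$ gives $\sum_i\sum_{s_k\in V(t_i)}t_1\ldots(t_i\bullet_{s_k}t_k)\ldots t_{k-1}$. Applying the induction hypothesis at rank $k-1$ to $t\bullet(\cdots)$ for each summand produces precisely the terms $t\bullet_{s_1,\ldots,s_{k-1}}(t_1,\ldots,t_i\bullet_{s_k}t_k,\ldots,t_{k-1})$ identified above. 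So the subtraction in the Oudom--Guin formula removes exactly the ``off--$t$'' summands, and what remains is $\sum_{s_1,\ldots,s_k\in V(t)}t\bullet_{s_1,\ldots,s_k}(t_1,\ldots,t_k)$.

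The only real obstacle is the bookkeeping check that the two descriptions of the ``off--$t$'' terms genuinely coincide as partitioned trees; this reduces to the observation that the partition assigned by successive graftings depends only on the underlying pieces $(I,I^{(1)},\ldots,I^{(k)})$ and is insensitive to the order in which the graftings are performed, which in turn is built into the definition of $\bullet_{s'}$. Once this is recorded, the induction closes without further computation.
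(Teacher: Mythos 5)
Your proof is correct and follows essentially the same route as the paper: induction on $k$ via the Oudom--Guin recurrence, splitting $(t\bullet t_1\ldots t_{k-1})\bullet t_k$ according to whether the new grafting vertex lies in $V(t)$ or in some $V(t_i)$, and cancelling the latter against the correction term $t\bullet(t_1\ldots t_{k-1}\bullet t_k)$. The bookkeeping identification of the ``off--$t$'' terms that you flag is exactly the step the paper relies on as well.
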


\begin{proof} By induction on $k$. This is obvious if $k=1$. Let us assume the result at rank $k$.
\begin{align*}
t\bullet t_1\ldots t_{k+1}&=(t\bullet t_1\ldots t_k) \bullet t_{k+1}-\sum_{i=1}^k t\bullet (t_1\ldots (t_i\bullet t_{k+1}) \ldots t_k)\\
&=\sum_{s_1,\ldots,s_k \in V(t)}(t\bullet_{s_1,\ldots,s_k} (t_1,\ldots, t_k)) \bullet t_{k+1}
-\sum_{i=1}^k \sum_{s \in V(t_i)} t\bullet (t_1\ldots (t_i \bullet_s t_{k+1})\ldots t_i)\\
&=\sum_{s_1,\ldots,s_{k+1} \in V(t)}(t\bullet_{s_1,\ldots,s_k} (t_1,\ldots,t_k) )\bullet_{s_{k+1}} t_{k+1}\\
&+\sum_{i=1}^k \sum_{s\in V(t_i)}(t\bullet_{s_1,\ldots,s_k} (t_1,\ldots, t_k)) \bullet_s t_{k+1}\\
&-\sum_{i=1}^k \sum_{s_1,\ldots,s_k \in V(t)}\sum_{s \in V(t_i)} t\bullet_{s_1,\ldots,s_k} 
(t_1,\ldots, t_i \bullet_s t_{k+1},\ldots, t_i)\\
&=\sum_{s_1,\ldots,s_{k+1} \in V(t)}t\bullet_{s_1,\ldots,s_{k+1}} (t_1,\ldots,t_{k+1}).
\end{align*}
Hence, the result holds for all $k$. \end{proof}

\begin{theo}
Let $\D$ be a set, let $A$ be a Com-Prelie algebra, and let $a_d \in A$ for all $d\in \D$.
There exists a unique morphism of Com-Prelie algebra $\phi:\g_{\PT(\D)}\longrightarrow A$, such that $\phi(\tdun{$d$})=a_d$
 for all $d\in \D$. In other words, $\g_{\PT(\D)}$ is the free Com-Prelie algebra generated by $\D$.
\end{theo}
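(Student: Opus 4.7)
My plan is to establish uniqueness and existence simultaneously by exhibiting a canonical decomposition of every partitioned tree into one-vertex trees via $\bullet$ and $\shuffle$, and then verifying that the induced map respects both products.

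For uniqueness and generation, I would show by induction on $|V(t)|$ that the one-vertex trees $\tdun{$d$}$, $d \in \D$, generate $\g_{\PT(\D)}$ as a Com-Prelie algebra. Given $t \in \PT(\D)$, two cases arise. If the (unique) root-part of $t$ contains $m \geq 2$ roots, write $t = t_1 \shuffle \cdots \shuffle t_m$, where $t_i$ is the partitioned subtree obtained by keeping the $i$-th root together with all vertices above it (the defining condition of a partitioned forest guarantees that each $t_i$ is a bona fide partitioned tree with a single root). If $t$ has a single root $r$ decorated by $d$, let $t_1,\ldots,t_k$ be the partitioned subtrees hanging above $r$, grouped so that each $t_i$ corresponds to one partition class among the children of $r$; since $\tdun{$d$}$ has a unique vertex, Lemma \ref{19} yields $\tdun{$d$}\bullet t_1\cdots t_k=t$. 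Both decompositions strictly decrease the vertex count, so the induction goes through and any Com-Prelie morphism $\phi$ extending $d\mapsto a_d$ is uniquely determined.

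For existence I would define $\phi:\g_{\PT(\D)}\to A$ by the same recursion: $\phi(\tdun{$d$})=a_d$; in the multi-root case $\phi(t_1\shuffle\cdots\shuffle t_m)=\phi(t_1)\shuffle\cdots\shuffle\phi(t_m)$; and in the single-root case $\phi(t)=a_d\bullet\bigl(\phi(t_1)\cdots\phi(t_k)\bigr)$, where the right-hand side uses the Oudom-Guin extension of $\bullet$ to $S(A)$ (well defined for any prelie algebra $A$). Commutativity of $\shuffle$ in $A$ and of the symmetric product in $S(A)$ ensure independence of the ordering chosen for the $t_i$'s, so $\phi$ is well defined on $\PT(\D)$ and we extend it by linearity.

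The remaining task, and the main obstacle, is to verify that $\phi$ is a Com-Prelie morphism. Compatibility with $\shuffle$ is immediate in the multi-root case and, in the single-root case, follows from the Com-Prelie axiom in $A$ applied to the expression $a_d\bullet(\cdots)\shuffle\phi(t')$. Compatibility with $\bullet$ is proved by induction on $|V(t)|$. The multi-root case for $t$ again reduces to the Com-Prelie axiom in $A$. For the single-root case $t=\tdun{$d$}\bullet(t_1\cdots t_k)$, I would expand $t\bullet t'=\sum_{s\in V(t)} t\bullet_s t'$, isolating the term with $s=r$ (which simply appends $t'$ to the list of subtrees, giving $\tdun{$d$}\bullet(t_1\cdots t_k\cdot t')$) from those with $s$ inside some $t_i$ (which substitute $t_i$ by $t_i\bullet t'$, handled by the induction hypothesis). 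Matching the result against $\phi(t)\bullet\phi(t')=\bigl(a_d\bullet\phi(t_1)\cdots\phi(t_k)\bigr)\bullet\phi(t')$ then reduces to the Oudom-Guin identity $(ab)\bullet c=(a\bullet c^{(1)})(b\bullet c^{(2)})$ in $S(A)$, combined with Lemma \ref{19} and the prelie axiom. The combinatorial bookkeeping of grafting sites against the symmetric-algebra coproduct is the technical heart of the argument, but proceeds in strict parallel with the classical Chapoton-Livernet proof that rooted trees form the free prelie algebra.
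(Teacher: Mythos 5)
Your proposal is correct and follows essentially the same route as the paper: both decompose a partitioned tree as a shuffle of single-rooted trees, each written as $\tdun{$d$}\bullet t_1\ldots t_k$ via the Oudom--Guin extension, define $\phi$ by the corresponding recursion, and verify compatibility with $\bullet$ by induction on the number of vertices, splitting the grafting sites between the root and the subtrees and invoking Lemma \ref{19} together with the Com-Prelie axiom for the multi-root case. The only differences are cosmetic (you separate the single-root and multi-root cases where the paper writes one combined formula, and you make the well-definedness of the recursion slightly more explicit).
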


\begin{proof} {\it Unicity.} Let $t\in \T^d$. We denote by $r_1,\ldots,r_n$ its roots. For all $1\leq i\leq n$,
let $t_{i,1},\ldots,t_{i,k_i}$ be the partitioned trees born from $r_i$ and let $d_i$ be the decoration of $r_i$. Then:
$$t=(\tdun{$d_1$}\bullet t_{1,1}\ldots t_{1,k_1} )\shuffle \ldots \shuffle (\tdun{$d_n$} \bullet t_{n,1}\ldots t_{n,k_n}).$$
So $\phi$ is inductively defined by:
\begin{equation}\label{E1}
\phi(t)=(a_{d_1}\bullet\phi(t_{1,1})\ldots \phi(t_{1,k_1}))\shuffle\ldots \shuffle(a_{d_n}\bullet
\phi(t_{n,1})\ldots \phi(t_{n,k_n})).
\end{equation}

{\it Existence.} As the product $\shuffle$ of $A$ is commutative and associative, (\ref{E1}) defines inductively 
a morphism $\phi$ from $\g_{\PT(\D)}$ to $A$. By definition, it is compatible with the product $\shuffle$. Let us prove the compatibility
with the product $\bullet$. Let $t,t'$ be two partitioned trees, let us prove that $\phi(t\bullet t')=\phi(t)\bullet \phi(t')$ by induction on
the number $N$ of vertices of $t$. If $N=1$, then $t=\tdun{$d$}$ and:
$$\phi(t\bullet t')=a_d \bullet \phi(t')=\phi(t)\bullet \phi(t'),$$
by definition of $t'$. If $N>1$, two cases are possible. 

{\it First case.} If $t$ has only one root, then $t=\tdun{$d$} \bullet t_1\ldots t_k$, and:
$$t\bullet t'=\tdun{$d$}\bullet t_1\ldots t_k t'+\sum_{i=1}^k \tdun{$d$} \bullet t_1\ldots t_i \circ t'\bullet t_k.$$
Using the induction hypothesis on $t_1,\ldots,t_k$:
\begin{align*}
\phi(t\bullet t')&=a_d \bullet \phi(t_1)\ldots \phi(t_k) \phi(t')
+\sum_{i=1}^k a_d \bullet \phi(t_1)\ldots \phi(t_1 \circ t')\ldots \phi(t_k)\\
&=a_d \bullet \phi(t_1)\ldots \phi(t_k) \phi(t')
+\sum_{i=1}^k a_d \bullet (\phi(t_1)\ldots \phi(t_1) \circ \phi(t')\ldots \phi(t_k))\\
&=(a_d \bullet \phi(t_1)\ldots \phi(t_k))\bullet \phi(t')\\
&=\phi(t)\bullet \phi(t').
\end{align*}

{\it Second case.} If $t$ has $k>1$ roots, we put $t=t_1\shuffle \ldots \shuffle t_k$. The induction hypothesis holds for $t_1,\ldots,t_k$, so:
\begin{align*}
\phi(t\circ t')&=\sum_{i=1}^k \phi(t_1\shuffle t_i \bullet t'\shuffle \ldots \shuffle t_k)\\
&=\sum_{i=1}^k \phi(t_1)\shuffle \phi(t_i \bullet t')\shuffle \ldots \shuffle \phi(t_k)\\
&=\sum_{i=1}^k \phi(t_1)\shuffle \phi(t_i) \bullet \phi(t')\shuffle \ldots \shuffle \phi(t_k)\\
&=(\phi(t_1)\shuffle \ldots \shuffle \phi(t_k))\bullet \phi(t')\\
&=\phi(t)\bullet \phi(t').
\end{align*}
Hence, $\phi$ is a morphism of Com-Prelie algebras. \end{proof}

\subsection{Presentation of $\B$ as a Com-Prelie algebra}

\begin{prop}\label{21}
As a Com-Prelie algebra, $\B$ is generated by $\emptyset$ and $x_1$.
\end{prop}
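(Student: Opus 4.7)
The plan is to reduce this to Theorem \ref{12}, which already tells us that $(x_1^n)_{n \geq 0}$ is a (minimal) system of prelie generators for $\B$. Since every sub-Com-Prelie algebra is in particular closed under the prelie product $\bullet$, it suffices to show that every $x_1^n$ lies in the sub-Com-Prelie algebra $B \subseteq \B$ generated by $\emptyset$ and $x_1$.

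The crucial observation is that the shuffle product lets us build $x_1^n$ for free from $x_1$ alone. Indeed, since $x_1$ is a single letter, every way of shuffling $n$ copies of $x_1$ produces the same word $x_1^n$, and there are exactly $n!$ such shuffles; hence
$$\underbrace{x_1\shuffle x_1 \shuffle\cdots\shuffle x_1}_{n\text{ factors}}=n!\,x_1^n.$$
Therefore $x_1^n = \frac{1}{n!}\,x_1^{\shuffle n}\in B$ for all $n\geq 1$. Together with the generator $\emptyset = x_1^{0}\in B$, this places every $x_1^n$ inside $B$.

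Now applying Theorem \ref{12}, the prelie subalgebra of $\B$ generated by $\{x_1^n : n\geq 0\}$ is all of $\B$. Since $B$ contains these elements and is closed under $\bullet$, we conclude $B=\B$, proving the proposition. There is essentially no obstacle here: the only point to check is the shuffle computation, and everything else is supplied by Theorem \ref{12}.
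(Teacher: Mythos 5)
Your proof is correct and is essentially identical to the paper's own argument: the paper likewise observes that the Com-Prelie subalgebra generated by $\emptyset$ and $x_1$ contains $x_1^{\shuffle n}=n!\,x_1^n$ and then invokes Theorem \ref{12} to conclude. No issues.
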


\begin{proof} Let $A$ be the Com-Prelie subalgebra of $\B$ generated by $\emptyset$ and $x_1$. For all $n \geq 1$,
it contains $x_1^{\shuffle n}=n! x_1^n$, so it contains $x_1^n$ for all $n \geq 0$. As $\B$ is generated by these elements as a prelie algebra, 
$A=\B$. \end{proof}\\

We denote by $\phi_{CPL}:\g_{\PT(\{1,2\})}\longrightarrow \B$ the unique morphism of Com-Prelie algebras which sends $\tdun{$1$}$
to $\emptyset$ and $\tdun{$2$} $ to $\tdun{$2$}$. By proposition \ref{21}, it is surjective.

\begin{lemma}\label{22}
Let $t_1,\ldots,t_k \in \PT(\{1,2\})$. 
\begin{enumerate}
\item  $\phi_{CPL}(\tdun{$1$}\bullet t_1\ldots t_k)=0$ if $k\geq 1$.
\item $\phi_{CPL}(\tdun{$2$}\bullet t_1\ldots t_k)=0$ if $k\geq 2$. 
\item If $t\in \PT(\{1,2\})$, $\phi_{CPL}(\tdun{$2$}\bullet t)=x_0\phi_{CPL}(t)$.
\end{enumerate} \end{lemma}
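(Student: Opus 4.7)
The plan is to reduce each of the three identities to a computation in $\B$, using that $\phi_{CPL}$ is a morphism of prelie algebras. Specifically, $\phi_{CPL}$ extends multiplicatively to a map $S(\g_{\PT(\{1,2\})})\to S(\B)$ which commutes with the Oudom--Guin extension of $\bullet$; this is a routine induction on the defining recursions, using that the multiplicative extension is automatically a coalgebra morphism. Writing $y_i = \phi_{CPL}(t_i)\in\B$, the three claims therefore reduce to identities in $\B$ (resp.\ in $S(\B)$): (c) $x_1\bullet y = x_0 y$ for all $y\in\B$; (a) $\emptyset\bullet y_1\cdots y_k = 0$ for $k\geq 1$; (b) $x_1\bullet y_1\cdots y_k = 0$ for $k\geq 2$. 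Claim (c) is immediate from Proposition \ref{11}: writing $x_1 = x_1\emptyset$, one gets $x_1\bullet y = x_1(\emptyset\bullet y) + x_0(\emptyset\shuffle y) = x_0 y$, which yields (3).

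Next I would prove (a) by induction on $k$. The base case $k=1$ is Proposition \ref{11}(1). For the inductive step, the Oudom--Guin recursion gives
\[
\emptyset\bullet y_1\cdots y_k = (\emptyset\bullet y_1\cdots y_{k-1})\bullet y_k - \emptyset\bullet(y_1\cdots y_{k-1}\bullet y_k).
\]
The first summand vanishes by induction. Expanding $y_1\cdots y_{k-1}\bullet y_k = \sum_{i=1}^{k-1} y_1\cdots(y_i\bullet y_k)\cdots y_{k-1}$ shows the second summand is $\emptyset\bullet(\text{sum of }(k-1)\text{-fold products})$, again zero by induction. This proves (1).

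Finally, (b) is proved by induction on $k\geq 2$. The base case $k=2$ is the key computation, combining (c) with the identity $(x_0 c)\bullet d = x_0(c\bullet d)$ from Proposition \ref{11}(2):
\[
x_1\bullet y_1 y_2 = (x_1\bullet y_1)\bullet y_2 - x_1\bullet(y_1\bullet y_2) = (x_0 y_1)\bullet y_2 - x_0(y_1\bullet y_2) = 0.
\]
The inductive step for $k\geq 3$ is formally identical to that of (a): both summands in the Oudom--Guin recursion vanish by induction (the second because $y_1\cdots y_{k-1}\bullet y_k$ expands as a sum of $(k-1)$-fold products with $k-1\geq 2$ factors). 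I expect no real obstacle beyond verifying the Oudom--Guin compatibility of $\phi_{CPL}$; the genuine arithmetic content lies in the $k=2$ base case of (b), where applying (c) twice produces matching $x_0(y_1\bullet y_2)$ terms that cancel.
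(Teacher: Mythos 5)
Your proof is correct and follows essentially the same route as the paper's: an induction on $k$ via the Oudom--Guin recursion, with the base case handled by Proposition \ref{11} and the key $k=2$ case of point 2 resting on the cancellation $(x_0y_1)\bullet y_2=x_0(y_1\bullet y_2)=x_1\bullet(y_1\bullet y_2)$. The compatibility of $\phi_{CPL}$ with the extended product $\bullet$ that you flag as the only point needing verification is indeed all that is required, and it is immediate from the recursive formula (\ref{E1}) defining the free Com-Prelie morphism.
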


\begin{proof} We prove 1.-3. by induction on $k$. If $k=1$:
$$\begin{array}{rcl}
\phi_{CPL}(\tdun{$1$}\bullet t)&=\emptyset \bullet \phi_{CPL}(t)&=0,\\
\phi_{CPL}(\tdun{$2$}\bullet t)&=x_1 \bullet \phi_{CPL}(t)&=x_0 \phi_{CPL}(t).
\end{array}$$
Let us assume the results at rank $k-1\geq 1$. Then:
\begin{align*}
\phi_{CPL}(\tdun{$1$} \bullet t_1\ldots t_k)&=\emptyset \bullet \phi_{CPL}(t_1)\ldots \phi_{CPL}(t_k)\\
&=(\emptyset \bullet \phi_{CPL}(t_1)\ldots \phi_{CPL}(t_{k-1}))\bullet \phi_{CPL}(t_k)\\
&-\sum_{i=1}^k \emptyset \bullet \phi_{CPL}(t_1)\ldots \phi_{CPL}(t_i\bullet t_k)\ldots \phi_{CPL}(t_{k-1})\\
&=0,\\
\phi_{CPL}(\tdun{$2$} \bullet t_1\ldots t_k)&=x_1 \bullet \phi_{CPL}(t_1)\ldots \phi_{CPL}(t_k)\\
&=(x_1 \bullet \phi_{CPL}(t_1)\ldots \phi_{CPL}(t_{k-1}))\bullet \phi_{CPL}(t_k)\\
&-\sum_{i=1}^k x_1 \bullet \phi_{CPL}(t_1)\ldots \phi_{CPL}(t_i\bullet t_k)\ldots \phi_{CPL}(t_{k-1}).
\end{align*}
If $k\geq 3$, the induction hypothesis immediately allows to conclude that $\phi_{CPL}(\tdun{$2$} \bullet t_1\ldots t_k)=0-0=0$.
If $k=2$, this gives:
\begin{align*}
\phi_{CPL}(\tdun{$2$} \bullet t_1t_2)&=(x_1 \bullet \phi_{CPL}(t_1))\bullet \phi_{CPL}(t_2)-x_1\bullet \phi_{CPL}
(t_1\bullet t_2)\\
&=(x_0\phi_{CPL}(t_1))\bullet \phi_{CPL}(t_2)-x_0\phi_{CPL}(t_1\bullet t_2)\\
&=x_0\left(\phi_{CPL}(t_1)\bullet \phi_{CPL}(t_2))\phi_{CPL}(t_1\bullet t_2)\right)\\
&=0.
\end{align*}
Hence, the result holds for all $k\geq 1$. \end{proof}

\begin{lemma} \label{23}
For all $t\in \PT(\{1,2\})$, $\phi_{CPL}(t)$ is a linear span of words of length the number of vertices of $t$ decorated by $2$.
\end{lemma}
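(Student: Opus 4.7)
The plan is to proceed by induction on the number $N$ of vertices of $t$, writing $d_2(t)$ for the number of vertices of $t$ decorated by $2$. In the base case $N=1$, either $t=\tdun{$1$}$, in which case $d_2(t)=0$ and $\phi_{CPL}(t)=\emptyset$ is a word of length $0$, or $t=\tdun{$2$}$, in which case $d_2(t)=1$ and $\phi_{CPL}(t)=x_1$ is a word of length $1$; both cases are fine.

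For the inductive step, I would exploit the decomposition used in the existence part of the universal property theorem: any partitioned tree $t$ with roots $r_1,\ldots,r_n$ (all in one partition class, since $t$ is a tree) can be written as
$$t=s_1\shuffle\cdots\shuffle s_n,\qquad s_i=\tdun{$d_i$}\bullet t_{i,1}\ldots t_{i,k_i},$$
where $d_i$ is the decoration of $r_i$ and $t_{i,1},\ldots,t_{i,k_i}$ are the partitioned trees born from $r_i$. Since the shuffle product on words sends words of lengths $\ell$ and $\ell'$ to linear combinations of words of length $\ell+\ell'$, and since $d_2(t)=\sum_i d_2(s_i)$, the identity
$$\phi_{CPL}(t)=\phi_{CPL}(s_1)\shuffle\cdots\shuffle \phi_{CPL}(s_n)$$
reduces the statement for $t$ to the same statement for each single-rooted tree $s_i$.

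For a single-rooted $s_i$, I case-split on $d_i$ and $k_i$ and invoke Lemma \ref{22}. If $k_i=0$, then $s_i$ is a single vertex, already handled. If $d_i=1$ and $k_i\geq 1$, then $\phi_{CPL}(s_i)=0$, which lies trivially in any linear span of words. If $d_i=2$ and $k_i\geq 2$, again $\phi_{CPL}(s_i)=0$. The only substantial case is $d_i=2$ and $k_i=1$: then $\phi_{CPL}(s_i)=x_0\,\phi_{CPL}(t_{i,1})$, and by the induction hypothesis applied to the strictly smaller $t_{i,1}$, the element $\phi_{CPL}(t_{i,1})$ is a linear combination of words of length $d_2(t_{i,1})=d_2(s_i)-1$, so prepending $x_0$ yields words of length $d_2(s_i)$.

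There is no real obstacle here: the argument is bookkeeping once Lemma \ref{22} is available. The only mildly delicate point is making sure the decomposition into $s_1,\ldots,s_n$ really partitions the vertices of $t$ and respects $d_2$ additively, and that the zero vector appearing in the degenerate cases is unproblematic (it lies in any prescribed linear span of words).
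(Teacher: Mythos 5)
Your proof is correct and follows essentially the same route as the paper's: induction on the number of vertices, splitting $t$ into single-rooted trees via $\shuffle$ (which adds word lengths), and using Lemma \ref{22} to reduce the single-rooted case to $\phi_{CPL}(\tdun{$2$}\bullet t_1)=x_0\,\phi_{CPL}(t_1)$, with the degenerate cases giving $0$. The paper merely organizes the same two steps as separate cases (one root versus several roots) rather than as a single decomposition followed by a case split.
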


\begin{proof} By induction on the number of vertices $N$ of $t$. If $N=1$, then $t=\tdun{$1$}$ or $\tdun{$2$}$ and the result is obvious.
Let us assume the result at all rank $<N$. \\

{\it First case.} If $t$ has only one root, we put $t=\tdun{$i$}\bullet t_1\ldots t_k$. By the preceding lemma, we can assume that $i=2$ and $k=1$.
Then $\phi_{CPL}(t)=x_0\phi_{CPL}(t_1)$ and the result is obvious.\\

{\it Second case.} If $t$ has $k>1$ roots, we put $t=t_1\shuffle \ldots \shuffle t_k$. Then $\phi_{CPL}(t_1)$ is equal to
$\phi_{CPL}(t_1)\shuffle \ldots \shuffle \phi_{CPL}(t_k)$ and the result is immediate. \end{proof}

\begin{lemma}
We define inductively a family $F$ of elements of $\PT(\{1,2\})$ by:
\begin{enumerate}
\item $F(1)=\{\tdun{$1$},\tdun{$2$}\}$.
\item $\displaystyle F(n+1)=(\tdun{$2$}\bullet F(n))\cup \bigcup_{i=1}^{n} (F(i) \shuffle F(n+1-i))$.
\item $\displaystyle F=\bigcup_{n\geq 1} F(n)$.
\end{enumerate}
Let $t\in \PT(\{1,2\})$. Then $\phi_{CPL}(t) \neq 0$ if, and only if, $t\in F$.
\end{lemma}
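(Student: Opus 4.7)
The plan is to prove both implications by induction, using the canonical decomposition of a partitioned tree: either it has a single root (in which case it has the form $\tdun{$d$}\bullet(t_1\ldots t_k)$, where $k$ counts the partition parts among the root's children, in the sense of Lemma~\ref{19}), or it has $m\geq 2$ roots (in which case it is a shuffle $s_1\shuffle\cdots\shuffle s_m$ of one-rooted trees).

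For the ``$\Leftarrow$'' direction, I would induct on the integer $n$ with $t\in F(n)$. The base case $n=1$ is immediate, since $\phi_{CPL}(\tdun{$1$})=\emptyset$ and $\phi_{CPL}(\tdun{$2$})=x_1$ are nonzero. For the inductive step, when $t=\tdun{$2$}\bullet t'$ with $t'\in F(n)$, Lemma~\ref{22}(3) gives $\phi_{CPL}(t)=x_0\,\phi_{CPL}(t')$, which is nonzero because left-concatenation by $x_0$ is injective on $\B$. When $t=t_1\shuffle t_2$ with $t_1,t_2\in F$, we have $\phi_{CPL}(t)=\phi_{CPL}(t_1)\shuffle\phi_{CPL}(t_2)$, and I would appeal to Radford's theorem that $(\B,\shuffle)$ is isomorphic to a polynomial algebra on the Lyndon words, hence an integral domain; both factors being nonzero by induction, so is the product.

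For the ``$\Rightarrow$'' direction I would induct on the number $N$ of vertices of $t$. For $N=1$, trivially $t\in\{\tdun{$1$},\tdun{$2$}\}=F(1)$. For $N>1$, apply the canonical decomposition. If $t$ has $m\geq 2$ roots, write $t=s_1\shuffle\cdots\shuffle s_m$; integrality of $\shuffle$ forces each $\phi_{CPL}(s_j)$ to be nonzero, the induction hypothesis places each $s_j$ in $F$, and since $F$ is stable under binary $\shuffle$, iterated shuffles of $F$-elements lie in $F$, so $t\in F$. If $t$ has a single root decorated $d$, write $t=\tdun{$d$}\bullet(t_1\ldots t_k)$: Lemma~\ref{22}(1) forbids $d=1$ when $k\geq 1$ (which must hold since $N>1$), and Lemma~\ref{22}(2) forbids $k\geq 2$, leaving $d=2$ and $k=1$. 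Then $\phi_{CPL}(t)=x_0\,\phi_{CPL}(t_1)\neq 0$ forces $\phi_{CPL}(t_1)\neq 0$; the induction hypothesis gives $t_1\in F$, and $t=\tdun{$2$}\bullet t_1\in F$.

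The one substantive external ingredient is the integral-domain property of $(\B,\shuffle)$, needed to deduce that every factor in a multi-root shuffle decomposition is individually nonzero. Beyond that, the argument is pure bookkeeping: the two recursive clauses defining $F$ correspond exactly to the two cases (grafting above $\tdun{$2$}$ and multi-root shuffle) of the canonical decomposition, while the vanishing identities of Lemma~\ref{22} eliminate all other configurations.
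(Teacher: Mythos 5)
Your proof is correct and follows essentially the same route as the paper's: induction on the number of vertices for the forward implication (using Lemma \ref{22} to force the single-root case into the form $\tdun{$2$}\bullet t_1$ with $k=1$, and the shuffle decomposition for the multi-root case), and induction on $n$ for the converse. The one point where you go beyond the paper is in explicitly justifying that a shuffle product of two nonzero elements of $\B$ is nonzero via Radford's theorem that $(\B,\shuffle)$ is a polynomial algebra, hence an integral domain --- a step the paper's converse direction uses silently; this is a welcome addition rather than a divergence.
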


\begin{proof} $\Longrightarrow$. We proceed by induction on the number $N$ of vertices of $t$. This is obvious if $N=1$.
Let us assume the result at all rank $<N$. \\

{\it First case.} If $N$ has only one root, we put $N=\tdun{$i$}\bullet t_1\ldots t_k$. By lemma \ref{22}, $i=2$ and $k=1$.
Then $\phi_{CPL}(t)=x_0 \phi_{CPL}(t_1)$. By the induction hypothesis, $t_1 \in F$, so $t\in F$.\\

{\it Second case.} If $N$ has $k>N$ roots, we put $t=t_1\shuffle \ldots \shuffle t_k$. Then:
$$\phi_{CPL}(t)=\phi_{CPL}(t_1)\shuffle \phi_{CPL}(t_2\shuffle \ldots \shuffle t_k)\neq 0,$$
so by the induction hypothesis, $t_1$ and $t_2\shuffle \ldots \shuffle t_k \in F$, and $t\in F$. \\

$\Longleftarrow$. Let $t\in T(n)$. We proceed by induction on $n$. It $n=1$, this is obvious. If $n>1$ then $t=\tdun{$2$}\bullet t'$, with $t'\in F(n-1)$, 
or $t=t'\shuffle t''$, with $t'\in F(i)$, $t''\in F(n-i)$. In the first case, by the induction hypothesis, $\phi_{CPL}(t')\neq 0$ and
$\phi_{CPL}(t)=x_0 \phi_{CPL}(t')\neq 0$. In the second case, $\phi_{CPL}(t'),\phi_{CPL}(t'')\neq 0$ by the induction hypothesis,
so $\phi_{CPL}(t)=\phi_{CPL}(t')\shuffle \phi_{CPL}(t'')\neq 0$. \end{proof}\\

{\bf Examples}.
\begin{align*}
F(1)&=\{\tdun{$1$},\tdun{$2$}\},\\
F(2)&=\{\tddeux{$2$}{$1$},\tddeux{$2$}{$2$},\hddeux{$1$}{$1$},\hddeux{$1$}{$2$},\hddeux{$2$}{$2$}\},\\
F(3)&=\left\{\tdtroisdeux{$2$}{$2$}{$1$},\tdtroisdeux{$2$}{$2$}{$2$},\hdtroisun{$2$}{$1$}{$1$},
\hdtroisun{$2$}{$2$}{$1$},\hdtroisun{$2$}{$2$}{$2$},\hdtroisdeux{$1$}{$2$}{$1$},
\hdtroisdeux{$2$}{$2$}{$1$},\hdtroisdeux{$1$}{$2$}{$2$},\hdtroisdeux{$2$}{$2$}{$2$},
\hdtroisquatre{$1$}{$1$}{$1$},\hdtroisquatre{$2$}{$1$}{$1$},\hdtroisquatre{$2$}{$2$}{$1$},\hdtroisquatre{$2$}{$2$}{$2$}\right\}.
\end{align*}

We define a second family of elements of $\PT(\{1,2\})$ in the following way:
\begin{enumerate}
\item $F'(1)=\{\tdun{$1$},\tdun{$2$}\}$.
\item $F'(2)=\{\tddeux{$2$}{$2$},\tddeux{$2$}{$1$},\hddeux{$2$}{$2$}\}$.
\item $\displaystyle F'(n+1)=(\tdun{$2$}\bullet F'(n))\cup \bigcup_{i=2}^{n-1}  \left(F'(i) \shuffle F'(n+1-i)\right)
\cup \left(\tdun{$2$}\shuffle F'(n)\right)$ if $n \geq 2$.
\item $\displaystyle F'=\bigcup_{n\geq 1} F'(n)$.
\end{enumerate}
For example:
\begin{align*}
F'(3)&=\left\{\tdtroisdeux{$2$}{$2$}{$1$},\tdtroisdeux{$2$}{$2$}{$2$},\hdtroisun{$2$}{$2$}{$2$},
\hdtroisdeux{$2$}{$2$}{$1$},\hdtroisdeux{$2$}{$2$}{$2$},\hdtroisquatre{$2$}{$2$}{$2$}\right\},\\
F'(4)&=\left\{\tdquatrecinq{$2$}{$2$}{$2$}{$1$},\tdquatrecinq{$2$}{$2$}{$2$}{$2$},
\hdquatresix{$2$}{$2$}{$2$}{$2$},\hdquatrecinq{$2$}{$2$}{$1$}{$2$},\hdquatrecinq{$2$}{$2$}{$2$}{$2$},
\hdquatretrois{$2$}{$2$}{$2$}{$2$},\hdquatredouze{$2$}{$2$}{$2$}{$2$},
\hdquatredix{$2$}{$2$}{$2$}{$1$},\hdquatredix{$2$}{$2$}{$2$}{$2$},
\hdquatretreize{$2$}{$1$}{$2$}{$1$},\hdquatretreize{$2$}{$1$}{$2$}{$2$},\hdquatretreize{$2$}{$2$}{$2$}{$2$},
\hdquatreseize{$2$}{$2$}{$2$}{$1$},\hdquatreseize{$2$}{$2$}{$2$}{$2$},
\hdquatredixsept{$2$}{$2$}{$2$}{$2$}\right\}.
\end{align*}
We define a map $\pi$ from $F$ to $\PT(\{1,2\})$ in the following way:
\begin{enumerate}
\item $\pi(\tdun{$i$})=\tdun{$i$}$ if $i=1,2$.
\item $\pi(\tdun{$1$}\shuffle \ldots \shuffle\tdun{$1$})=\tdun{$1$}$.
\item If $t=\tdun{$1$}\shuffle \ldots \shuffle \tdun{$1$} \shuffle t_1\shuffle \ldots \shuffle t_k$, $k \geq 1$, with $t_1,\ldots,t_k \neq \tdun{$1$}$,
then $\pi(t)=\pi(t_1)\shuffle \ldots \shuffle \pi(t_k)$.
\item If $t=\tdun{$2$} \bullet t_1\ldots t_k$, then $\pi(t)=\tdun{$2$}\bullet \pi(t_1)\ldots \pi(t_k)$.
\end{enumerate}

\begin{lemma}
$\pi$ is a projection on $F'$ and $\phi_{CPL}\circ \pi={\phi_{CPL}}_{\mid F}$.
\end{lemma}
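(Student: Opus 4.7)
The plan is to prove both assertions---$\pi(F)\subseteq F'$ with $\pi|_{F'}=\mathrm{Id}_{F'}$, and $\phi_{CPL}\circ \pi=\phi_{CPL}|_{F}$---jointly by induction on the number $N$ of vertices of $t$. The base case $N=1$ is immediate since $F(1)=F'(1)=\{\tdun{$1$},\tdun{$2$}\}$ and clause~1 of the definition of $\pi$ fixes both elements.

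For the inductive step I distinguish trees $t\in F(N)$ according to whether $t$ has one or several roots. If $t$ has a single root, Lemma~\ref{22}(1) forces its decoration to be $2$ (a root decorated by $1$ carrying children would give $\phi_{CPL}(t)=0$), so $t=\tdun{$2$}\bullet s$ for a unique $s\in F(N-1)$. Clause~4 of $\pi$ yields $\pi(t)=\tdun{$2$}\bullet\pi(s)$; by induction $\pi(s)\in F'(N-1)$, hence $\pi(t)\in\tdun{$2$}\bullet F'(N-1)\subseteq F'(N)$, and Lemma~\ref{22}(3) together with the induction hypothesis gives
\[\phi_{CPL}(\pi(t))=x_0\,\phi_{CPL}(\pi(s))=x_0\,\phi_{CPL}(s)=\phi_{CPL}(t).\]

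If $t$ has several roots, its root partition class decomposes as $\alpha\ge 0$ isolated copies of $\tdun{$1$}$ together with trees $t_1,\ldots,t_k$ whose roots are decorated by $2$, because any root decorated by $1$ and carrying a nonempty subtree would force $\phi_{CPL}(t)=0$ via Lemma~\ref{22}(1). Each $t_i\in F$ has fewer vertices than $t$. When $k\ge 1$, clause~3 gives $\pi(t)=\pi(t_1)\shuffle\cdots\shuffle\pi(t_k)$, and when $k=0$, clause~2 gives $\pi(t)=\tdun{$1$}$. Since $\phi_{CPL}(\tdun{$1$})=\emptyset$ is the unit for $\shuffle$ in $\B$, the induction hypothesis yields $\phi_{CPL}(\pi(t))=\phi_{CPL}(t)$ in both subcases. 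Moreover each $\pi(t_i)\in F'$ still has its root decorated by $2$, so it is either $\tdun{$2$}$ or an element of $F'(m)$ with $m\ge 2$; iterating the shuffle clauses $\tdun{$2$}\shuffle F'(n)$ and $F'(i)\shuffle F'(n+1-i)$ with $i\ge 2$ in the recursion defining $F'$ places $\pi(t)$ in $F'$.

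It remains to show $\pi|_{F'}=\mathrm{Id}_{F'}$. A parallel induction on $N$ using the defining clauses of $F'$ shows that no element of $F'\setminus\{\tdun{$1$}\}$ features a $\tdun{$1$}$ in its root-shuffle decomposition, since only $\tdun{$2$}\bullet F'(n)$, $\tdun{$2$}\shuffle F'(n)$ and $F'(i)\shuffle F'(j)$ with $i,j\ge 2$ ever occur. Clauses~1, 3 and~4 of $\pi$ therefore reduce to the identity on $F'$ by the induction hypothesis. The main bookkeeping step is checking that arbitrary shuffles of $F'$-elements of various sizes really can be assembled via the allowed clauses of $F'$; the isolated clause $\tdun{$2$}\shuffle F'(n)$ is tailored precisely to handle the case where one of the $\pi(t_i)$ equals the single vertex $\tdun{$2$}$, which would otherwise fall outside the range $i\ge 2$ of the generic shuffle clause.
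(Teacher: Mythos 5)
Your proof is correct and follows essentially the same route as the paper's: induction on the number of vertices, a case split on the root decomposition of $t$, and lemma \ref{22} to force every non-trivial root component of an element of $F$ to have the form $\tdun{$2$}\bullet s$. The only substantive difference is that where the paper carries the auxiliary inductive invariant ``$\pi(t)=\tdun{$1$}$ only if $t=\tdun{$1$}^{\shuffle N}$'' to exclude $\tdun{$1$}$ factors after applying $\pi$, you argue directly that each $\pi(t_i)$ keeps its root decorated by $2$; both devices work, and your explicit remark on the role of the clause $\tdun{$2$}\shuffle F'(n)$ is a welcome addition.
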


\begin{proof} Let $t\in F$. Let us prove by induction on the number $N$ of vertices of $t$ that:
\begin{enumerate}
\item $\pi(t)\in F'$.
\item If $t\in F'$, $\pi(t)=t$.
\item $\phi_{CPL} \circ \pi(t)=\phi_{CPL}(t)$.
\item If $\pi(t)=\tdun{$1$}$, then $t=\tdun{$1$}^{\shuffle N}$.
\end{enumerate}
All these points are immediate if $N=1$. Let us assume the result at all ranks $<N$, $N\geq 2$.
We put  $t=\tdun{$1$}\shuffle \ldots \shuffle \tdun{$1$} \shuffle t_1\shuffle \ldots \shuffle t_k$, $k \geq 0$, with $t_1,\ldots,t_k \neq \tdun{$1$}$.\\

{\it First case.} If $k \geq 2$, then $\pi(t)=\pi(t_1)\shuffle \ldots \shuffle \pi(t_k)$. Following the induction hypothesis, 
$\pi(t_1),\ldots,\pi(t_k)\in F'$ and are not equal to $\tdun{$1$}$, so $\pi(t)\in F'$: moreover, $\pi(t_1)\neq \tdun{$1$}$, so $\pi(t)
\neq \tdun{$1$}$.
\begin{align*}
\phi_{CPL}(t)&=\phi_{CPL}(\tdun{$1$})\shuffle\ldots \shuffle \phi_{CPL}(\tdun{$1$})\shuffle
\phi_{CPL}(t_1)\shuffle \ldots \shuffle \phi_{CPL}(t_k)\\
&=\emptyset \shuffle\ldots \shuffle \emptyset \shuffle \phi_{CPL}\circ \pi(t_1)\shuffle \ldots \shuffle \phi_{CPL}\circ \pi(t_k)\\
&=\phi_{CPL}(\pi(t_1)\shuffle \ldots \shuffle \pi(t_k))\\
&=\phi_{CPL}\circ \pi(t).
\end{align*}
If $t\in F'$, necessarily $t=t_1\shuffle \ldots \shuffle t_k$, and $t_1,\ldots,t_k \in F'$. By the induction hypothesis,
$\pi(t_1)=t_1,\ldots,\pi(t_k)=t_k$, so $\pi(t)=t$.  \\

{\it Second case.} If $k=1$, as $t_1\in F$, we put $t_1=\tdun{$2$}\bullet s$. Then $\pi(t)=\tdun{$2$}\bullet \pi(s)$.
By the induction hypothesis, $\pi(s) \in F'$, so $\pi(t)=F'$. Moreover:
\begin{align*}
\phi_{CPL}(t)&=\phi_{CPL}(\tdun{$1$})\shuffle\ldots \shuffle \phi_{CPL}(\tdun{$1$})\shuffle
(\phi_{CPL}(\tdun{$2$})\bullet \phi_{CPL}(s))\\
&=\emptyset\shuffle\ldots \shuffle \emptyset \shuffle (\phi_{CPL}(\tdun{$2$})\bullet \phi_{CPL}(s))\\
&=\phi_{CPL}\circ \pi(\tdun{$2$})\bullet \phi_{CPL}\circ \pi(s)\\
&=\phi_{CPL}\circ \pi(t).
\end{align*}
If $t'\in F'$, then $s \in F'$, and $t=\tdun{$2$}\bullet s$. Then $\pi(t)=\tdun{$2$}\bullet \pi(s)=\tdun{$2$}\bullet s=t$. \\

{\it Last case.} If $k=0$, all the results are obvious. \end{proof}

\begin{lemma}\label{26}
Let $t,t'\in \PT(\{1,2\})$. Then:
$$\phi_{CPL}\left((\tdun{$2$} \bullet t) \shuffle (\tdun{$2$} \bullet t')\right)=
\phi_{CPL}\left(\tdun{$2$}\bullet((\tdun{$2$}\bullet t) \shuffle t'+t\shuffle (\tdun{$2$}\bullet t'))\right).$$
\end{lemma}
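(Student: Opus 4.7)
I would just compute both sides under $\phi_{CPL}$ and compare. Set $a = \phi_{CPL}(t)$ and $b = \phi_{CPL}(t')$. The morphism property of $\phi_{CPL}$ as a Com-Prelie map, combined with the fact that $\phi_{CPL}(\tdun{$2$}) = x_1$, gives $\phi_{CPL}(\tdun{$2$}\bullet u) = x_1 \bullet \phi_{CPL}(u)$ for any $u \in \g_{\PT(\{1,2\})}$. By proposition \ref{11} applied with $c=\emptyset$, we have $x_1 \bullet d = x_1(\emptyset \bullet d) + x_0(\emptyset \shuffle d) = x_0 d$ for every $d \in \B$. So $\phi_{CPL}(\tdun{$2$}\bullet u) = x_0 \phi_{CPL}(u)$.

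For the left hand side, $\phi_{CPL}((\tdun{$2$}\bullet t)\shuffle(\tdun{$2$}\bullet t')) = (x_0 a)\shuffle (x_0 b)$, and the recursive rule for the shuffle product gives
\begin{equation*}
(x_0 a)\shuffle (x_0 b) = x_0\bigl(a \shuffle (x_0 b)\bigr) + x_0\bigl((x_0 a)\shuffle b\bigr).
\end{equation*}

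For the right hand side, applying $\phi_{CPL}$ and the identity above to each summand yields
\begin{equation*}
\phi_{CPL}\bigl(\tdun{$2$}\bullet((\tdun{$2$}\bullet t)\shuffle t')\bigr) = x_0\bigl((x_0 a)\shuffle b\bigr), \qquad \phi_{CPL}\bigl(\tdun{$2$}\bullet(t\shuffle(\tdun{$2$}\bullet t'))\bigr) = x_0\bigl(a \shuffle (x_0 b)\bigr),
\end{equation*}
using that $\phi_{CPL}$ intertwines $\shuffle$ with $\shuffle$. Summing the two gives exactly the expression obtained for the left hand side, so the identity holds.

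There is no real obstacle: the key point is simply that the Com-Prelie morphism property reduces everything to the identity $x_1 \bullet d = x_0 d$ in $\B$, after which the claim is the defining recursion of the shuffle product of two words beginning with $x_0$. The only thing to be careful with is that $\tdun{$2$}\bullet u$ on the right hand side really is the prelie product in $\g_{\PT(\{1,2\})}$ applied to a single element, so the morphism property applies cleanly without needing the Guin-Oudom extension.
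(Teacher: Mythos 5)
Your proof is correct and follows essentially the same route as the paper: both reduce to the identity $\phi_{CPL}(\tdun{$2$}\bullet u)=x_0\,\phi_{CPL}(u)$ (which the paper quotes from lemma \ref{22} and you rederive from proposition \ref{11}) and then apply the defining recursion of the shuffle of two series beginning with $x_0$. No gaps.
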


\begin{proof} Indeed, putting $w=\phi_{CPL}(t)$ and $w'=\phi_{CPL}(t')$:
\begin{align*}
\phi_{CPL}\left((\tdun{$2$} \bullet t) \shuffle (\tdun{$2$} \bullet t')\right)&=x_0w \shuffle x_0w'\\
&=x_0 (w \shuffle x_0w')+x_0(x_0w \shuffle w')\\
&=\phi_{CPL}\left(\tdun{$2$}\bullet((\tdun{$2$}\bullet t) \shuffle t'+t\shuffle (\tdun{$2$}\bullet t'))\right).
\end{align*}
We used lemma \ref{22} for the first and third equalities. \end{proof}

\begin{theo}\label{27}
The kernel of $\phi_{CPL}$ is the Com-Prelie ideal generated by the elements:
\begin{enumerate}
\item $\tdun{$1$}\bullet t_1\ldots t_k$, where $k\geq 1$, $t_1,\ldots,t_k \in \PT(\{1,2\})$.
\item $\tdun{$2$}\bullet t_1\ldots t_k$, where $k\geq 2$, $t_1,\ldots,t_k \in \PT(\{1,2\})$.
\item $\tdun{$1$} \shuffle t-t$, where $t\in \PT(\{1,2\})$.
\item $(\tdun{$2$} \bullet t) \shuffle (\tdun{$2$} \bullet t')
-\tdun{$2$}\bullet((\tdun{$2$}\bullet t) \shuffle t'-t\shuffle (\tdun{$2$}\bullet t'))$, where $t,t'\in \PT(\{1,2\})$.
\end{enumerate}\end{theo}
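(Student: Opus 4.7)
The plan is to establish both inclusions of $\ker \phi_{CPL} = J$, where $J$ denotes the Com-Prelie ideal generated by the four listed families. The forward inclusion $J \subseteq \ker \phi_{CPL}$ is essentially immediate from the earlier work: generators of type 1 and type 2 vanish under $\phi_{CPL}$ by Lemma~22 (interpreted on the partitioned-tree side via Lemma~19), type 3 holds because $\phi_{CPL}(\tdun{$1$}) = \emptyset$ is the unit of $\shuffle$, and type 4 is exactly Lemma~26. Since $\ker \phi_{CPL}$ is automatically a Com-Prelie ideal, it contains the generated ideal $J$.

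For the reverse inclusion, the strategy is a two-stage reduction to $\mathrm{span}(F')$. First, I would show by structural induction on the number of vertices that every partitioned tree $t \notin F$ lies in $J$: decompose $t$ as $\tdun{$i$} \bullet (t_1 \cdots t_k)$ when $t$ has a single root or as $t_1 \shuffle t_2$ when the roots split into several trees of $\PT$. If $i=1$ with $k\geq 1$ or $i=2$ with $k\geq 2$, then $t$ is literally a generator of $J$; otherwise $t = \tdun{$2$} \bullet t_1$ with $t_1 \notin F$, and the induction hypothesis together with stability of $J$ under $\bullet$ (and similarly $\shuffle$ in the multi-root case) completes the step. Second, for $t \in F$, show $t \equiv \pi(t) \pmod J$ by induction along the recursion defining $\pi$ in Lemma~25: relation 3 absorbs the root-level copies of $\tdun{$1$}$, and the recursive clause $\pi(\tdun{$2$} \bullet t_1 \ldots t_k) = \tdun{$2$} \bullet \pi(t_1) \ldots \pi(t_k)$ reduces immediately to the induction hypothesis via ideal closure.

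At this point, every element of $\g_{\PT(\{1,2\})}$ is congruent mod $J$ to one in $\mathrm{span}(F')$, and what remains is analysis of the induced map to $\B$. The main obstacle is that $\phi_{CPL}$ is not injective on $\mathrm{span}(F')$: Lemma~26 already exhibits genuine elements of $\ker \phi_{CPL} \cap \mathrm{span}(F')$ of the form $(\tdun{$2$}\bullet t)\shuffle(\tdun{$2$}\bullet t') - \tdun{$2$}\bullet(\cdots)$, and a direct count at degree 3 shows $|F'|$ exceeds $\dim \B_3 = 8$. The plan is to use relation 4 as a rewriting rule, iteratively replacing shuffles of two $\tdun{$2$}$-rooted trees by a single $\tdun{$2$}$-rooted tree, so as to extract a normal-form subset $F'' \subseteq F'$ which $\phi_{CPL}$ maps bijectively onto the word basis of $\B$. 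The cleanest way to verify this is likely a Hilbert series comparison: using the bigradation with $\dim V_{k,n} = \binom{n}{k-n-1}$ from Proposition~8 together with $\dim \B_n = 2^n$, a count of the normal forms matches the target dimensions and forces $\phi_{CPL}$ to be injective on $\mathrm{span}(F'')$, completing the proof that $\ker \phi_{CPL} \subseteq J$.
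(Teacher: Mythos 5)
Your overall architecture mirrors the paper's proof: forward inclusion via Lemmas \ref{22} and \ref{26} plus the fact that $\phi_{CPL}(\tdun{$1$})=\emptyset$ is the unit for $\shuffle$; then reduction modulo the ideal first to $Vect(F)$ using generators 1 and 2, then to $Vect(F')$ via the projection $\pi$ and generators 3, then to a normal-form family $F''$ via relation 4; and finally surjectivity plus a dimension count to force injectivity of the induced map. All of that skeleton is sound and is exactly what the paper does.

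The gap is in the last step, which is where the real content of the proof lives. You assert that ``a count of the normal forms matches the target dimensions,'' but you neither specify $F''$ nor perform the count, and the tool you propose --- the bigradation $\dim V_{k,n}=\binom{n}{k-n-1}$ of Proposition \ref{8} --- is not the relevant one. The grading that makes the comparison work is by the number of vertices decorated by $2$: by Lemma \ref{23}, a tree whose vertices decorated by $2$ number $n$ maps to a combination of words of length $n$, and since elements of $F''$ have at most one vertex decorated by $1$, the elements of $F''$ mapping into the $2^n$-dimensional space of words of length $n$ are those of $F''_2(n)$ (trees with $n$ vertices, none decorated by $1$) together with those of $F''_1(n+1)$ (trees with $n+1$ vertices, exactly one decorated by $1$). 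One must then prove, by induction on the recursive definition $F''(n+1)=\bigcup_{i} T''(i)\shuffle \tdun{$2$}^{\shuffle (n+1-i)}$ with $T''(i+1)=\tdun{$2$}\bullet F''(i)$, the two bounds $|F''_2(n)|\leq 2^{n-1}$ and $|F''_1(n+1)|\leq 2^{n-1}$, so that the relevant span has dimension at most $2^n$ and injectivity follows from surjectivity. Without defining $F''$ and establishing these bounds, the ``Hilbert series comparison'' asserts the conclusion rather than proving it, and a count against the $(k,n)$-bigradation of $V$ would in any case founder on the mismatch between vertex count and word length caused by the $1$-decorated vertices.
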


\begin{proof} Let $I$ be the ideal generated by these elements. Lemmas \ref{22} and \ref{26}  prove that the elements 1, 2  and 4 belong to 
$Ker(\phi_{CPL})$. Moreover, for all $t \in \PT(\{1,2\})$, $\pi(\tdun{$1$}\shuffle t)=\pi(t)$. For all $t\in \PT(\{1,2\})$:
$$\phi_{CPL}(\tdun{$1$}\shuffle t)=\emptyset \shuffle \phi_{CPL}(t)=\phi_{CPL}(t),$$
so elements 3. also belong to $Ker(\phi_{CPL})$. Hence, $I\subseteq Ker(\phi_{CPL})$. \\

Let $h=\g_{\PT(\{1,2\})}/I$. As the elements 1 and 2 belong to $I$, $h$ is linearly spanned by the elements $\overline{t}$, $t\in F$.
As the elements 3 belong to $I$, for all $t\in F$, $\overline{\pi(t)}=\overline{t}$. As $\pi$ is a projection on $F'$, $h$ is linearly spanned 
by the elements $\overline{t}$, $t\in F'$. \\

We now define inductively two families of partitionned trees in the following way:
\begin{enumerate}
\item $T''(1)=\{\tdun{$2$}\}$ and $F''(1)=\{\tdun{$1$},\tdun{$2$}\}$.
\item $T''(n+1)=\tdun{$2$}\bullet F''(n)$.
\item $\displaystyle F''(n+1)=\bigcup_{i=1}^{n+1} T''(i)\shuffle \tdun{$2$}^{\shuffle(n+1-i)}$.
\item $\displaystyle F''=\bigcup_{n\geq 1} F''(n)$.
\end{enumerate}
For example:
\begin{align*}
F''(3)&=\left\{\tdtroisdeux{$2$}{$2$}{$1$},\tdtroisdeux{$2$}{$2$}{$2$},\hdtroisun{$2$}{$2$}{$2$},
\hdtroisdeux{$2$}{$2$}{$1$},\hdtroisdeux{$2$}{$2$}{$2$},\hdtroisquatre{$2$}{$2$}{$2$}\right\},\\
F''(4)&=\left\{\tdquatrecinq{$2$}{$2$}{$2$}{$1$},\tdquatrecinq{$2$}{$2$}{$2$}{$2$},
\hdquatresix{$2$}{$2$}{$2$}{$2$},\hdquatrecinq{$2$}{$2$}{$1$}{$2$},\hdquatrecinq{$2$}{$2$}{$2$}{$2$},
\hdquatretrois{$2$}{$2$}{$2$}{$2$},\hdquatredouze{$2$}{$2$}{$2$}{$2$},
\hdquatredix{$2$}{$2$}{$2$}{$1$},\hdquatredix{$2$}{$2$}{$2$}{$2$},
\hdquatreseize{$2$}{$2$}{$2$}{$1$},\hdquatreseize{$2$}{$2$}{$2$}{$2$},
\hdquatredixsept{$2$}{$2$}{$2$}{$2$}\right\}.
\end{align*}
Let us prove that for all $t\in F'$, there exists $t' \in Vect(F'')$ such that $\overline{t}=\overline{t'}$. We proceed by induction on the number 
$N$ of vertices of $t$. If $N=1$, then $t=\tdun{$1$} $ or $\tdun{$2$}$ and we take $t'=t$. Let us assume the result at all rank $<N$. 
We put $t=t_1\shuffle \ldots \shuffle t_k \shuffle \tdun{$2$}\shuffle \ldots \shuffle \tdun{$2$}$, with $t_i=\tdun{$2$}\bullet s_i$, $s_i \neq 1$, 
for all $1\leq i \leq k$. We proceed by induction on $k$. If $k=0$, we take $t'=t=\tdun{$2$}\shuffle \ldots \shuffle \tdun{$2$}$. If $k=1$, then,
by the induction hypothesis on $N$ applied to $s_1$:
$$\overline{t}=(\overline{\tdun{$2$}} \bullet \overline{s_1})\shuffle \overline{\tdun{$2$}}\shuffle \ldots \shuffle \overline{\tdun{$2$}}
=(\overline{\tdun{$2$}} \bullet \overline{s_1'})\shuffle \overline{\tdun{$2$}}\shuffle \ldots \shuffle \overline{\tdun{$2$}}
=\overline{(\tdun{$2$} \bullet s_1')\shuffle \tdun{$2$}\shuffle \ldots \shuffle \tdun{$2$}}.$$
We take $t'=(\tdun{$2$} \bullet s_1')\shuffle \tdun{$2$}\shuffle \ldots \shuffle \tdun{$2$}$, which clearly belongs to $Vect(F'')$, 
as $s_1' \in Vect(F'')$. Let us assume the result at all rank $<k$. Then, as the elements 4 belong to $I$:
$$\overline{t_1\shuffle t_2}=\overline{\underbrace{\tdun{$2$}\bullet (t_1\shuffle s_2)}_{t'_1}}
+\overline{\underbrace{\tdun{$2$}\bullet(s_1 \bullet t_2)}_{t''_1}},$$
so:
$$\overline{t}=\overline{t'_1\shuffle t_3\shuffle \ldots \shuffle t_k\shuffle  \tdun{$2$}\shuffle \ldots \shuffle \tdun{$2$}}
+\overline{t''_1\shuffle t_3\shuffle \ldots \shuffle t_k\shuffle  \tdun{$2$}\shuffle \ldots \shuffle \tdun{$2$}}.$$
By the induction hypothesis on $k$ applied to these two partitionned trees, there exists $x'_1$ and $x''_1 \in Vect(F''),$ such that $\overline{t}=
\overline{x'_1}+\overline{x''_1}$. We take $t'=x_1'+x_1''$. Consequently, the elements $\overline{t}$, $t\in F''$, linearly span $h$.  \\

Let $t\in F''(n)$. Then it has $n$ vertices, and at most one of them is decorated by $1$. We denote by $F''_1(n)$ 
the set of elements of $F''(n)$ with one vertex decorated by $1$, and we put $F''_2(n)=F''(n)\setminus F''_1(n)$. 
Let us prove that for all $n \geq 1$, $|F''_1(n+1)| \leq 2^{n-1}$  and $|F''_2(n)| \leq 2^{n-1}$. For $n=0$, as 
$FF'_1(2)=\{\tddeux{$2$}{$1$}\}$ and $F''_2(1)=\{\tdun{$2$}\}$, this is immediate. Let us assume the result at all rank $\leq n$. Then:
$$F''_2(n+1)=\bigcup_{i=1}^{n+1} \tdun{$2$}^{\shuffle (n+1-i)} \shuffle T''(i)\cap F''_2(i)
=\{\tdun{$2$}^{\shuffle (n+1)}\} \cup \bigcup_{i=1}^n 
 \tdun{$2$}^{\shuffle (n+1-i)} \shuffle \tdun{$2$}\bullet F''_2(i).$$
Hence, $|F''_2(n+1)|\leq 1+1+2+\ldots+2^{n-1}=2^n$. 
$$F''_1(n+2)=\bigcup_{i=1}^{n+2} \tdun{$2$}^{\shuffle(n+2-i)} \shuffle T''(i)\cap F''_1(i)
=\bigcup_{i=2}^{n+2} \tdun{$2$}^{\shuffle(n+2-i)} \shuffle \tdun{$2$}\bullet F''_1(i-1).$$
Hence, $|F''_1(n+2)|\leq +1+1+\ldots+2^{n-1}=2^n$. \\

Let $\overline{\phi}_{APL}$ be the linear map induced by $\phi_{CPL}$ on $h$. 
If $t \in F''_1(n)$, by lemma \ref{23}, $\overline{\phi}_{APL}(\overline{t})$ is a linear span of words of length $n-1$.
If $t \in F''_2(n)$, by lemma \ref{23}, $\overline{\phi}_{APL}(\overline{t})$ is a linear span of words of length $n$. Hence, for all $n \geq 0$:
$$\overline{\phi}_{APL}(Vect(F''_2(n))+Vect(F''_1(n+1)))\subseteq Vect(\mbox{words of length $n$}).$$
As $\phi_{CPL}$ is surjective, we obtain:
$$\overline{\phi}_{APL}(Vect(F''_2(n))+Vect(F''_1(n+1)))=Vect(\mbox{words of length $n$}).$$
Moreover, as $dim(Vect(\mbox{words of length $n$}))=2^n$ and 
$dim(Vect(F''_2(n))+Vect(F''_1(n+1)))\leq |F''_2(n)|+|F''_1(n)|\leq 2^{n-1}+2^{n-1}=2^n$, 
the restriction of $\overline{\phi}_{APL}$ to $Vect(F''_2(n))+Vect(F''_1(n+1))$ is injective. 
Finally, $\overline{\phi}_{APL}$ is injective, so $Ker(\phi_{CPL})=I$. \end{proof}

\section{Presentation of $\B$ as a prelie algebra}

\subsection{A surjective morphism}

Let $\g_{\T(\N^*)}$ be the free prelie algebra generated by $\N^*$, as described in \cite{Chapoton}. 
It can be seen as the subspace of $\g_{\PT(\N^*)}$ generated by rooted trees (which are seen as partitioned trees such that any part of 
the partition is a singleton), with the restriction of the prelie product $\bullet$ defined by graftings. For example, in $\g_{\T(\N^*)}$, if $a,b,c,d>0$:
$$\tddeux{$a$}{$b$}\bullet \tddeux{$c$}{$d$}=\tdquatretrois{$a$}{$c$}{$d$}{$b$}+\tdquatrecinq{$a$}{$b$}{$c$}{$d$}.$$
This prelie algebra is graded, the degree of a tree being the sum of its decorations. \\

By theorem \ref{12}, there exists a unique surjective map of prelie algebras $\Phi_{PL}:\g_{\T(\N^*)}\longrightarrow \B$, 
sending $\tdun{$n$}$ to $x_1^{n-1}$ for all $n\geq 1$. As $x_1^{i-1}$ is homogeneous of degree $i$ for all $i$, 
this morphism is homogeneous of degree $0$.\\

{\bf Notation.} If $t_1\ldots t_k\in \T(\N^*)$ and $n\in \N^*$, we put:
$$B_n(t_1\ldots t_k)=\tdun{$n$}\bullet t_1\ldots t_k.$$
This is the tree obtained by grafting $t_1,\ldots,t_k$ on a common root decorated by $n$.

\begin{prop}\label{28}
Let $t=B_n( t_1\ldots t_k )\in \T(\N^*)$. We put $\phi_{PL}(t_i)=w_i$ for all $1\leq i \leq k$. Then:
$$\phi_{PL}(t)=\begin{cases}
x_0w_1\shuffle \ldots \shuffle x_0w_k \shuffle x_1^{n-1-k} \mbox{ if }k<n,\\
0\mbox{ otherwise}. 
\end{cases}$$ \end{prop}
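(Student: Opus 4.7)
The plan is to reformulate the claim as an Oudom--Guin computation in the Com-Prelie algebra $\B$ and carry it out by induction on $k$. Since $t=\tdun{$n$}\bullet t_1\ldots t_k$ in $\g_{\T(\N^*)}$ with the extended prelie product described before Lemma \ref{19}, and since $\phi_{PL}$ is a prelie morphism (and hence automatically commutes with the Oudom--Guin recursions, which are expressed purely in terms of the prelie product and the multiplication of the symmetric algebra), one has $\phi_{PL}(t)=x_1^{n-1}\bullet w_1\ldots w_k$ in the sense of this extended product on $S(\B)$. Everything reduces to computing this expression.

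I would first establish the one-factor identity: for $n\geq 2$, $x_1^{n-1}\bullet w=x_0 w\shuffle x_1^{n-2}$, and for $n=1$, $\emptyset\bullet w=0$. The second is immediate from Proposition \ref{11}. For the first, write $x_1^{n-1}=\frac{1}{(n-1)!}\,x_1^{\shuffle(n-1)}$ and apply the Com-Prelie compatibility of Proposition \ref{14} iteratively, together with the elementary computation $x_1\bullet w=x_0 w$ coming from Proposition \ref{11}.

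Next comes induction on $k$. The base case $k=0$ is the definition of $\phi_{PL}$ on the generator $\tdun{$n$}$. For the inductive step, Oudom--Guin gives
$$x_1^{n-1}\bullet w_1\ldots w_k=\bigl(x_1^{n-1}\bullet w_1\ldots w_{k-1}\bigr)\bullet w_k-x_1^{n-1}\bullet\bigl(w_1\ldots w_{k-1}\bullet w_k\bigr),$$
with $w_1\ldots w_{k-1}\bullet w_k=\sum_{i=1}^{k-1}w_1\ldots(w_i\bullet w_k)\ldots w_{k-1}$. When $k<n$, the induction hypothesis expresses the first parenthesis as the shuffle $x_0 w_1\shuffle\ldots\shuffle x_0 w_{k-1}\shuffle x_1^{n-k}$. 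Iterated use of Proposition \ref{14} then distributes $\bullet\,w_k$ across this shuffle; the identity $x_0 w\bullet v=x_0(w\bullet v)$ from Proposition \ref{11} simplifies the slots of the form $x_0 w_i\bullet w_k$, while the one-factor identity handles the $x_1^{n-k}$ slot. This yields the clean shuffle $x_0 w_1\shuffle\ldots\shuffle x_0 w_k\shuffle x_1^{n-1-k}$ together with $k-1$ ``correction'' terms of the form $x_0 w_1\shuffle\ldots\shuffle x_0(w_i\bullet w_k)\shuffle\ldots\shuffle x_0 w_{k-1}\shuffle x_1^{n-k}$; the induction hypothesis recognises these correction terms as $x_1^{n-1}\bullet(w_1\ldots w_{k-1}\bullet w_k)$, and they cancel exactly.

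For $k\geq n$, the same recursion yields vanishing. When $k=n$ the inductive value $x_1^{n-1}\bullet w_1\ldots w_{n-1}=x_0 w_1\shuffle\ldots\shuffle x_0 w_{n-1}$ has no residual $x_1^m$-slot, so the first recursion term produces only correction terms, which again cancel against the second. For $k>n$, both terms vanish by the induction hypothesis, since $w_1\ldots w_{k-1}\bullet w_k$ is a sum of monomials with $k-1\geq n$ factors. The only real difficulty is the bookkeeping in the cancellation; this is where the Com-Prelie compatibility is indispensable, as Proposition \ref{14} is precisely what matches the expansion of $(x_0 w_1\shuffle\cdots\shuffle x_0 w_{k-1}\shuffle x_1^{n-k})\bullet w_k$ with the inductive value of $x_1^{n-1}\bullet(w_1\ldots w_{k-1}\bullet w_k)$ summand by summand.
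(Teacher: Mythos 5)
Your proof is correct, but it takes a genuinely different route from the paper's. The paper factors $\phi_{PL}$ as $\phi_{CPL}\circ\psi$ through the free Com-Prelie algebra $\g_{\PT(\{1,2\})}$, expands $\tdun{$2$}^{\shuffle(n-1)}\bullet(t_1\ldots t_k)$ via lemma \ref{19} as a sum over distributions of $\{1,\ldots,k\}$ into $n-1$ blocks, kills every term containing a block of size $\geq 2$ using lemma \ref{22}, and then counts the surviving injections to cancel the $\tfrac{1}{(n-1)!}$. You instead stay entirely inside $\B$ and run the Oudom--Guin recursion directly, with proposition \ref{14} playing the role of a Leibniz rule that distributes $\bullet\,w_k$ over the shuffle factors and proposition \ref{11} evaluating each slot; the factorial bookkeeping of the paper is replaced by the exact cancellation of the correction terms against $x_1^{n-1}\bullet(w_1\ldots w_{k-1}\bullet w_k)$, which works because $n-1-(k-1)=n-k$. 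I checked the cancellation and the boundary cases ($k=n$, where the residual $x_1$-slot is empty and contributes $\emptyset\bullet w_k=0$; $k>n$, where both recursion terms vanish; $n=1$): they all go through. Your route is more self-contained --- it needs only propositions \ref{11} and \ref{14} and the Oudom--Guin formalism, not the structure theory of partitioned trees --- whereas the paper's detour through $\psi$ is reused later (e.g.\ in the commutative diagram at the end of section 4). One point you should make explicit: the induction has to be carried out for the multilinear statement $x_1^{n-1}\bullet u_1\ldots u_k=x_0u_1\shuffle\ldots\shuffle x_0u_k\shuffle x_1^{n-1-k}$ with arbitrary $u_i\in\B$ (or one must invoke $w_i\bullet w_k=\phi_{PL}(t_i\bullet t_k)$ and multilinearity), since the cancellation step applies the inductive hypothesis to monomials containing the factor $w_i\bullet w_k$, which is not itself one of the $w_j$. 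This is a matter of phrasing, not a gap.
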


\begin{proof} As  $\g_{\PT(\{1,2\})}$ is prelie, there exists a unique morphism of prelie algebras:
$$\psi:\begin{cases}
\g_{\T(\N^*)}&\longrightarrow\g_{\PT(\{1,2\})}\\
\tdun{$n$}&\longrightarrow\frac{1}{(n-1)!} \tdun{$2$}^{\shuffle (n-1)}.
\end{cases}$$
Then $\phi_{APL}\circ \psi$ is a prelie algebra morphism sending $\tdun{$n$} $ to $\frac{1}{(n-1)!} x_1^{\shuffle (n-1)}
=x_1^{n-1}$ for all $n \geq 1$, so $\phi_{APL}\circ \psi=\phi_{PL}$. We obtain, by lemma \ref{19}:
\begin{align*}
\psi(\tdun{$n$} \bullet t_1\ldots t_k)&=\frac{1}{(n-1)!} \tdun{$2$}^{\shuffle(n-1)}\bullet (\psi(t_1)\ldots \psi(t_k))\\
&=\frac{1}{(n-1)!}\sum_{I_1\sqcup \ldots \sqcup I_{n-1}=\{1,\ldots,k\}}
\tdun{$2$}\bullet \left(\prod_{i\in I_1} t_i\right)\shuffle \ldots \shuffle \tdun{$2$}\bullet \left(\prod_{i\in I_{n-1}} t_i\right)
\end{align*}
Let us apply $\phi_{APL}$ to this expression. If $|I_j|\geq 2$, by theorem \ref{27}:
$$\phi_{APL}\left( \tdun{$2$}\bullet \left(\prod_{i\in I_j} t_i\right)\right)=0.$$
Consequently, if $k\geq n$, at least one of the $I_j$ contains two elements, so $\phi_{APL}\circ \psi(t)=\phi_{PL}(t)=0$.
Let us assume that $k<n$. Hence, using the commutativity of $\shuffle$:
\begin{align*}
\phi_{PL}(\tdun{$n$} \bullet t_1\ldots t_k)&=\frac{1}{(n-1)!}\sum_{I_1\sqcup \ldots \sqcup I_{n-1}=\{1,\ldots,k\},\: |I_j|\leq 1}
x_1 \bullet \left(\prod_{i\in I_1} w_i\right)\shuffle \ldots \shuffle x_1\bullet \left(\prod_{i\in I_k} w_i\right)\\
&=\frac{1}{(n-1)!}\sum_{\iota:\{1,\ldots,k\}\longrightarrow \{1,\ldots,n-1\},\mbox{\scriptsize{ injective}}}
x_1 \bullet w_1\shuffle \ldots x_1\bullet w_k \shuffle x_1^{\shuffle (n-1-k)}\\
&=\frac{1}{(n-1)!}\sum_{\iota:\{1,\ldots,k\}\longrightarrow \{1,\ldots,n-1\},\mbox{\scriptsize{ injective}}}
x_0 w_1\shuffle \ldots x_0 w_k \shuffle x_1^{\shuffle (n-1-k)}\\
&=\frac{(n-1)\ldots (n-k)}{(n-1)!}x_0 w_1\shuffle \ldots x_0 w_k \shuffle x_1^{\shuffle (n-1-k)}\\
&=\frac{(n-1)\ldots (n-k)(n-1-k)! }{(n-1)!}x_0 w_1\shuffle \ldots x_0 w_k \shuffle x_1^{n-1-k}\\
&=x_0 w_1\shuffle \ldots x_0 w_k \shuffle x_1^{n-1-k},
\end{align*}
which is the announced result. \end{proof}

\begin{cor} \label{29}
Let $s_1,\ldots,s_k,t_1,\ldots,t_l \in \T(\{N^*)$, $k,l\geq 0$. For all $i,j,n \geq 1$:
\begin{align*}
&\phi_{PL}\left(B_{n+1}((B_i(s_1\ldots s_k) B_j(t_1\ldots t_l))\right)\\
&=\phi_{PL}\left(B_n(B_{i+1}(s_1\ldots s_k B_j(t_1\ldots t_l))\right)
+\phi_{PL}\left(B_n(B_{j+1}(B_i(s_1\ldots s_k)t_1\ldots t_l)\right).
\end{align*}\end{cor}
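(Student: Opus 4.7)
The plan is to apply Proposition \ref{28} directly to each of the three terms and to reduce the resulting equality to the standard shuffle recursion
$$x_0 P \shuffle x_0 Q = x_0(P \shuffle x_0 Q) + x_0(x_0 P \shuffle Q),$$
valid for any $P, Q \in \B$.

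First, set $u_p = \phi_{PL}(s_p)$ for $1 \leq p \leq k$, $v_q = \phi_{PL}(t_q)$ for $1 \leq q \leq l$, and
$$A = x_0 u_1 \shuffle \cdots \shuffle x_0 u_k, \qquad B = x_0 v_1 \shuffle \cdots \shuffle x_0 v_l.$$
The left-hand side is $\phi_{PL}$ of $B_{n+1}$ applied to two subtrees, so by Proposition \ref{28} it vanishes unless $2 < n+1$ (i.e.\ $n \geq 2$), in which case it equals
$$x_0 \phi_{PL}(B_i(s_1\ldots s_k)) \shuffle x_0 \phi_{PL}(B_j(t_1\ldots t_l)) \shuffle x_1^{n-2}.$$
For the right-hand side, each term is $\phi_{PL}$ of $B_n$ applied to a single subtree, so again Proposition \ref{28} gives a vanishing unless $n \geq 2$. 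Thus for $n = 1$ both sides are zero and there is nothing to prove.

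Assume now $n \geq 2$. If $k \geq i$, then $\phi_{PL}(B_i(s_1\ldots s_k)) = 0$ by Proposition \ref{28}, which kills the LHS; simultaneously the first RHS term vanishes because the inner tree has $k+1 \geq i+1$ subtrees at its root, and the second RHS term vanishes because $x_0 \phi_{PL}(B_i(s_1\ldots s_k))$ appears as a shuffle factor. The case $l \geq j$ is symmetric. So we may assume $k < i$ and $l < j$. Writing $P = \phi_{PL}(B_i(s_1\ldots s_k)) = A \shuffle x_1^{i-1-k}$ and $Q = \phi_{PL}(B_j(t_1\ldots t_l)) = B \shuffle x_1^{j-1-l}$, the LHS becomes
$$x_0 P \shuffle x_0 Q \shuffle x_1^{n-2}.$$

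For the RHS, Proposition \ref{28} applied to $B_{i+1}$ with $k+1 < i+1$ subtrees yields
$$\phi_{PL}(B_{i+1}(s_1\ldots s_k B_j(t_1\ldots t_l))) = A \shuffle x_0 Q \shuffle x_1^{i-1-k} = P \shuffle x_0 Q,$$
by commutativity of $\shuffle$; similarly the other summand gives $Q \shuffle x_0 P$. Applying Proposition \ref{28} once more to $B_n$ with one subtree, the RHS equals
$$x_0(P \shuffle x_0 Q) \shuffle x_1^{n-2} + x_0(Q \shuffle x_0 P) \shuffle x_1^{n-2}.$$
The desired equality then follows immediately from the shuffle recursion displayed above, applied to $x_0 P$ and $x_0 Q$, and shuffling both sides with $x_1^{n-2}$. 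No substantial obstacle is expected; the only care needed is the bookkeeping of edge cases where Proposition \ref{28} produces zero.
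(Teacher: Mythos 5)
Your proof is correct and follows essentially the same route as the paper's: apply Proposition \ref{28} to all three terms, dispose of the degenerate cases where everything vanishes, and reduce the remaining identity to the shuffle recursion $x_0P\shuffle x_0Q=x_0(P\shuffle x_0Q)+x_0(x_0P\shuffle Q)$. Your treatment of the vanishing cases is in fact slightly more careful than the paper's, which simply asserts them.
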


\begin{proof} We note:
$$\begin{array}{rcccl}
T_1&=B_{n+1}((B_i(s_1\ldots s_k) B_j(t_1\ldots t_l))
&=\tdun{$n+1$}\hspace{.4cm}\bullet ((\tdun{$i$}\bullet s_1\ldots s_k) (\tdun{$j$} \bullet t_1\ldots t_l)),\\
T_2&=B_n(B_{i+1}(s_1\ldots s_k B_j(t_1\ldots t_l))
&=\tdun{$n$} \bullet (\tdun{$i+1$}\hspace{.3cm}\bullet (s_1\ldots s_k( \tdun{$j$}\bullet t_1\ldots t_l))),\\
T_3&=B_n(B_{j+1}(B_i(s_1\ldots s_k)t_1\ldots t_l)
&=\tdun{$n$} \bullet(\tdun{$j+1$}\hspace{.3cm}\bullet((\tdun{$i$}\bullet s_1\ldots s_k) t_1\ldots t_l)).
\end{array}$$
If $k\geq i$, or $l\geq j$, or $n=1$, all these elements are sent to zero by $\phi_{PL}$ by proposition \ref{28}. Let us assume now that $k<i$,
$l<j$, $n<1$. We put $v_i=\phi_{PL}(s_i)$ and $w_i=\phi_{PL}(t_i)$. Then:
\begin{align*}
\phi_{PL}(T_1)&=x_0(\underbrace{x_0v_1\shuffle \ldots \shuffle x_0v_k \shuffle x_1^{i-1-k})}_{X}
\shuffle x_0(\underbrace{x_0w_1\shuffle\ldots \shuffle x_0w_l \shuffle x_1^{j-1_l})}_{Y}\shuffle x_1^{n-2}\\
&=x_0X \shuffle x_0Y\shuffle x_1^{n-2},\\
\phi_{PL}(T_2)&=x_0(x_0v_1\shuffle \ldots \shuffle x_0(x_0w_1\shuffle \ldots \shuffle x_0w_l \shuffle x_1^{j-1-l})
\shuffle x_1^{i-1-k}) \shuffle x_1^{n-2}\\
&=x_0(X \shuffle x_0Y) \shuffle x_1^{n-2},\\
\phi_{PL}(T_3)&=x_0(x_0(x_0v_1\shuffle \ldots \shuffle x_0v_k \shuffle x_1^{i-1-k})\shuffle x_0w_1
\shuffle x_0w_l\shuffle x_1^{j-1-l}) \shuffle x_1^{n-2}\\
&=x_0(x_0X \shuffle Y) \shuffle x_1^{n-2}.
\end{align*}
As $x_0X \shuffle x_0Y=x_0(X \shuffle x_0Y)+x_0(x_0X\shuffle Y)$, we obtain the result. \end{proof}

\begin{theo}\label{30}
The kernel of $\phi_{PL}$ is the prelie ideal generated by:
\begin{enumerate}
\item $B_1(t_1\ldots t_k)$, where $k\geq 1$, $t_1,\ldots, t_k \in \T(\N^*)$.
\item $B_{n+1}(B_i(s_1\ldots s_k)B_j(t_1\ldots t_l))-B_n(B_{i+1}(s_1\ldots s_kB_j(t_1\ldots t_l))-
B_{j+1}(B_i(s_1\ldots s_k) t_1\ldots t_l))$, where $k,l\geq 0$, $s_1,\ldots,s_k,t_1,\ldots,t_l \in \T(\N^*)$.
\end{enumerate}\end{theo}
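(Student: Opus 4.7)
My plan is to establish the two inclusions $J\subseteq \ker \phi_{PL}$ and $\ker \phi_{PL}\subseteq J$ separately. The first is straightforward: a type 1 generator $B_1(t_1\ldots t_k)$ with $k\geq 1$ is sent to $0$ by $\phi_{PL}$ thanks to Proposition \ref{28} (which gives $\phi_{PL}(B_n(t_1\ldots t_k))=0$ whenever $k\geq n$, applied here with $n=1$), and a type 2 generator is killed by $\phi_{PL}$ by Corollary \ref{29}. Since $\ker\phi_{PL}$ is a prelie ideal of $\g_{\T(\N^*)}$ containing both families of generators, it contains $J$.

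For the reverse inclusion, I will use the grading of $\g_{\T(\N^*)}$ and $\B$ by total degree (sum of decorations, resp.\ the degree defined for words). Both $\phi_{PL}$ and $J$ are homogeneous of degree $0$, so the induced prelie morphism $\overline{\phi}_{PL}:\g_{\T(\N^*)}/J\to \B$ is a graded surjection. Since $\dim \B_n$ is the $n$-th Fibonacci number $F_n$ by Proposition \ref{8}, injectivity of $\overline{\phi}_{PL}$ will follow if I can exhibit, for every $n\geq 1$, a spanning family of $(\g_{\T(\N^*)}/J)_n$ of cardinality at most $F_n$.

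My candidate spanning family consists of classes of \emph{ladders} $L_{n_1,\ldots,n_p}=B_{n_1}(B_{n_2}(\ldots B_{n_p}(\emptyset)\ldots))$ with $n_1,\ldots,n_{p-1}\geq 2$ and $n_p\geq 1$. Two things must be checked. On the one hand, any ladder having some $n_i=1$ with $i<p$ already lies in $J$: the subtree rooted at level $i$ is $B_1$ applied to the subladder below, which is a type 1 generator, and the upper part of the ladder is recovered by repeated left prelie multiplication by the single-vertex trees $\tdun{$n_{i-1}$},\tdun{$n_{i-2}$},\ldots,\tdun{$n_1$}$, an operation that preserves $J$ since $J$ is a prelie ideal. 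On the other hand, every tree is congruent modulo $J$ to a linear combination of ladders; this is obtained by induction on the number of vertices, using the type 2 relation as a rewriting rule at the root to decrease the number of root-children, followed by recursion on the resulting subtrees.

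The main obstacle will be this last reduction step when the root has three or more children, since the type 2 relation only directly applies to a root with exactly two children. I expect to handle this by expanding $B_n(t_1\ldots t_k)=\tdun{$n$}\bullet t_1\ldots t_k$ via the Oudom--Guin formulas on $S(\g_{\T(\N^*)})$, which rewrites such a tree in terms of iterated prelie products of $B_n$ with fewer children and the individual $t_i$'s, and then pushing the type 2 relation inductively through the subtrees; termination will require a carefully chosen induction (for example on the pair consisting of the total number of vertices and the branching profile of the tree). Once spanning is established, counting compositions $(n_1,\ldots,n_p)$ of $n$ with $n_1,\ldots,n_{p-1}\geq 2$ and $n_p\geq 1$ yields exactly $F_n$ by the Fibonacci recurrence (split on $p=1$ versus $n_1=2$ versus $n_1\geq 3$), matching $\dim \B_n$. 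Consequently $\overline{\phi}_{PL}$ is a graded isomorphism and $\ker\phi_{PL}=J$.
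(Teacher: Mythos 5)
Your argument is correct and takes essentially the same route as the paper: the inclusion $J\subseteq\ker\phi_{PL}$ via Proposition \ref{28} and Corollary \ref{29}, the reduction of the quotient to a spanning family of ladders whose non-leaf decorations are $\geq 2$, the Fibonacci count of those ladders (you use the recurrence directly where the paper uses generating functions), and the concluding dimension argument all match. The one step you single out as delicate --- rewriting an arbitrary tree as a combination of ladders modulo the ideal, including roots with three or more children --- is precisely the step the paper itself dismisses as ``not difficult to prove'', so your sketch via the Oudom--Guin expansion is, if anything, more explicit than the source.
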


\begin{proof} Let $I$ be the ideal generated by these elements. By proposition \ref{28} and corollary \ref{29}, $I\subseteq Ker(\phi_{PL})$.
We put $h=\g_{\T(\N^*)}/I$. Applying repeatedly the relation given by elements of the second form, it is not difficult to prove that for any 
$t \in \T(\N^*)$, there exists a linear span of ladders $t'$ such that $\overline{t}=\overline{t'}$ in $h$. Moreover, by the relation 
given by elements 1., if one of the vertices of a ladder $t$ which is not the leaf is decorated by $1$, then $\overline{t}=0$.
Let us denote by $L(n)$ the set of ladders decorated by $\N^*$, of weight $n$, such that all the vertices which are not the leaf are decorated by 
integerS $>1$. It turns out that $h$ is generated by the elements $\overline{t}$, $t\in L=\bigcup L(n)$. 

Let $\overline{\phi_{PL}}$ be the morphism form $h$ to $\B$ induced by $\phi_{PL}$. By homogeneity, as $\phi_{PL}$ is surjective, 
for all $n \geq 1$:
$$\overline{\phi}_{PL}(Vect(L(n)))=Vect(\mbox{words of degree }n).$$
In order to prove that $I=Ker(\phi_{PL})$, it is enough to prove that $\overline{\phi}_{PL}$ is injective. By homogeneity, it is enough to prove
that $\overline{\phi}_{\mid Vect(L(n))}$ is injective for all $n\geq 1$. Hence, it is enough to prove that for all $n \geq 1$,
$$|L(n)| = dim(Vect(\mbox{words of degree }n))=p_n,$$
where the $p_n$ are the integers defined in proposition \ref{8}. Let $l_n=|L(n)|$ and $q_n$ be the number of $t\in L(n)$ with no vertex
decorated by $1$.  Then for all $n \geq 2$, $l_n=q_n+q_{n-1}$, and $l_1=1$. We put:
$$L=\sum_{n=1}^\infty l_n X^n,\: Q=\sum_{n=1}^\infty q_nX^n.$$
We obtain $P=X+Q+XQ$. Moreover:
$$Q=\frac{1}{\displaystyle 1-\sum_{i\geq 2}X^i}-1=\frac{1}{1-\frac{X^2}{1-X}}-1=\frac{X^2}{1-X-X^2},$$
Finally:
$$L=\frac{X}{1-X-X^2}=F.$$
So, for all $n \geq 1$, $|L(n)|=p_n$. \end{proof}\\

As an immediate corollary, a basis of $h$ is given by the classes of the elements of $L$. Turning to $\B$, we obtain:

\begin{cor}\label{31}
Let $w=a_1\ldots a_k$ be a word with letters in $\N^*$. 
\begin{enumerate}
\item We put:
$$m_w=x_1^{a_1-1}\bullet(x_1^{a_1-1}\bullet(\ldots(x_1^{a_{k-1}-1}\bullet x_1^{a_k})\ldots).$$
\item We shall say that $w$ is \emph{admissible} if $a_1,\ldots,a_{k-1}>1$. The set of admissible words is denoted by $\Adm$.
\end{enumerate}
Then $(m_w)_{w \in \Adm}$ is a basis of $\B$.
\end{cor}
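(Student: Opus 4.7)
The plan is to deduce Corollary 31 directly from Theorem 30 by transporting the ladder basis of $h=\g_{\T(\N^*)}/I$ through the isomorphism $\overline{\phi}_{PL}\colon h\to\B$ that was produced in its proof.

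First, from the proof of Theorem 30 the classes $\{\overline{t} : t\in L\}$ form a basis of $h$, where $L=\bigcup_{n\geq 1} L(n)$ is the set of ladders in $\T(\N^*)$ whose non-leaf vertices are all decorated by integers $>1$; moreover $\overline{\phi}_{PL}$ was shown to be an isomorphism by the weight-by-weight dimension count using the Fibonacci formal series of proposition \ref{8}. Consequently, $\{\phi_{PL}(t) : t\in L\}$ is automatically a basis of $\B$, and the only remaining work is to identify this image with $\{m_w : w\in\Adm\}$.

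Second, I would unfold $\phi_{PL}(t)$ on a generic ladder. A ladder in $L$ with root-to-leaf decorations $(a_1,\ldots,a_k)$ equals, in $\g_{\T(\N^*)}$,
$$\tdun{$a_1$}\bullet\bigl(\tdun{$a_2$}\bullet\bigl(\cdots\bullet\tdun{$a_k$}\bigr)\cdots\bigr),$$
since grafting a rooted tree on the single vertex $\tdun{$n$}$ via the prelie product produces a tree with an $n$-decorated root carrying that tree as unique subtree. Using that $\phi_{PL}$ is a morphism of prelie algebras sending $\tdun{$n$}$ to $x_1^{n-1}$, this yields precisely the element $m_w$ attached to the word $w=a_1\cdots a_k$.

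Third, I would observe that the map $(a_1,\ldots,a_k)\mapsto a_1\cdots a_k$ is a bijection from $L$ to $\Adm$: the defining condition of $L$, that non-leaf decorations are $>1$, is exactly the admissibility condition $a_1,\ldots,a_{k-1}>1$, no further constraint being placed on $a_k\in\N^*$ on either side. Transporting the ladder basis through $\overline{\phi}_{PL}$ then identifies it with $\{m_w : w\in\Adm\}$, which is the desired basis of $\B$.

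The only point that needs genuine care is the dictionary step: verifying that the chosen labelling convention for $m_w$ in the statement lines up correctly with the action of $\phi_{PL}$ on the right-iterated grafting expression above, so that one really recovers $m_w$ and not some shifted variant. Once this bookkeeping is done, no new algebraic input beyond theorem \ref{30} is needed.
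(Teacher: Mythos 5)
Your proposal is correct and follows essentially the same route as the paper, which presents the corollary as an immediate consequence of the proof of theorem \ref{30}: the classes of the ladders in $L$ form a basis of $h$, the induced map $\overline{\phi}_{PL}$ is an isomorphism, and $\phi_{PL}(l(a_1\ldots a_k))=m_{a_1\ldots a_k}$ with $L$ in bijection with $\Adm$. Your remark about the ``dictionary step'' is well taken, since the displayed formula for $m_w$ in the statement contains typos (the exponents should read $a_1-1,a_2-1,\ldots,a_{k-1}-1,a_k-1$), but the intended identification is exactly the one you describe.
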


{\bf Remark.} If $w$ is not admissible, that is to say if there exists $1\leq i<k$, such that $a_i=1$, then $m_w=0$ by proposition \ref{28}. \\

We extend the map $w\longrightarrow m_w$ by linearity.

\subsection{Prelie product in the basis of admissible words}

{\bf Notations.}
\begin{enumerate}
\item  For all $k,l$, we denote by $Sh(k,l)$ the set of $(k,l)$- shuffles, that is to say permutations $\zeta \in \mathfrak{S}_{k+l}$ such that
$\zeta(1)<\ldots<\zeta(k)$, $\zeta(k+1)<\ldots<\zeta(k+l)$.
\item For all $k,l$ we denote by $Sh_\prec(k,l)$ the set of $(k,l)$-shuffles $\zeta$ such that $\zeta^{-1}(k+l)=k$.
\item For all $k,l$ we denote by $Sh_\succ(k,l)$ the set of $(k,l)$-shuffles $\zeta$ such that $\zeta^{-1}(k+l)=k+l$.
\item The symmetric group $\mathfrak{S}_n$ acts on the set of words with letters in $\N^*$ of length $n$ by permutation of the letters:
$$\sigma.(a_1\ldots a_n)=a_{\sigma^{-1}(1)}\ldots a_{\sigma^{-1}(n)}.$$
\end{enumerate}

\begin{prop}\label{32}
Let $\K \langle\N^*\rangle$ be the space generated by words with letters in $\N^*$. We define a dendriform structure on this space by:
\begin{align*}
(a_1\ldots a_k)\prec(b_1\ldots b_l)&=\sum_{\zeta \in Sh_\prec(k,l)}\zeta.a_1\ldots a_k b_1\ldots b_{k-1}(b_k+1)\\
(a_1\ldots a_k)\succ(b_1\ldots b_l)&=\sum_{\zeta \in Sh_\succ(k,l)}\zeta.a_1\ldots a_{k-1}(a_k+1) b_1\ldots b_k.
\end{align*}
The associative product $\prec+\succ$ is denoted by $\star$.
\end{prop}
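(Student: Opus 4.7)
The plan is to verify the three dendriform axioms
\[
(u \prec v) \prec w = u \prec (v \star w), \quad (u \succ v) \prec w = u \succ (v \prec w), \quad (u \star v) \succ w = u \succ (v \succ w)
\]
by direct combinatorial expansion. Writing $u = u_1\ldots u_k$, $v = v_1\ldots v_l$, $w = w_1\ldots w_m$ and denoting by $v^+$ the word $v_1\ldots v_{l-1}(v_l+1)$ (and analogously $u^+$, $w^+$), the definitions take the uniform form
\[
u \prec v = \sum_{\zeta \in Sh_\prec(k,l)} \zeta.(u\, v^+), \qquad u \succ v = \sum_{\zeta \in Sh_\succ(k,l)} \zeta.(u^+ v).
\]
Thus $\prec$ and $\succ$ are ``twists'' of the classical last-letter half-shuffles on $\K\langle \N^*\rangle$ by the map $w \mapsto w^+$, and everything will reduce to the fact that the unshifted half-shuffles already satisfy the dendriform axioms.

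For each axiom, I would iterate these formulas and expand each side as a sum over shuffles in $\mathfrak{S}_{k+l+m}$ preserving the three relative orders of $u$, $v$, $w$. For axiom~1, both sides collapse to the sum over such three-shuffles $\sigma$ satisfying $\sigma^{-1}(k+l+m) = k$ (i.e.\ $u_k$ at the last position), each term being $\sigma.(u\, v^+ w^+)$. For axiom~2, both sides collapse to the sum over three-shuffles with $v_l$ last, applied to $u^+ v\, w^+$. For axiom~3, both sides collapse to the sum over three-shuffles with $w_m$ last, applied to $u^+ v^+ w$. At the level of shuffle indices, the identity is just the partitioning of every three-shuffle according to which of $u$, $v$, $w$ supplies its last letter, which is the classical shuffle dendriform axiom.

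The only non-trivial point is checking that the $+1$ bumps land consistently. In each iterated product two copies of $\prec$ or $\succ$ appear, bumping exactly two letters among $\{u_k, v_l, w_m\}$; one verifies that in each axiom both sides bump the same pair, namely the two whose letters do not end up at the final position. For instance, in $u \prec (v \prec w)$ the inner $\prec$ bumps $w_m$ while the outer $\prec$ bumps the last letter of $v \prec w$, which is $v_l$; in the parallel term $u \prec (v \succ w)$ the inner $\succ$ bumps $v_l$ while the outer $\prec$ bumps the last letter of $v \succ w$, which is $w_m$. Either way we obtain the decorated word $u\, v^+ w^+$, matching the LHS $(u \prec v) \prec w$, where the outer $\prec$ bumps $w_m$ and the inner $\prec$ bumps $v_l$.

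The main obstacle is precisely this bookkeeping: the $+1$ is applied to a letter determined by the syntactic parse of the expression rather than by the shuffle itself, so one must check that different parses of the same three-shuffle produce identical decorated words. Once this is established in the three cases, the classical partition of three-shuffles by their final letter closes the proof.
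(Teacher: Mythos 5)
Your proposal is correct and matches the paper's own argument: the paper likewise introduces the set $Sh(k,l,m)$ of three-shuffles and shows that each side of each dendriform axiom equals the sum over three-shuffles with the prescribed block supplying the last letter, applied to the word in which the last letters of the other two blocks are bumped by $+1$. Your explicit bookkeeping of which letters get bumped is exactly the (unstated) verification behind the paper's three displayed identities.
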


\begin{proof} We denote by $Sh(k,l,m)$ the set of $k+l+m$-permutations such that $\zeta(1)<\ldots <\zeta(k)$,
$\zeta(k+1)<\ldots<\zeta(k+l)$, $\zeta(k+l+1)<\ldots \zeta(k+l+m)$. Then:
\begin{align*}
&(a_1\ldots a_k \prec b_1\ldots b_l)\prec c_1\ldots c_m=a_1\ldots a_k \prec (b_1\ldots b_l \star c_1\ldots c_m)\\
&=\sum_{\zeta \in Sh(k,l,m), \zeta^{-1}(k+l+m)=k}\zeta.a_1\ldots a_k b_1\ldots (b_l+1)c_1\ldots (c_m+1);\\
&(a_1\ldots a_k \succ b_1\ldots b_l)\prec c_1\ldots c_m=a_1\ldots a_k \succ (b_1\ldots b_l \prec c_1\ldots c_m)\\
&=\sum_{\zeta \in Sh(k,l,m), \zeta^{-1}(k+l+m)=k+l}\zeta.a_1\ldots (a_k+1) b_1\ldots b_l c_1\ldots (c_m+1);\\
&(a_1\ldots a_k \star b_1\ldots b_l)\succ c_1\ldots c_m=a_1\ldots a_k \succ (b_1\ldots b_l \succ c_1\ldots c_m)\\
&=\sum_{\zeta \in Sh(k,l,m), \zeta^{-1}(k+l+m)=k+l+m}\zeta.a_1\ldots (a_k+1) b_1\ldots (b_l+1)c_1\ldots c_m.
\end{align*}
So $\K\langle \langle\N^*\rangle\rangle$ is a dendriform algebra. \end{proof}\\

We postpone the study of this dendriform algebra to section \ref{s52}.\\

{\bf Notations.} For all $a_1,\ldots,a_k \in \N^*$, we denote by $l(a_1\ldots a_k)=B_{a_1}\circ \ldots \circ B_{a_k}(1)$ 
the ladder decorated from the root to the leaf by $a_1,\ldots,a_k$. Note that $m_{a_1\ldots a_k}=\phi_{PL}(l(a_1\ldots a_k))$.

\begin{lemma}
Let $k,l\geq 1$ and let $a_1,\ldots,a_l,b_1,\ldots,b_l\in \N^*$. Then:
$$\phi_{PL}(B_{a_1+1}(l(a_2\ldots a_k)l(b_1\ldots b_l))+B_{b_1+1}(l(a_1\ldots a_k)l(b_2\ldots b_l))
=m_{a_1\ldots a_k \star b_1\ldots b_l}.$$
\end{lemma}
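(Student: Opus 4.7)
The plan is to proceed by induction on $k + l$, with Corollary~\ref{29} providing the key recursion. Write $u = a_1\ldots a_k$, $v = b_1\ldots b_l$, $u' = a_2\ldots a_k$, $v' = b_2\ldots b_l$, and denote by $T_1(u,v) = B_{a_1+1}(l(a_2\ldots a_k)l(b_1\ldots b_l))$ and $T_2(u,v) = B_{b_1+1}(l(a_1\ldots a_k)l(b_2\ldots b_l))$ the two trees appearing in the statement.

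For the base case $k = l = 1$, Proposition~\ref{28} gives $\phi_{PL}(T_1+T_2) = x_0 x_1^{b_1-1}\shuffle x_1^{a_1-1} + x_0 x_1^{a_1-1}\shuffle x_1^{b_1-1}$, while on the right $a_1\star b_1 = (a_1+1)b_1 + (b_1+1)a_1$ gives $m_{a_1\star b_1}$ equal to the same expression via the formula $m_{cd} = x_0 x_1^{d-1}\shuffle x_1^{c-2}$ (itself a consequence of Proposition~\ref{28} applied to the two-vertex ladder).

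For the inductive step with $k, l\geq 2$, I apply Corollary~\ref{29} to $T_1(u,v)$ with $n = a_1$, $i = a_2$, $j = b_1$. The two resulting summands are of the form $B_{a_1}(\cdot)$ applied to trees that I recognize as $T_1(u',v)$ and $T_2(u',v)$ respectively. Since $\phi_{PL}$ is a prelie morphism with $\phi_{PL}(\tdun{$a_1$}) = x_1^{a_1-1}$, this gives $\phi_{PL}(T_1(u,v)) = x_1^{a_1-1}\bullet(\phi_{PL}(T_1(u',v)) + \phi_{PL}(T_2(u',v)))$, which by the induction hypothesis equals $x_1^{a_1-1}\bullet m_{u'\star v}$. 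Symmetrically, $\phi_{PL}(T_2(u,v)) = x_1^{b_1-1}\bullet m_{u\star v'}$. Matching the sum with the right-hand side then reduces to the identity $m_{u\star v} = x_1^{a_1-1}\bullet m_{u'\star v} + x_1^{b_1-1}\bullet m_{u\star v'}$: when $k, l\geq 2$ the increment in the definition of $\prec$ and $\succ$ acts at position $k$ or $k+l$, never at position $1$, so every word in $u\star v$ starts with $a_1$ or $b_1$; grouping by first letter yields $u\star v = a_1\cdot(u'\star v) + b_1\cdot(u\star v')$ in $\K\langle\N^*\rangle$, and applying $w\mapsto m_w$ together with the basic recursion $m_{cw''} = x_1^{c-1}\bullet m_{w''}$ completes the step.

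The main obstacle is the boundary case when exactly one of $k, l$ equals $1$: Corollary~\ref{29} does not apply to a tree with only one child, and the first-letter recursion for $u\star v$ is disrupted because the incremented letter may appear at position~$1$ (for instance, when $k = 1$, some words in $u\succ v$ begin with $a_1+1$). These cases will be handled by computing $\phi_{PL}(T_1)$ and $\phi_{PL}(T_2)$ directly via Proposition~\ref{28}, expanding $u\star v$ via Proposition~\ref{32}, and matching term by term.
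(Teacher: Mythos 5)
Your overall strategy --- induction on $k+l$ driven by Corollary \ref{29}, the base case via Proposition \ref{28}, and in the generic step the reduction to the first-letter identity $u\star v=a_1(u'\star v)+b_1(u\star v')$ --- is exactly the paper's proof, and that part of your argument is correct. The problem is the boundary case $k=1$ or $l=1$, which you defer with a plan that does not work as stated, and which is not actually peripheral: when $k=2$ your inductive step for $T_1(u,v)$ invokes the lemma for the pair $(a_2,\, b_1\ldots b_l)$, i.e.\ precisely the case $k=1$, so the whole induction rests on it.

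Your plan for that case is to compute $\phi_{PL}(T_1)$ and $\phi_{PL}(T_2)$ by Proposition \ref{28} and ``match term by term'' against the expansion of $a_1\star b_1\ldots b_l$. But Proposition \ref{28} returns shuffle expressions such as $\phi_{PL}(T_2)=x_0x_1^{a_1-1}\shuffle x_0 m_{b_2\ldots b_l}\shuffle x_1^{b_1-2}$, whereas the right-hand side is a linear combination of basis elements $m_w$, each of which is itself a nested shuffle expression; rewriting the former in the basis $(m_w)$ is not a term-by-term matching but a shuffle identity that would need its own induction --- essentially a re-proof of the lemma in this case. The paper's treatment is much lighter and stays inside the same induction: for $k=1$ the first tree $T_1=B_{a_1+1}(l(b_1\ldots b_l))$ is a ladder, so $\phi_{PL}(T_1)=m_{(a_1+1)b_1\ldots b_l}$ by the very definition of $m_w$; the second tree $T_2=B_{b_1+1}(l(a_1)l(b_2\ldots b_l))$ still has two subtrees at its root, so Corollary \ref{29} applies to it and, exactly as in your generic step, gives $\phi_{PL}(T_2)=x_1^{b_1-1}\bullet m_{a_1\star b_2\ldots b_l}=m_{b_1(a_1\star b_2\ldots b_l)}$ by the induction hypothesis. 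One concludes with the corrected decomposition $a_1\star b_1\ldots b_l=(a_1+1)b_1\ldots b_l+b_1\,(a_1\star b_2\ldots b_l)$, the single word beginning with $a_1+1$ being exactly the one your first-letter recursion misses; the case $l=1$ is symmetric. With that replacement your proof closes up and coincides with the paper's.
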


\begin{proof} By induction on $k+l$. If $k=l=1$, then:
$$\phi_{PL}(\tddeux{$a_1+1$}{$b_1$}\hspace{.5cm}+\tddeux{$b_1+1$}{$a_1$}\hspace{.5cm})
=m_{(a_1+1)b_1+(b_1+1)a_1}=m_{a_1 \star b_1}.$$
Let us assume the result at all ranks $<k+l$. If $k=1$, then:
\begin{align*}
&\phi_{PL}(B_{a_1+1}(l(b_2\ldots b_l))+B_{b_1+1}(l(a_1)l(b_2\ldots b_l))\\
&=\phi_{PL}(\tdun{$a_1+1$}\hspace{.5cm} \bullet l(b_2\ldots b_l)+\tdun{$b_1+1$}\hspace{.5cm}\bullet (l(a_1)
l(b_2\ldots b_l)))\\
&=\phi_{PL}(l((a_1+1)b_2\ldots b_l))+\phi_{PL}(\tdun{$b_1$}\bullet (l((a_1+1)b_2\ldots b_l)+\tdun{$b_2+1$}
\hspace{.5cm}\bullet(l(a_1)l(b_3\ldots b_l)))\\
&=m_{(a_1+1)b_2\ldots b_l}+m_{b_1(a_1\star b_2\ldots b_l)}\\
&=m_{(a_1+1)b_2\ldots b_l}+\sum_{i=1}^{l-1}m_{b_1\ldots b_i (a_1+1)\ldots b_l}+m_{b_1\ldots (b_l+1)a_1}\\
&=m_{a_1\star b_1\ldots b_l}.
\end{align*}
If $l=1$, a similar computation, permuting the $a_i$'s and the $b_j$'s, proves the result. If $k,l>1$, then:
\begin{align*}
&\phi_{PL}(B_{a_1+1}(l(a_2\ldots a_k)l(b_1\ldots b_l))+B_{b_1+1}(l(a_1\ldots a_k)l(b_2\ldots b_l))\\
&=\phi_{PL}(\tdun{$a_1$}\bullet(\tdun{$a_2+1$}\hspace{.5cm} \bullet l(a_3\ldots a_k)l(b_1\ldots b_l))+
\tdun{$b_1+1$} \hspace{.5cm} \bullet l(a_1\ldots a_k)l(b_2\ldots b_l)))\\
&+\phi_{PL}(\tdun{$b_1$}\bullet(\tdun{$a_1+1$}\hspace{.5cm} \bullet l(a_2\ldots a_k)l(b_2\ldots b_l))+
\tdun{$b_2+1$} \hspace{.5cm} \bullet l(a_1\ldots a_k)l(b_3\ldots b_l)))\\
&=m_{a_1(a_2\ldots a_k \star b_1\ldots b_l)+b_1(a_1\ldots a_k\star b_2\ldots b_l)}\\
&=m_{a_1\ldots a_k \star b_1\ldots b_l}.
\end{align*}
Hence, the result holds for all $k,l\geq 1$. \end{proof}

\begin{theo}\label{34}
For all $a_1,\ldots,a_k,b_1,\ldots,b_l \in \N^*$:
$$m_{a_1\ldots a_k} \bullet m_{b_1\ldots b_l}=\displaystyle \sum_{i=1}^{k-1} 
m_{a_1\ldots a_{i-1}(a_i-1)(a_{i+1}\ldots a_k \star b_1\ldots b_l)}+m_{a_1\ldots a_kb_1\ldots b_l}.$$
\end{theo}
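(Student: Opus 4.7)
The plan is to lift the computation to the free prelie algebra $\g_{\T(\N^*)}$ via the prelie morphism $\phi_{PL}$. Since $m_{a_1\ldots a_k}=\phi_{PL}(l(a_1\ldots a_k))$, we have
$$m_{a_1\ldots a_k}\bullet m_{b_1\ldots b_l}=\phi_{PL}\bigl(l(a_1\ldots a_k)\bullet l(b_1\ldots b_l)\bigr),$$
and in $\g_{\T(\N^*)}$ this prelie product is the sum over the $k$ vertices of $l(a_1\ldots a_k)$ of the tree obtained by grafting $l(b_1\ldots b_l)$ at that vertex. Grafting at the leaf produces $l(a_1\ldots a_k b_1\ldots b_l)$, which maps to $m_{a_1\ldots a_k b_1\ldots b_l}$ and supplies the last summand of the claimed formula. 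For $i<k$ the grafting produces $T_i = B_{a_1}(\cdots B_{a_{i-1}}(X_i)\cdots)$, where $X_i := B_{a_i}(l(a_{i+1}\ldots a_k)\,l(b_1\ldots b_l))$. Applying Proposition~\ref{28} with a single child yields $\phi_{PL}(B_n(U)) = x_1^{n-1}\bullet \phi_{PL}(U)$ for any single tree $U$, and combining with the recursion $x_1^{a-1}\bullet m_w = m_{aw}$ of Corollary~\ref{31} shows that $\phi_{PL}(T_i)$ equals $\phi_{PL}(X_i)$ with the letters $a_1,\ldots,a_{i-1}$ prepended. The theorem therefore reduces to the sub-claim
$$\phi_{PL}(X_i) = m_{(a_i-1)(a_{i+1}\ldots a_k \star b_1\ldots b_l)}.$$

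I prove this sub-claim, more generally in the form $\phi_{PL}(B_a(l(c_1\ldots c_s)\,l(d_1\ldots d_t))) = m_{(a-1)(c_1\ldots c_s \star d_1\ldots d_t)}$ for $s,t\geq 1$, by induction on $t$. Applying the preceding lemma to the sequences $(a-1,c_1,\ldots,c_s)$ and $(d_1,\ldots,d_t)$ gives
$$\phi_{PL}\bigl(B_a(l(c_1\ldots c_s)\,l(d_1\ldots d_t))\bigr)+\phi_{PL}(Y)=m_{(a-1)c_1\ldots c_s \star d_1\ldots d_t},$$
where $Y := B_{d_1+1}(l((a-1)c_1\ldots c_s)\,l(d_2\ldots d_t))$. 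For $t\geq 2$ the inductive hypothesis gives $\phi_{PL}(Y) = m_{d_1((a-1)c_1\ldots c_s \star d_2\ldots d_t)}$, and one concludes using the identity
$$(a-1)c_1\ldots c_s \star d_1\ldots d_t = (a-1)\bigl(c_1\ldots c_s \star d_1\ldots d_t\bigr) + d_1\bigl((a-1)c_1\ldots c_s \star d_2\ldots d_t\bigr),$$
which is obtained by splitting $Sh_\prec$ and $Sh_\succ$ according to whether position $1$ of the output receives the first letter of the left factor or that of the right factor; the condition $t\geq 2$ is precisely what rules out the pathological case where an incremented letter lands first. For the base case $t=1$, $Y$ is the ladder $l((d_1+1)(a-1)c_1\ldots c_s)$, so $\phi_{PL}(Y) = m_{(d_1+1)(a-1)c_1\ldots c_s}$, and one checks by a direct enumeration of the $s+2$ shuffles in $(a-1)c_1\ldots c_s \star d_1$ that removing this single $m$-term leaves exactly $m_{(a-1)(c_1\ldots c_s \star d_1)}$.

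The main technical step is thus the combinatorial recursion for $\star$ and the correct reindexing when applying the preceding lemma to $X_i$. Once the sub-claim is established, summing $\phi_{PL}(T_i) = m_{a_1\ldots a_{i-1}(a_i-1)(a_{i+1}\ldots a_k \star b_1\ldots b_l)}$ over $i=1,\ldots,k-1$ and adding the leaf contribution yields the announced formula. The degenerate case $a_i=1$ is consistent: Proposition~\ref{28} forces $\phi_{PL}(X_i)=0$, while on the right-hand side the corresponding word begins with $0$ and is not admissible, so the associated $m$ vanishes.
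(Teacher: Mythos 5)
Your proof is correct. It follows the paper's strategy in outline---lift the computation to $\g_{\T(\N^*)}$ via $\phi_{PL}$, decompose the grafting of one ladder onto another, and use the unnumbered lemma preceding the theorem to recognize the branching terms---but it handles the key technical step differently. The paper inducts on $k$ by peeling off the root, and evaluates the two-branch tree $B_{a_1}(l(a_2\ldots a_k)l(b_1\ldots b_l))$ by first invoking corollary \ref{29} to replace its image by that of $(a_1-1)$ grafted on the two-term combination occurring in the lemma, which gives $m_{(a_1-1)(a_2\ldots a_k\star b_1\ldots b_l)}$ in one stroke. You instead prove the same evaluation $\phi_{PL}(B_a(l(c_1\ldots c_s)l(d_1\ldots d_t)))=m_{(a-1)(c_1\ldots c_s\star d_1\ldots d_t)}$ by induction on $t$, using only the lemma together with the first-letter recursion $u\star v=u_1(u'\star v)+v_1(u\star v')$ for the dendriform product (valid here precisely because both factors have length at least two, so the incremented last letters can never land in position one); your base case $t=1$ and the recursion both check out. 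This costs an extra combinatorial verification but buys independence from corollary \ref{29} and makes explicit the Zinbiel-type recursion underlying the formula; your explicit treatment of the degenerate letters $a_i=1$, where $\phi_{PL}(X_i)=0$ matches the vanishing of the corresponding non-admissible $m$-term, is also more careful than the paper's, which leaves that convention implicit. One cosmetic point: the identity $\phi_{PL}(B_n(U))=x_1^{n-1}\bullet\phi_{PL}(U)$ is most directly the statement that $\phi_{PL}$ is a prelie morphism applied to $B_n(U)=\tdun{$n$}\bullet U$, rather than a consequence of proposition \ref{28} (which gives the equivalent form $x_0\phi_{PL}(U)\shuffle x_1^{n-2}$).
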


\begin{proof} By definition of $m_{a_1 b_1\ldots b_l}$, if $k=1$, $m_{a_1}\bullet m_{b_1\ldots b_l}=m_{a_1b_1\ldots b_l}$.
So the result holds if $k=1$. Let us assume that $k\geq 2$.  In $\g_{\T(\N^*)}$, we have:
$$l(a_1\ldots a_k) \bullet l(b_1\ldots b_l)
=\tdun{$a_1$}\bullet (l(a_2\ldots a_k)\bullet l(b_1\ldots b_l))+\tdun{$a_1$}\bullet l(a_2\ldots a_k)l(b_1\ldots b_l).$$
Applying $\phi_{PL}$:
\begin{align*}
m_{a_1\ldots a_k}\bullet m_{b_1\ldots b_l}&=m_{a_1 (a_2\ldots a_k)\bullet (b_1\ldots b_l)}\\
&+\phi_{PL}(\tdun{$a_1-1$}\hspace{.5cm}\bullet(\tdun{$a_2+1$}\hspace{.6cm}l(a_3\ldots a_k)l(b_1\ldots b_l))
+\tdun{$b_1+1$}\hspace{.5cm}\bullet l(a_1\ldots a_k)l(b_2\ldots b_l)))\\
&=m_{a_1 (a_2\ldots a_k)\bullet (b_1\ldots b_l)}+m_{(a_1-1)(a_2\ldots a_k\star b_1\ldots b_l)},
\end{align*}
by the preceding lemma. The result follows from an easy induction. \end{proof}\\

{\bf Remark.} In particular, $m_1\circ m_{b_1\ldots b_l}=0$.

\begin{cor}
Let $a_1\ldots a_k,b_1\ldots b_l$ be two words with letters in $\N^*$. Then $m_{a_1\ldots a_k}\bullet m_{b_1\ldots b_l}$
is a span of $m_w$, where $w$ is a word with $k+l$ letters and of weight $a_1+\ldots+a_k+b_1+\ldots+b_l$.
\end{cor}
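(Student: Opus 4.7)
The plan is to deduce this corollary directly from the explicit formula given in Theorem \ref{34}, by checking for each summand that the index word has the right length and weight.

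Recall that Theorem \ref{34} gives
$$m_{a_1\ldots a_k} \bullet m_{b_1\ldots b_l}=\sum_{i=1}^{k-1} m_{a_1\ldots a_{i-1}(a_i-1)(a_{i+1}\ldots a_k \star b_1\ldots b_l)}+m_{a_1\ldots a_kb_1\ldots b_l}.$$
So it suffices to verify that every word $w$ appearing as an index on the right-hand side has length $k+l$ and weight $a_1+\ldots+a_k+b_1+\ldots+b_l$. For the extra term $m_{a_1\ldots a_k b_1\ldots b_l}$, both length and weight are obvious from the concatenation.

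For the summand indexed by $i$, the prefix $a_1\ldots a_{i-1}(a_i-1)$ has length $i$ and weight $a_1+\ldots+a_{i-1}+(a_i-1)$. It remains to understand the words appearing in the dendriform product $(a_{i+1}\ldots a_k)\star(b_1\ldots b_l)$. Inspecting the definitions of $\prec$ and $\succ$ in Proposition \ref{32}, each monomial in $(a_{i+1}\ldots a_k)\prec(b_1\ldots b_l)$ is a shuffle of the two words with the final letter $b_l$ incremented by $1$, and each monomial in the $\succ$ piece is a shuffle with $a_k$ incremented by $1$. In either case, the resulting word has length $(k-i)+l$ and weight $(a_{i+1}+\ldots+a_k)+(b_1+\ldots+b_l)+1$.

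Concatenating the prefix with any such word yields a word of total length $i+(k-i)+l=k+l$ and total weight
$$\big(a_1+\ldots+a_{i-1}+a_i-1\big)+\big(a_{i+1}+\ldots+a_k+b_1+\ldots+b_l+1\big)=\sum_{j=1}^k a_j+\sum_{j=1}^l b_j,$$
the $-1$ from the prefix and the $+1$ from the dendriform product cancelling. Hence every $m_w$ appearing on the right-hand side of the formula has the required length and weight, proving the corollary. There is essentially no obstacle beyond this length-and-weight bookkeeping; the entire content is already packed into Theorem \ref{34} together with the fact that the dendriform products $\prec$ and $\succ$ preserve total length while raising the total weight by exactly one.
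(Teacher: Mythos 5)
Your proof is correct and is exactly the argument the paper intends: the corollary is stated without proof as an immediate consequence of Theorem \ref{34}, and the only content is the length-and-weight bookkeeping you carry out, namely that the prefix $a_1\ldots a_{i-1}(a_i-1)$ loses one unit of weight while each monomial of the dendriform product $\star$ preserves total length and gains exactly one unit of weight, so the two effects cancel. (The only cosmetic point is that when $a_i=1$ the letter $a_i-1=0$ leaves $\N^*$; this issue is already present in Theorem \ref{34} itself and is resolved by the convention that the corresponding $m_w$ vanishes, so it is not a gap in your argument.)
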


Hence, $\B$ is a bigraded prelie algebra, with:
$$\B_{n,k}=Vect(m_{a_1\ldots a_k}\mid a_1+\ldots +a_k=n).$$
We put:
$$G=\sum_{k,n\geq 0} dim(\B_{n,k})X^nY^k.$$

\begin{prop} 
$\displaystyle G=\frac{XY}{1-X-X^2Y}=\sum_{k=1}^\infty \sum_{l=2k-1}^\infty \binom{l-k}{k-1}X^l Y^k$.
\end{prop}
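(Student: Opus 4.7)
The plan is straightforward combinatorics, using Corollary~\ref{31} to reduce the dimension count to enumeration of admissible words.

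\textbf{Step 1: Identify what to count.} By Corollary~\ref{31}, $(m_w)_{w\in\Adm}$ is a basis of $\B$. From Theorem~\ref{34} (and its corollary), $m_{a_1\ldots a_k}$ lives in $\B_{n,k}$ where $n=a_1+\ldots+a_k$. Hence $\dim(\B_{n,k})$ equals the number of admissible words $a_1\ldots a_k\in\Adm$ of weight $n$, i.e.\ words of length $k$ with $a_1,\ldots,a_{k-1}\geq 2$ and $a_k\geq 1$ summing to $n$.

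\textbf{Step 2: Generating function per length.} For fixed $k\geq 1$, each of the first $k-1$ letters contributes the weight series $\sum_{a\geq 2}X^a=\frac{X^2}{1-X}$ and the last letter contributes $\sum_{a\geq 1}X^a=\frac{X}{1-X}$. Thus
$$\sum_{n\geq 0}\dim(\B_{n,k})X^n=\left(\frac{X^2}{1-X}\right)^{k-1}\cdot\frac{X}{1-X}=\frac{X^{2k-1}}{(1-X)^k}.$$

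\textbf{Step 3: Sum over $k$ geometrically.} Summing against $Y^k$ gives
$$G=\sum_{k\geq 1}\frac{X^{2k-1}Y^k}{(1-X)^k}=\frac{XY}{1-X}\sum_{k\geq 0}\left(\frac{X^2Y}{1-X}\right)^k=\frac{XY}{1-X}\cdot\frac{1}{1-\frac{X^2Y}{1-X}}=\frac{XY}{1-X-X^2Y}.$$

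\textbf{Step 4: Explicit coefficients.} Using the binomial series $\frac{1}{(1-X)^k}=\sum_{j\geq 0}\binom{j+k-1}{k-1}X^j$, one obtains
$$\frac{X^{2k-1}}{(1-X)^k}=\sum_{l\geq 2k-1}\binom{l-k}{k-1}X^l,$$
after the substitution $j=l-2k+1$. Summing over $k\geq 1$ yields the second equality. No step presents any real obstacle; the only thing to check carefully is that Step~1 genuinely produces a basis (not merely a spanning set) so that the enumeration gives the dimension, which is exactly the content of Corollary~\ref{31}.
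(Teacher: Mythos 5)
Your proof is correct and follows essentially the same route as the paper: identify $\dim(\B_{n,k})$ with the count of admissible words of length $k$ and weight $n$ via Corollary \ref{31}, write the per-length generating function $\bigl(\tfrac{X^2}{1-X}\bigr)^{k-1}\tfrac{X}{1-X}$, sum the geometric series, and expand binomially. Your Step 4 just spells out what the paper dismisses as "an easy development in formal series," and the index shift $j=l-2k+1$ giving $\binom{l-k}{k-1}$ checks out.
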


\begin{proof} Note that $dim(\B_{n,k})$ is the number of words $a_1\ldots a_k$ of length $k$, such that $a_1,\ldots,a_{k-1}\geq 2$,
and $a_1+\ldots+a_k=n$. Hence:
$$G=\sum_{k=1}^\infty \left(\frac{X^2Y}{1-X}\right)^{k-1} \frac{XY}{1-X}
=\frac{XY}{1-X}\frac{1}{1-\frac{X^2Y}{1-X}}=\frac{XY}{1-X-X^2Y}.$$
An easy developement in formal series gives the second formula. \end{proof}

\subsection{An associative product on $\g_{\T(\N^*)}$}

We now define an associative product on $\g_{\T(\N^*)}$, in such a way that $\phi_{PL}$ becomes a morphism of Com-Prelie algebras.

\begin{prop}
We define a product $\shuffle$ on $\g_{\T(\N^*)}$ by:
$$B_p(s_1\ldots s_k) \shuffle B_q (t_1\ldots t_l)=\binom{p+q-k-l-2}{p-k-1} B_{p+q-1}(s_1\ldots s_k t_1\ldots t_l).$$
Then $\g_{\T(\N^*)}$ is a Com-Prelie algebra and $\phi_{PL}$ is a morphism of Com-Prelie algebras.
\end{prop}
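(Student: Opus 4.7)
The plan is to verify three things in turn: (a) commutativity and associativity of $\shuffle$, (b) the Com-Prelie compatibility axiom $(x\shuffle y)\bullet z = (x\bullet z)\shuffle y + x\shuffle(y\bullet z)$, and (c) the identity $\phi_{PL}(x\shuffle y) = \phi_{PL}(x)\shuffle\phi_{PL}(y)$. Since $\phi_{PL}$ is already a morphism of prelie algebras by construction, point (c) is all that remains for the morphism claim. By bilinearity, each identity only needs to be checked on single trees.

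For (a), commutativity follows from the symmetry $\binom{n}{j} = \binom{n}{n-j}$ applied with $n = p+q-k-l-2$ (so that $n-(p-k-1) = q-l-1$), together with the equality of rooted trees $B_{p+q-1}(s_1\ldots s_k t_1\ldots t_l) = B_{p+q-1}(t_1\ldots t_l s_1\ldots s_k)$. For associativity, both $(B_{p_1}(s)\shuffle B_{p_2}(t))\shuffle B_{p_3}(u)$ and $B_{p_1}(s)\shuffle(B_{p_2}(t)\shuffle B_{p_3}(u))$ produce a scalar multiple of the single tree $B_{p_1+p_2+p_3-2}(stu)$; setting $a_i := p_i-k_i-1$, the two scalars are $\binom{a_1+a_2}{a_1}\binom{a_1+a_2+a_3}{a_1+a_2}$ and $\binom{a_2+a_3}{a_2}\binom{a_1+a_2+a_3}{a_1}$, both equal to the trinomial $\frac{(a_1+a_2+a_3)!}{a_1!\,a_2!\,a_3!}$.

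For (b), take $x = B_p(s_1\ldots s_k)$, $y = B_q(t_1\ldots t_l)$ and $z$ a single tree. We expand $(x\shuffle y)\bullet z$ via $\tau\bullet z = \sum_{v\in V(\tau)}\tau\bullet_v z$, splitting the sum over vertices of $B_{p+q-1}(st)$ into three pieces according to whether $v$ is the root, a vertex of $s$, or a vertex of $t$; we expand $(x\bullet z)\shuffle y + x\shuffle(y\bullet z)$ identically. The $s$-vertex and $t$-vertex contributions match directly with coefficient $\binom{p+q-k-l-2}{p-k-1}$, since grafting inside $s$ or $t$ preserves the number $k$ or $l$ of children of the ambient root. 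The two root-contributions produce the same tree $B_{p+q-1}(stz)$ with $k+l+1$ children; the LHS gives coefficient $\binom{p+q-k-l-2}{p-k-1}$, while the RHS gives $\binom{p+q-k-l-3}{p-k-2}+\binom{p+q-k-l-3}{p-k-1}$, which is equal by Pascal's identity.

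For (c), Proposition \ref{28} expresses $\phi_{PL}(B_p(s_1\ldots s_k)) = x_0v_1\shuffle\cdots\shuffle x_0v_k\shuffle x_1^{p-1-k}$ with $v_i = \phi_{PL}(s_i)$ when $k<p$, and zero otherwise. The key step is then the classical shuffle identity $x_1^a\shuffle x_1^b = \binom{a+b}{a}x_1^{a+b}$ applied with $a = p-1-k$, $b = q-1-l$: this produces precisely the binomial coefficient $\binom{p+q-k-l-2}{p-k-1}$ in front of $x_1^{p+q-2-k-l}$, which matches both the scalar in the definition of $\shuffle$ on $\g_{\T(\N^*)}$ and the shape of $\phi_{PL}(B_{p+q-1}(st))$ (again via Proposition \ref{28}). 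The degenerate cases $k\geq p$ or $l\geq q$ collapse both sides to zero. The main obstacle will be the bookkeeping in (b), where grafting at the root of $B_{p+q-1}(st)$ must be recognised as arising from two different sources on the right-hand side carrying apparently different binomial coefficients, with Pascal's identity providing the clean reconciliation; the remaining steps reduce to standard multinomial and shuffle identities.
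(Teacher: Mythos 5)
Your proposal is correct and follows essentially the same route as the paper's own proof: the same binomial symmetry for commutativity, the same trinomial-coefficient computation for associativity, the same root-versus-interior-vertex split with Pascal's identity $\binom{p+q-k-l-3}{p-k-2}+\binom{p+q-k-l-3}{p-k-1}=\binom{p+q-k-l-2}{p-k-1}$ for the Com-Prelie axiom, and the same use of Proposition \ref{28} together with $x_1^{p-1-k}\shuffle x_1^{q-1-l}=\binom{p+q-k-l-2}{p-k-1}x_1^{p+q-k-l-2}$ for the morphism property. Nothing essential is missing.
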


\begin{proof}  As $\binom{p+q-k-l-2}{p-k-1}=\binom{p+q-k-l-2}{q-l-1}$, $\shuffle$ is commutative. 
Let $t=B_p( s_1\ldots s_k)$, $t'=B_q(\bullet t_1\ldots t_l)$ and $t''=B_r(u_1\ldots u_m)$. Then:
\begin{align*}
t\shuffle (t'\shuffle t'')=\underbrace{\binom{q+r-l-m-2}{q-l-1}\binom{p+q+r-k-l-m-3}{q+r-l-m-2}}_{A} B_{p+q+r-2}
(s_1\ldots s_kt_1\ldots t_lu_1\ldots u_m),\\
(t\shuffle t')\shuffle t''=\underbrace{\binom{p+q-k-l-2}{p-k-1}\binom{p+q+r-k-l-m-3}{p+q-k-l-2}}_{B} B_{p+q+r-2}
(s_1\ldots s_kt_1\ldots t_lu_1\ldots u_m).
\end{align*}
If $p\leq k$ or $q\leq l$ or $r\leq m$, then $A=B=0$. If $p>k$ and $q>l$ and $r>m$, then:
$$A=B=\frac{(p+q+r-k-l-m-3)!}{(p-k-1)!(q-l-1)!(r-m-1)!}.$$
So $\shuffle$ is associative. \\

Let $t_1=B_p( s_1\ldots s_k)$, $t_2=B_q(t_1\ldots t_l)$ and $t \in \T(\N^*)$. Then:
\begin{align*}
(t_1\shuffle t_2)\circ T&=\binom{p+q-k-l-2}{m-k-1}B_{p+q-1}(s_1\ldots s_kt_1\ldots t_lt)\\
&+\sum_{i=1}^k\binom{p+q-k-l-2}{p-k-1} B_{p+q-1}(s_1\ldots (s_i\bullet t) \ldots s_kt_1\ldots t_l)\\
&+\sum_{j=1}^l\binom{p+q-k-l-2}{p-k-1} B_{p+q-1}(s_1\ldots s_kt_1\ldots (t_j\bullet t) \ldots t_l),\\
(t_1\bullet t)\shuffle t_2&=\left(\sum_{i=1}^k B_p(s_1\ldots (s_i \bullet t)\ldots s_k)+B_p(s_1\ldots s_kt)\right)\shuffle t_2\\
&=\sum_{i=1}^k\binom{p+q-k-l-2}{p-k-1} B_{p+q-1}(s_1\ldots (s_i\bullet t) \ldots s_kt_1\ldots t_l)\\
&+\binom{p+q-k-l-3}{p-k-2}B_{p+q-1}(s_1\ldots s_kt_1\ldots t_lt),\\
t_1\shuffle (t_2\bullet t)&=t_1\shuffle \left(\sum_{j=1}^l B_q(t_1\ldots (t_j\bullet t)\ldots t_l)+B_q(t_1\ldots t_jt)\right)\\
&=\sum_{j=1}^l\binom{p+q-k-l-2}{p-k-1} B_{p+q-1}(s_1\ldots s_kt_1\ldots (t_j\bullet t) \ldots t_l)\\
&+\binom{p+q-k-l-3}{p-k-1}B_{p+q-1}(s_1\ldots s_kt_1\ldots t_lt).
\end{align*}
As $\displaystyle \binom{p+q-k-l-3}{p-k-2}+\binom{p+q-k-l-3}{p-k-1}=\binom{p+q-k-l-2}{p-k-1}$, 
we obtain $(t_1\shuffle t_2)\bullet t=(t_1\bullet t)\shuffle t_2+t_1\shuffle (t_2\bullet t)$. So $\g_{\T(\N^*)}$ is Com-Prelie.\\

Let $t_1=B_p(s_1\ldots s_k)$ and $t_2=B_q( t_1\ldots t_l)$. If $k\geq p$, then $\displaystyle \binom{p+q-k-l-2}{p-k-1}=0$,
so $t_1\shuffle t_2=0$. By proposition \ref{28}, $\phi_{PL}(t_1)=0$, so $\phi_{PL}(t_1\shuffle t_2)
=\phi_{PL}(t_1)\shuffle \phi_{PL}(t_2)=0$. Similarly, if $l\geq q$, $\phi_{PL}(t_1\shuffle t_2)=\phi_{PL}(t_1)
\shuffle \phi_{PL}(t_2)=0$. If $k<p$ and $l<q$, we put $w_i=\phi_{PL}(s_i)$ and $w'_j=\phi_{PL}(t_j)$. Then:
\begin{align*}
\phi_{PL}(t_1)\shuffle \phi_{PL}(t_2)&=x_0w_1\shuffle \ldots \shuffle x_0w_k \shuffle x_1^{p-1-k}
\shuffle x_0w'_1\shuffle \ldots \shuffle x_0w'_l \shuffle x_1^{q-1-l}\\
&=\binom{p+q-k-l-2}{p-k-1}x_0w_1\shuffle \ldots x_0w'_l\shuffle x_1^{p+q-k-l-2}\\
&=\binom{p+q-k-l-2}{p-k-1}\phi_{PL}(B_{p+q-1}(s_1\ldots s_k t_1\ldots t_l))\\
&=\phi_{PL}(t_1\shuffle t_2).
\end{align*}
So $\phi_{PL}$ is a Com-Prelie algebra morphism. \end{proof}\\

{\bf Remark.} By the proof of proposition \ref{28}, we have a commutative diagram of prelie algebra morphisms:
$$\xymatrix{\g_{\PT(\{1,2\}}\ar[r]^{\phi_{CPL}}&\B\\
\g_{\T(\N^*)}\ar[u]^{\psi} \ar[ru]_{\phi_{PL}}&}$$
Moreover, $\phi_{CPL}$ is a morphism of Com-Prelie algebra. With the commutative, associative product previously defined on $\g_{\T(\N^*)}$,
$\phi_{PL}$ is now a morphism of Com-Prelie algebra. However, $\psi$ is not compatible with $\shuffle$. Indeed,
$\psi(\tddeux{$2$}{$1$})=\psi(\tdun{$2$})\bullet \psi(\tdun{$1$})=\tddeux{$2$}{$1$}$, so:
$$\psi(\tddeux{$2$}{$1$})\shuffle \psi(\tddeux{$2$}{$1$})=\tddeux{$2$}{$1$}\shuffle \tddeux{$2$}{$1$}
=\hdquatretreize{$2$}{$1$}{$2$}{$1$}.$$
Moreover, $\tddeux{$2$}{$1$} \shuffle \tddeux{$2$}{$1$}=\tdtroisun{$3$}{$1$}{$1$}$, so:
$$\psi(\tddeux{$2$}{$1$} \shuffle \tddeux{$2$}{$1$})=\psi(\tdun{$3$})\bullet \psi(\tdun{$1$})\psi(\tdun{$1$})
=\frac{1}{2}\hddeux{$2$}{$2$} \bullet \tdun{$1$}\tdun{$1$}
=\hdquatretreize{$2$}{$1$}{$2$}{$1$}+\hdquatresept{$2$}{$1$}{$1$}{$2$}.$$

\section{Appendix}

\subsection{Enumeration of partitioned trees}

Let $d \geq 1$. For all $n \geq 1$, let $f_n$ be the number of partitioned trees decorated by $\{1,\ldots,d\}$ with $n$ vertices
and let $t_n$ be the number of partitioned trees decorated by $\{1,\ldots,d\}$ with $n$ vertices and one root. By convention, $f_0=1$. We put:
$$T=\sum_{n=1}^\infty t_n X^n, \: F=\sum_{n=0}^\infty f_n X^n.$$
Let $V_T$ be the vector space generated by the set of partitioned trees decorated by $\{1,,\ldots,d\}$
and $V_F$ be the vector space generated by the set of partitioned trees decorated by $\{1,,\ldots,d\}$ with only one root.
There is a bijection:
$$\begin{cases}
S(V_T)&\longrightarrow V_F\\
t_1\ldots t_k&\longrightarrow t_1\shuffle \ldots \shuffle t_k.
\end{cases}$$
Hence:
\begin{equation}
\label{E2}F=\prod_{i=1}^\infty \frac{1}{(1-X^k)^{t_k}}.
\end{equation}
There is a bijection:
$$\begin{cases}
\displaystyle \bigoplus_{i=1}^dS(V_F)&\longrightarrow V_T\\
(F_{1,1}\ldots,F_{1,k_1},\ldots,F_{d,1}\ldots F_{d,k_d})&\longrightarrow
\displaystyle \sum_{i=1}^d \tdun{$i$}\bullet(F_{i,1}\ldots F_{i,k_i}).
\end{cases}$$
This gives:
\begin{equation}
\label{E3}T=dX \prod_{i=1}^\infty \frac{1}{(1-X^k)^{f_{k-1}}}.
\end{equation}
Formulas (\ref{E2}) and (\ref{E3}) allow to compute inductively $f_k$ and $t_k$ for all $k\geq 1$. This gives:
$$\begin{cases}
f_1&=\displaystyle d\\
f_2&=\displaystyle \frac{d(3d+1)}{2}\\
f_3&=\displaystyle \frac{d(19d^2+9d+2)}{6}\\
f_4&=\displaystyle \frac{d(63d^2+34d^2+13d+2)}{8}\\
f_5&=\displaystyle \frac{d(644d^4+400d^3+175d^2+35d+6)}{30}
\end{cases}$$
Here are examples of $f_n$ for $d=1$ or $2$:
$$\begin{array}{|c|c|c|c|c|c|c|c|c|c|c|}
\hline n&1&2&3&4&5&6&7&8&9&10\\
\hline d=1&1&2&5&14&42&134&444&1518&5318&18989\\
\hline d=2&2&7&32&167&952&5759&36340&236498&1576156&10702333\\
\hline\end{array}$$
The row $d=1$ is sequence $A035052$ of \cite{Sloane}.

\subsection{Study of the dendriform structure on admissible words}

\label{s52} We here study the dendriform algebra $\K\langle\N^*\rangle$ of proposition \ref{32}. 
It is clearly commutative, via the bijection from $Sh_\prec(k,l)$ to $Sh_\succ(l,k)$ given by the composition (on the left) by the permutation
$(l+1\ldots l+k\: 1\ldots l)$: in other terms, it is a Zinbiel algebra \cite{Loday3}. 

Let $V$ be a vector space. The shuffle dendriform algebra $Sh(V)$ is $T_+(V)$, with the products given by:
\begin{eqnarray*}
(a_1\ldots a_k)\prec(b_1\ldots b_l)&=&\sum_{\zeta \in Sh_\prec(k,l)}\zeta.a_1\ldots a_k b_1\ldots b_{k-1}b_k\\
(a_1\ldots a_k)\succ(b_1\ldots b_l)&=&\sum_{\zeta \in Sh_\succ(k,l)}\zeta.a_1\ldots a_{k-1}a_k b_1\ldots b_k.
\end{eqnarray*}
Moreover, this is the free commutative dendriform algebra generated by $V$, that is to say if $A$ is a commutative dendriform algebra and
$f:V\longrightarrow A$ is any linear map, there exists a morphism of dendriform algebras $\phi:Sh(V)\longrightarrow A$ such that $\phi_\mid V=f$.
As $a_1\ldots a_k \succ b=a_1\ldots a_k b$ in $Sh(V)$ for all $a_1,\ldots,a_k,b \in V$, this morphism $\phi$ is defined by:
$$\phi(a_1\ldots a_k)=(\ldots(a_1\succ a_2)\succ a_3)\ldots)\succ a_k.$$

\begin{prop}\begin{enumerate}
\item Let $V$ be the space generated by the words $1^k i$, $k\in \N$, $i\geq 1$. Then  $K \langle\N^*\rangle$ is isomorphic, as a dendriform algebra, 
to $Sh(V)$.
\item Let $A$ be the subspace of  $K \langle\N^*\rangle$ generated by admissible words. Then it is a dendriform subalgebra of  $K \langle\N^*\rangle$.
Moreover, if $W$ is the space generated by the letters $i$, $i\geq 1$, then $A$ is isomorphic, as a dendriform algebra, to $Sh(W)$.
\end{enumerate}\end{prop}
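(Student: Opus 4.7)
Both parts proceed by the same template: use the universal property of the free commutative dendriform (Zinbiel) algebra $Sh(\cdot)$ to define the candidate morphism, then verify it is bijective.

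\emph{Part 2.} First I check that the subspace $A$ of admissible words is closed under $\prec$ and $\succ$: if $a=a_1\cdots a_k$ and $b=b_1\cdots b_l$ are admissible, then every word appearing in $a\prec b$ has the letters $\{a_1,\ldots,a_{k-1},b_1,\ldots,b_{l-1},b_l+1\}$ (all $\geq 2$) shuffled in its first $k+l-1$ positions, with $a_k$ at the last position; the case of $a\succ b$ is analogous. By the universal property of $Sh(W)$, the linear map $e_i\mapsto i$ (viewed as the single-letter word) extends uniquely to a dendriform morphism $\phi:Sh(W)\to A$. Using $Sh_\succ(k,1)=\{\mathrm{id}\}$, a straightforward induction on $n$ shows
\[ \phi(e_{i_1}\otimes\cdots\otimes e_{i_n})=(i_1+1)(i_2+1)\cdots(i_{n-1}+1)\,i_n, \]
which is a bijection between the tensor basis of $Sh(W)$ and the basis of admissible words via $a_j=i_j+1$ for $j<n$ and $a_n=i_n$.

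\emph{Part 1.} The same construction produces $\Phi:Sh(V)\to\K\langle\N^*\rangle$ extending $V\hookrightarrow\K\langle\N^*\rangle$. The key auxiliary tool is the vector-space bijection $\mu:\K\langle\N^*\rangle\to Sh(V)$ sending a word $w=a_1\cdots a_n$ to its unique $V$-block decomposition $v'_1\otimes\cdots\otimes v'_m$, obtained by cutting after every non-$1$ letter at positions $1,\ldots,n-1$; each block takes the form $v'_j=1^{k'_j}i'_j$ with $i'_j\geq 2$ for $j<m$ and $i'_m\geq 1$. Both $\Phi$ and $\mu$ respect the bigrading (length $n$, block-count $m$); refining by weight $W$, the morphism $\Phi$ maps $(Sh(V))_{n,W,m}$ into $(\K\langle\N^*\rangle)_{n,W+m-1,m}$ (each application of $\succ$ increments one letter), and an elementary count shows that both pieces have the common finite dimension $\binom{n-1}{m-1}\binom{W-n+m-1}{m-1}$.

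To conclude that $\Phi$ is bijective I establish the combinatorial lemma that for $T=v_1\otimes\cdots\otimes v_m$ with $v_j=1^{k_j}i_j$,
\[ \Phi(T)=1^{k_1}(i_1+1)\,1^{k_2}(i_2+1)\cdots 1^{k_{m-1}}(i_{m-1}+1)\,1^{k_m}i_m+R(T), \]
where $R(T)$ is a sum of words whose $V$-decompositions have the same $i'$-coordinates ($i'_j=i_j+1$ for $j<m$ and $i'_m=i_m$) but strictly larger partial-sum vector $(k'_1,\,k'_1+k'_2,\,\ldots)$ in the lexicographic order. The base case $m=2$ is the explicit formula $v_1\succ v_2=\sum_{r=0}^{k_2}1^{k_1+r}(i_1+1)\,1^{k_2-r}i_2$, and the inductive step on $m$ is driven by the dendriform axiom $(a\succ b)\succ c=a\succ(b\star c)$, which propagates the leading-term structure. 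Granted the lemma, $\mu\circ\Phi$ is upper-triangular with invertible diagonal on each finite-dimensional piece, hence bijective, and so $\Phi$ is a dendriform algebra isomorphism. The main obstacle lies precisely in this lemma: tracking the shuffle structure of the iterated $\succ$-products of $V$-elements across the $m-1$ successive non-trivial applications, and verifying that every defect term strictly raises the partial-sum vector.
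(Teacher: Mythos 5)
Your argument is essentially the paper's: both invoke the universal property of the free commutative dendriform algebra $Sh(\cdot)$ and then prove bijectivity by a triangularity argument with respect to an order on the positions of the non-$1$ letters (your lexicographic order on the partial sums $(k'_1,k'_1+k'_2,\ldots)$ is just a repackaging of the paper's statistic $o(w)$), with part~2 following from the computation of $\phi$ on one-letter blocks. The only slip is your base-case formula: in fact $v_1\succ v_2=\sum_{r=0}^{k_2}\binom{k_1+r}{r}\,1^{k_1+r}(i_1+1)1^{k_2-r}i_2$, since distinct shuffles can yield the same word; this does not affect the leading term (coefficient $1$ at $r=0$) and hence leaves the triangularity argument intact.
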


\begin{proof}
Let $w=a_1\ldots a_k$ be a word with letters in $\N^*$. We denote by $o(w)$ the sequence of indices $j\in \{1,\ldots,k-1\}$ such that $a_j\neq 1$.
This sequences are totally ordered in this way: $(j_1,\ldots,j_k)<(j'_1,\ldots,j'_l)$ if there exists a $p$ such that $j_k=j'_l$, $j_{k-1}=j'_{l-1}$,
$\ldots$, $j_{k-p+1}=j'_{l-p+1}$, $j_{k-p}<j'_{l-p}$, with the convention $j_0=j_{-1}=\ldots=j'_0=j'_{-1}=\ldots=0$. \\

Let $\phi:Sh(V)\longrightarrow K \langle\N^*\rangle$ be the unique morphism of dendriform algebras which extends the identity of $V$. Then:
\begin{eqnarray*}
\phi((1^{k_1-1}a_1)\ldots (1^{k_n-1}a_n))&=&=1^{k_1-1}(a_1+1)\ldots 1^{k_{n-1}-1}(a_{n-1}+1)1^{k_n-1} a_n\\
&&+\mbox{words $w'$ such that $o(w')>(k_1,\ldots,k_{n-1})$}.
\end{eqnarray*}
By triangularity, $\phi$ is an isomorphism. Moreover, for all $a_1,\ldots,a_n \geq 1$:
$$\phi(a_1\ldots a_n)=(a_1+1)\ldots(a_{n-1}+1)a_n.$$
Consequently, $\phi(Sh(W))=A$, so $A$ is a dendriform subalgebra of $K\langle\N^*\rangle$ and is isomorphic to $Sh(W)$. \end{proof}

\subsection{Freeness of the pre-Lie algebra $\g_{\PT(\D)}$}

{\bf Notations.} Let $k\geq 1$, $d_1,\ldots,d_k \in \D$ and let $F_1,\ldots, F_k$ be decorated partitioned forests. We put:
$$B_{d_1,\ldots,d_k}(F_1,\ldots F_k)=(\tdun{$d_1$}\bullet F_1) \shuffle\ldots \shuffle (\tdun{$d_k$}\bullet F_k).$$
Note that any partitioned tree can be written under the form $B_{d_1,\ldots,d_k}(F_1,\ldots F_k)$. This writing is unique up to a common permutation
of the $d_i$'s and the $F_i$'s. 

\begin{prop}
We define a coproduct $\delta$ on $\g_{\PT(\D)}$ in the following way: 
for any decorated partitioned tree $t=B_{d_1,\ldots,d_k}(t_{1,1}\ldots t_{1,n_1},\ldots, t_{k,1}\ldots t_{k,n_k})$,
$$\delta(t)=\frac{1}{k} \sum_{i=1}^k  \sum_{j=1}^{n_i}B_{d_1,\ldots,d_k}(t_{1,1}\ldots t_{1,n_1},
\ldots,t_{i,1}\ldots t_{i,j-1}t_{i,j+1}\ldots t_{i,n_i},\ldots,t_{k,1}\ldots t_{k,n_k})\otimes t_{i,j}.$$
\begin{enumerate}
\item For all $x \in \g_{\PT(\D)}$, $(\delta \otimes Id)\circ \delta(x)=(23)(\delta \otimes Id)\circ \delta(x)$.
\item For all $x,y\in \g_{\PT(\D)}$, $\delta(x \bullet y)=x \otimes y+\delta(x) \bullet y$.
\end{enumerate}\end{prop}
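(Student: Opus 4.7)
The plan is to reduce to the case of a single partitioned tree by bilinearity. Writing $x = B_{d_1,\ldots,d_k}(F_1,\ldots,F_k)$ with $F_i = t_{i,1}\cdots t_{i,n_i}$, and denoting by $x\setminus t_{i,j}$ the partitioned tree obtained by deleting the branch $t_{i,j}$ from $F_i$, the coproduct reads
$$\delta(x) = \frac{1}{k}\sum_{i=1}^{k}\sum_{j=1}^{n_i}(x\setminus t_{i,j})\otimes t_{i,j}.$$
The key observation is that $x\setminus t_{i,j}$ still has exactly $k$ roots (those of $x$), so $\delta$ can be iterated with the same prefactor $\frac{1}{k}$.

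For point 1, I would simply iterate. Applying $\delta\otimes\mathrm{Id}$ to $\delta(x)$ gives
$$(\delta\otimes\mathrm{Id})\circ\delta(x) = \frac{1}{k^{2}}\sum_{(i,j)\neq (i',j')}(x\setminus\{t_{i,j},t_{i',j'}\})\otimes t_{i',j'}\otimes t_{i,j},$$
where $(i,j)$ and $(i',j')$ range over all pairs of distinct branches of $x$. Relabeling $(i,j)\leftrightarrow(i',j')$ in the summation exchanges the second and third tensor factors, so the $(23)$-symmetry is immediate.

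For point 2, I would expand $x\bullet y=\sum_{s\in V(x)} x\bullet_s y$ and then classify the terms of $\delta(x\bullet y)=\frac{1}{k}\sum_s\sum_T ((x\bullet_s y)\setminus T)\otimes T$ according to (a) whether $s$ is a root of $x$ or an interior vertex of some $t_{i,j}$, and (b) whether the extracted branch $T$ is the freshly grafted $y$, an untouched $t_{i',j'}$, or the modified $t_{i,j}\bullet_s y$. Four cases arise. First, \emph{$s$ a root and $T=y$} contributes $\sum_{i=1}^{k}x\otimes y = k\,(x\otimes y)$, yielding $x\otimes y$ after the prefactor. Second and third, the two cases with $T=t_{i',j'}$ an untouched branch recombine: the sum over admissible grafting vertices $s$ (any root of $x$, plus any interior vertex not in $t_{i',j'}$) reconstitutes the full pre-Lie product on $x\setminus t_{i',j'}$, giving $\frac{1}{k}\sum_{(i',j')}((x\setminus t_{i',j'})\bullet y)\otimes t_{i',j'}$. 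Fourth, \emph{$s\in V(t_{i,j})$ and $T=t_{i,j}\bullet_s y$} yields $\frac{1}{k}\sum_{(i,j)}(x\setminus t_{i,j})\otimes(t_{i,j}\bullet y)$ after summing over $s$. Together the last two pieces coincide with $\delta(x)\bullet y$ when $\bullet y$ is extended to $\g_{\PT(\D)}^{\otimes 2}$ by the Leibniz rule $(a\otimes b)\bullet y = (a\bullet y)\otimes b + a\otimes(b\bullet y)$.

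The main obstacle is the bookkeeping in point 2: one must keep the location of the grafting vertex $s$ and the identity of the extracted branch $T$ strictly separate, so that the case $s\in V(t_{i,j})$ with $T=t_{i,j}\bullet_s y$ is not confused with the case $T=t_{i',j'}$ an untouched branch indexed by some $(i',j')\neq(i,j)$. Once this case split is pinned down, each of the four contributions follows by a direct calculation from the definitions of $\bullet$, the graftings $\bullet_s$, and $\delta$.
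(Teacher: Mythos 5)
Your proposal is correct and follows essentially the same route as the paper: the same notation $x\setminus t_{i,j}$ (the paper's $t/t_{i,j}$), the same symmetry-by-relabelling argument for the co-pre-Lie relation, and the same four-way case split (grafting vertex a root or interior, extracted branch new, untouched, or modified) for $\delta(x\bullet y)$, with the extension of $\bullet\, y$ to the tensor square by the Leibniz rule. Your explicit remarks that $x\setminus t_{i,j}$ keeps the same $k$ roots (justifying the $1/k^2$) and that the untouched-branch terms reconstitute the full pre-Lie product on $x\setminus t_{i',j'}$ are exactly the points the paper uses implicitly.
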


\begin{proof} 1. Let $t=B_{d_1,\ldots,d_k}(t_{1,1}\ldots t_{1,n_1},\ldots, t_{k,1}\ldots t_{k,n_k})$.
For all $i,j$, we put:
$$t/t_{i,j}= B_{d_1,\ldots,d_k}(t_{1,1}\ldots t_{1,n_1},
\ldots,t_{i,1}\ldots t_{i,j-1}t_{i,j+1}\ldots t_{i,n_i},\ldots,t_{k,1}\ldots t_{k,n_k}).$$
Then:
$$\delta(t)=\frac{1}{k}\sum_{i,j} t/t_{i,j}\otimes t_{i,j}.$$
Hence:
$$(\delta \otimes Id)\circ \delta(t)=\sum_{(i,j)\neq (i',j')} (t/t_{i,j})/t_{i',j'}\otimes t_{i',j'} \otimes t_{i,j}$$
As $(t/t_{i,j})/t_{i',j'}$ and $(t/t_{i',j'})/t_{i,j}$ are both the partitioned tree obtained by cutting $t_{i,j}$ and $t_{i',j'}$ in $t$, 
they are equal, so $(\delta \otimes Id)\circ \delta(t)$ is invariant under the action of $(23)$. \\

2. Let $t'$ be a decorated partitioned tree. 
\begin{eqnarray*}
\delta(t\bullet t')&=&\sum_{i=1}^k \delta(B_{d_1,\ldots,d_k}(t_{1,1}\ldots t_{1,n_1},\ldots,
t_{i,1}\ldots t_{i,n_i}t',\ldots,t_{k,1}\ldots t_{k,n_k}))\\
&&+\sum_{i,j} \delta(B_{d_1,\ldots,d_k}(t_{1,1}\ldots t_{1,n_1},\ldots,
t_{i,1}\ldots t_{i,j}\bullet t' \ldots t_{i,n_i},\ldots,t_{k,1}\ldots t_{k,n_k}))\\
&=&\frac{1}{k} kt\otimes t'+\frac{1}{k}\sum_i\sum_{i',j'} B_{d_1,\ldots,d_k}(t_{1,1}\ldots t_{1,n_1},\ldots,
t_{i,1}\ldots t_{i,n_i}t',\ldots,t_{k,1}\ldots t_{k,n_k})/t_{i',j'}\otimes t_{i',j'} \\
&&+\frac{1}{k}\sum_{(i,j) \neq (i',j')}B_{d_1,\ldots,d_k}(t_{1,1}\ldots t_{1,n_1},\ldots,
t_{i,1}\ldots t_{i,j}\bullet t' \ldots t_{i,n_i},\ldots,t_{k,1}\ldots t_{k,n_k})/t_{i',j'}\otimes t_{i',j'}\\
&&+\frac{1}{k}\sum_{i,j}t/t_{i,k}\otimes  t_{i,j} \bullet t' \\
&=&t\otimes t'+\sum t^{(1)} \otimes t^{(2)} \bullet t'+\sum t^{(1)}\otimes t^{(2)}\bullet t'.
\end{eqnarray*}
So $\delta(t \bullet t')=t\otimes t'+\delta(t) \bullet t'$. \end{proof} \\

By Livernet's pre-Lie rigidity theorem \cite{Livernet}:

\begin{cor}
The pre-Lie algebra $\g_{\PT(\D)}$ is freely generated by $Ker(\delta)$.
\end{cor}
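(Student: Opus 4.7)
The plan is to apply Livernet's pre-Lie rigidity theorem \cite{Livernet}, whose hypotheses are exactly the two conclusions of the proposition immediately preceding this corollary. Livernet's theorem asserts that a connected, graded pre-Lie bialgebra $(H,\bullet,\delta)$ in which $\delta$ is pre-Lie coassociative (i.e.\ $(23)\cdot(\delta\otimes \mathrm{Id})\circ \delta=(\delta\otimes \mathrm{Id})\circ \delta$) and satisfies the compatibility $\delta(x\bullet y)=x\otimes y+\delta(x)\bullet y$ is free as a pre-Lie algebra, generated by $\ker(\delta)$.

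First I would fix a grading on $\g_{\PT(\D)}$ by the total number of vertices, so that a partitioned tree with $n$ vertices lies in degree $n$; this grading starts in degree $1$, which is the connectedness required by Livernet. Under this grading, the pre-Lie product $\bullet$ is homogeneous of degree $0$ (its effect on vertex counts is additive), and the coproduct $\delta$ is also homogeneous: the formula
\[
\delta(t)=\frac{1}{k}\sum_{i,j}\, t/t_{i,j}\otimes t_{i,j}
\]
sends the degree-$n$ piece into $\bigoplus_{p+q=n,\,p,q\geq 1}\g_{\PT(\D)}_p\otimes \g_{\PT(\D)}_q$, since removing a nonempty subtree $t_{i,j}$ leaves a nonempty complement.

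Second, I would simply read off that both axioms of Livernet's theorem are established by the preceding proposition: point 1 is the pre-Lie coassociativity of $\delta$, and point 2 is the required cocycle-type compatibility $\delta(x\bullet y)=x\otimes y+\delta(x)\bullet y$. Applying the theorem then yields the desired conclusion that $\g_{\PT(\D)}$ is free as a pre-Lie algebra on $\ker(\delta)$.

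There is in truth no substantial obstacle remaining: the conceptual work has been done in verifying the two compatibilities in the preceding proposition, and the remaining step is essentially bookkeeping around the grading. The main thing to double-check is that no generator is lost, i.e.\ that the primitives (elements with $\delta(x)=0$) correctly play the role of free generators in Livernet's statement, which is automatic once the graded-connected hypothesis is secured.
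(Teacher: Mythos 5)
Your proof is correct and follows exactly the paper's route: the corollary is obtained by citing Livernet's rigidity theorem, whose hypotheses are precisely the two points of the preceding proposition. Your additional remarks on the vertex-count grading and connectedness only make explicit what the paper leaves implicit.
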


{\bf Remarks.} \begin{enumerate}
\item  It is not difficult to prove that for any $x,y \in \g_{\PT(\D)}$:
$$\delta(x \shuffle y)=\sum x^{(1)}\otimes x^{(2)}\shuffle y+\sum y^{(1)} \otimes x \shuffle y^{(2)}.$$
Hence, $Ker(\delta)$ is an algebra for the product $\shuffle$.
\item Here are elements of $Ker(\delta)$ in the non decorated case. Let $t_1,t_2,t_3,t_4$ be partitioned trees.
\begin{eqnarray*}
X&=&B(t_1t_2,1)-B(t_1,t_2),\\
Y&=&B(t_1t_2t_3,1,1)-B(t_1t_2,t_3,1)-B(t_1t_3,t_2,1)-B(t_2t_3,t_1,1)+2B(t_1,t_2,t_3),\\
Z&=&B(t_1t_2t_3t_4,1)-B(t_1t_2t_3,t_4)-B(t_1t_2t_4,t_3)-B(t_1t_3t_4,t_2)-B(t_2t_3t_4,t_1)\\
&&+B(t_1t_2,t_3t_4)+B(t_1t_3,t_2t_4)+b(t_1t_4,t_2t_3),\\
T&=&B(t_1t_2,t_3t_4,1,1)+B(t_1t_3,t_2t_4,1,1)+B(t_1t_4,t_2t_3,1,1)-B(t_1t_2,t_3,t_4,1)\\
&&-B(t_1t_3,t_2,t_4,1)-B(t_1t_4,t_2,t_3,1)-B(t_2t_3,t_1,t_4,1)-B(t_2t_4,t_1,t_3,1)\\
&&-B(t_3t_4,t_1,t_2,1)+3B(t_1,t_2,t_3,t_4).
\end{eqnarray*}

\end{enumerate}

\bibliographystyle{amsplain}
\bibliography{biblio}

\end{document}